\newtheorem{theorem}{Theorem}
\numberwithin{theorem}{section}
\theoremstyle{plain}
\newtheorem{definition}[theorem]{Definition}
\newtheorem*{acknowledgement}{Acknowledgement}
\newtheorem*{conjecture}{Conjecture}
\newtheorem{lemma}[theorem]{Lemma}
\newtheorem*{fact}{Lemma}
\newtheorem{proposition}[theorem]{Proposition}
\newtheorem{corollary}[theorem]{Corollary}
\newtheorem*{Dedication}{Dedication}
\numberwithin{equation}{section}
\theoremstyle{remark}
\newtheorem{remark}[theorem]{Remark}
\newtheorem{keypoint}[theorem]{Key Point}
\theoremstyle{definition}
\newtheorem{example}[theorem]{Example}
\newtheorem*{dangerousbend}{Dangerous Bend}
\begin{document}
\title[Geometric and analytic structures]{Geometric and analytic structures\linebreak on the higher ad\`{e}les}
\dedicatory{To Professor Fedor Bogomolov on the occasion of his 70th birthday}

\author{O. Braunling,\quad M. Groechenig,\quad J. Wolfson}

\address{Department of Mathematics, Universit\"{a}t Freiburg, Germany}
\email{oliver.braeunling@math.uni-freiburg.de}
\address{Department of Mathematics, Imperial College London, UK}
\email{m.groechenig@imperial.ac.uk}
\address{Department of Mathematics, University of Chicago, USA}
\email{wolfson@math.uchicago.edu}

\thanks{O.B.\ was supported by GK 1821 \textquotedblleft Cohomological Methods in
Geometry\textquotedblright. M.G.\ was partially supported by EPSRC Grant
No.\ EP/G06170X/1. J.W.\ was partially supported by an NSF Post-doctoral
Research Fellowship under Grant No.\ DMS-1400349. Our research was supported
in part by EPSRC Mathematics Platform grant EP/I019111/1.{\newline \newline The final version was published in: Braunling, O., Groechenig, M. and Wolfson, J. Res Math Sci (2016) 3: 22. https://doi.org/10.1186/s40687-016-0064-y; Special Collection in Celebration of the Research of Fedor Bogomolov on the Occasion of his 70th Birthday}}

\begin{abstract}
The ad\`{e}les of a scheme have local components $-$ these are topological
higher local fields. The topology plays a large role since Yekutieli showed in
1992 that there can be an abundance of inequivalent topologies on a higher
local field and no canonical way to pick one. Using the datum of a topology,
one can isolate a special class of continuous endomorphisms. Quite
differently, one can bypass topology entirely and single out special
endomorphisms (global Beilinson-Tate operators) from the geometry of the
scheme. Yekutieli's \textquotedblleft Conjecture 0.12\textquotedblright%
\ proposes that these two notions agree. We prove this.

\end{abstract}
\maketitle

The ad\`{e}les of a scheme $X$ \cite{MR565095} generalize the classical
ad\`{e}les of Chevalley and Weil. The counterpart of a prime/finite place is a
saturated flag of scheme points%
\[
\triangle:=(\eta_{0}>\cdots>\eta_{n})\qquad\eta_{i}\in X
\]
with $\eta_{i+1}$ a codimension one point of $\overline{\{\eta_{i}\}}$. The
counterpart of the local field at a prime becomes a higher local field $K $,
see Theorem \ref{intro_thm_1} below. Suppose $X$ is of finite type over a
field $k$. In dimension one, the classical case, a local field has a canonical
topology and thus comes with a canonical algebra of continuous $k$-linear
endomorphisms, call it $E_{K}$. Sadly, this collapses dramatically for
$\dim(X)\geq2$: The ad\`{e}les induce a topology on the higher local fields
$K$. But as was discovered by Yekutieli \cite{MR1213064} in 1992, this
topology is an additional datum. It cannot be recovered from knowing $K$
solely as a field. However, even if we know this topology, $K$ is no longer a
topological field or ring. So it becomes quite unclear how to define the
continuous endomorphism algebra $E_{K}$ for $\dim(X)\geq2$. Approaches are:

\begin{enumerate}
\item \textit{(\textquotedblleft Global BT operators\textquotedblright)}
Beilinson defines $E_{\triangle}^{\operatorname*{Beil}}$ using a flag
$\triangle$ in the scheme.

\item \textit{(\textquotedblleft Local BT operators\textquotedblright)}
Yekutieli defines $E_{K}^{\operatorname*{Yek}}$ for a topological higher local
field $K$.

\item \textit{(\textquotedblleft}$n$-\textit{Tate objects\textquotedblright)
}Ad\`{e}les can be viewed as an $n$-Tate object \cite{bgwTateModule}, and let
$E_{\triangle}^{\operatorname*{Tate}}$ be its endomorphism algebra in this category.
\end{enumerate}

Yekutieli has shown that if $k$ is perfect, a flag $\triangle$ as in (1) also
induces a topological higher local field structure, as in (2). So while a
priori different, this suggests the following

\begin{conjecture}
[A. Yekutieli]\footnote{\textquotedblleft Conjecture 0.12\textquotedblright%
\ of \cite{MR3317764}}Let $k$ be a perfect field. Suppose $X/k$ is a finite
type $k$-scheme of pure dimension $n$ and $\triangle:=(\eta_{0}>\cdots
>\eta_{n})$ a saturated flag of points. Then there is a canonical isomorphism%
\[
E^{\operatorname*{Yek}}\cong E^{\operatorname*{Beil}}\text{.}%
\]

\end{conjecture}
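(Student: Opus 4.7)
The natural strategy is to interpose the third algebra $E^{\operatorname{Tate}}$ from the excerpt as a bridge, i.e.\ to prove the conjecture by exhibiting canonical isomorphisms
\[
E^{\operatorname{Beil}} \;\cong\; E^{\operatorname{Tate}} \;\cong\; E^{\operatorname{Yek}}.
\]
The advantage of routing through the $n$-Tate category is that \emph{both} sides are extra structures layered on top of the same underlying $k$-vector space $K$, and each is a condition about preserving a filtration by lattices. The Tate category is precisely the intrinsic home of such lattice data, so each identification becomes a matter of showing that the relevant lattices in $K$ on either side arise from, and are controlled by, the single $n$-Tate object associated to $\triangle$ in \cite{bgwTateModule}.

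My first step is to unpack Beilinson's definition and show $E^{\operatorname{Beil}} \cong E^{\operatorname{Tate}}$. Beilinson's global BT operators are defined by induction on $n$ using the adelic/lattice presentation of the flag, and this induction is structurally the same as the inductive definition of $n$-Tate objects: at each step one passes from an $(n{-}1)$-Tate object to an Ind-Pro system, and the bounded operators are exactly those that extend to endomorphisms of this Ind-Pro system. I expect this identification to be essentially formal after setting up the correct dictionary; the main care is in checking that Beilinson's commutator/trace-friendly formulation of ``bounded'' matches the categorical notion of a morphism in the Tate category, and that the comparison is independent of the choices made in presenting the Tate object.

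The second step, $E^{\operatorname{Yek}} \cong E^{\operatorname{Tate}}$, is the heart of the matter and the place where the perfectness hypothesis on $k$ is used. Yekutieli's operators are defined in terms of his \emph{topology} on $K$: bounded subsets, open subsets, and the lattice $\operatorname{Lat}(K)$ of balanced additive subgroups. On the other hand, the $n$-Tate object structure on $K$ produces its own family of lattices (the Pro-subobjects at each level of the Ind-Pro presentation). I would prove the identification in two sub-steps: first show that the Tate-categorical lattices are cofinal in Yekutieli's lattices in both the ``bounded'' and ``open'' directions, which identifies the two filtered systems, and second deduce that an endomorphism is Yekutieli-continuous-and-bounded if and only if it is a morphism in the Tate category. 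The cofinality result reduces, by induction on $n$, to the statement that for a complete discrete valuation field with perfect residue field the Tate lattices $t^{m}\mathcal{O}$ are cofinal among Yekutieli's lattices; here one invokes Cohen structure and the classification of higher local fields over perfect $k$, which is exactly where perfectness is essential.

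The main obstacle, as indicated, is this cofinality/comparison of lattice systems in the second step. Once it is established, the conjecture follows by composing the two canonical isomorphisms, and one should verify that the resulting composite is compatible with the identifications Yekutieli himself constructs between flag adèles and topological higher local fields, so that ``canonical'' really means canonical in the sense of his Conjecture 0.12.
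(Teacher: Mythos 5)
Your overall route is legitimate: the paper itself notes that combining $E^{\operatorname*{Beil}}\cong E^{\operatorname*{Tate}}$ (quoted from \cite[Theorem 5]{bgwTateModule}) with the stand-alone comparison $E^{\operatorname*{Yek}}\cong E^{\operatorname*{Tate}}$ of \S \ref{section_StandAloneHLFs} gives a second proof of the conjecture. Note, however, that the paper's primary proof (Theorem \ref{Thm_ComparisonOfCubicalAlgebras}(2)) does not pass through $E^{\operatorname*{Tate}}$ at all for the Beilinson--Yekutieli comparison: it observes that both algebras are cut out of $\operatorname*{Hom}_{k}(eM_{1\triangle},eM_{2\triangle})$ by formally identical conditions, stated for Beilinson resp.\ Yekutieli lattices, and then concludes equality from the sandwich property of Proposition \ref{Prop_RelationOfLattices}.

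The genuine gap in your plan is that you treat the matching of the two structures on $K$ as a routine verification, when it is the actual heart of the matter. First, $E^{\operatorname*{Yek}}$ is not defined by the topology alone: Definition \ref{Def_YekBeilCubAlgAndYekLattices} is recursive and uses a system of liftings $\sigma$ to read the quotients $L_{1}/L_{1}^{\prime}$ as $k_{1}$-vector spaces, and independence of $\sigma$ is Yekutieli's nontrivial Theorem \ref{Thm_YekIndependenceOfSystemOfLiftings}; your inductive comparison of the reduced maps with morphisms of $(n-1)$-Tate objects does not even make sense until a $\sigma$ compatible with the chosen Tate presentation is fixed. Second, and more seriously, the $n$-Tate structure entering $E^{\operatorname*{Beil}}\cong E^{\operatorname*{Tate}}$ is the adelic one (via Beilinson lattices, Corollary \ref{cor_Obvious}), whereas Yekutieli's lattices live on the TLF structure; a general field isomorphism $K\simeq\kappa((t_{1}))\cdots((t_{n}))$ preserves neither the ST topology (Example \ref{example_Yekutieli}) nor the Tate structure (Example \ref{example_YekutieliIndPro}), so your claimed reduction of the cofinality to the statement that $t^{m}\mathcal{O}$ is cofinal among Yekutieli lattices of a complete discrete valuation field is only the easy, stand-alone part (Lemma \ref{lemma:Ocfin}). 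What is really needed is the strengthened structure theorem (Theorem \ref{Thm_COA_StructureTheorem3}, Corollary \ref{cor_ToStructTheorem3}): one must construct, by an induction that carries along ring, $k$-algebra, $n$-Tate and ST-module structures simultaneously (using Cohen coefficient fields chosen as $k$-subalgebras, which is where perfectness enters), an isomorphism under which Beilinson lattices are final and co-final among Tate and Yekutieli lattices; this finality uses the scheme-theoretic input (normalization, excellence) of Proposition \ref{Prop_RelationOfLattices}, not just local field theory. Without this step your two isomorphisms are taken with respect to a priori different Tate structures and do not compose to prove the conjecture.
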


\begin{theorem}
\label{Conj012}If $X$ is reduced, the Conjecture is true. Even better,%
\[
E^{\operatorname*{Yek}}\cong E^{\operatorname*{Beil}}\cong
E^{\operatorname*{Tate}}\text{,}%
\]
i.e. all three constructions of the endomorphism algebra give canonically
isomorphic results.
\end{theorem}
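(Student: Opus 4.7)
The plan is to prove the chain $E^{\operatorname{Yek}} \cong E^{\operatorname{Tate}} \cong E^{\operatorname{Beil}}$ by induction on the flag length $n$, using the category of $n$-Tate objects of \cite{bgwTateModule} as the intermediate ground on which the other two meet. The base $n = 0$ is trivial: $K$ is a finite extension of $k$ and all three algebras reduce to $\operatorname{End}_{k}(K)$. For the inductive step, given $\triangle = (\eta_{0} > \cdots > \eta_{n})$, let $\triangle' = (\eta_{1} > \cdots > \eta_{n})$ yield the $(n-1)$-local field $K'$; localising and completing at $\eta_{0}$ exhibits $K$ as a $1$-Tate object in the category of $(n-1)$-Tate objects, providing the recursive scaffolding for all three constructions.

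I would first establish $E^{\operatorname{Beil}} \cong E^{\operatorname{Tate}}$, which is essentially a tautology once the dictionary is set up: Beilinson's recursive decomposition of an endomorphism relative to a lattice (into pieces preserving the lattice, lowering into it, raising out of it, and a finite-rank remainder) is exactly the data describing a morphism in the Tate category, and the iteration on lattice and quotient matches the $n$-fold iteration of the Tate construction. The content of this comparison is in verifying that the dictionary is canonical, functorial in the flag, and compatible with composition.

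The substantive work lies in $E^{\operatorname{Yek}} \cong E^{\operatorname{Tate}}$. Yekutieli's topology admits a neighborhood basis of zero by the fractional ideals of the local rings along the flag, and these are exactly the lattices cutting out the outer pro-part of the Tate structure. I would show that a $k$-linear map $f \colon K \to K$ is Yekutieli-continuous if and only if for every such lattice $L$ there is $L'$ with $f(L') \subset L$ and the induced maps on successive sub-quotients are continuous in the $(n-1)$-local sense, which by the inductive hypothesis is the condition to be an $(n-1)$-Tate endomorphism. Assembling these data across the outer ind- and pro-systems produces an $n$-Tate endomorphism.

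The principal obstacle will be this last step, because Yekutieli's topology is defined through a delicate semi-topological construction that is asymmetric: $K$ is not a topological ring for $n \geq 2$, only a topological $K^{+}$-module in a very particular sense, whereas the Tate category treats ind and pro on equal footing. I would address this by peeling off one Tate level at a time, separating the outermost ind-structure (the lattice filtration on $K$) from the inner pro-structure (handled by induction), and checking that Yekutieli's continuity condition decomposes along the same pattern. A secondary but unavoidable issue is certifying that the comparison isomorphism is \emph{canonical} — independent of the auxiliary choice of a system of local parameters and of a coefficient field, and compatible with the ring structure — so that the identification is intrinsic to the flag and patches globally.
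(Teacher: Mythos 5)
There is a genuine gap, and it sits exactly where you locate the ``substantive work.'' Your inductive step identifies $E^{\operatorname*{Yek}}$ with maps that are ``Yekutieli-continuous,'' i.e.\ carry lattices into lattices with continuous maps on sub-quotients. This misdescribes both sides. First, Yekutieli's algebra is \emph{not} the algebra of continuous endomorphisms: for $n\geq 2$ it is strictly smaller than the algebra of all ST-module homomorphisms (Example \ref{example_PreserveTateImpliesPreserveTopology}), so continuity cannot be the defining condition. Second, and more seriously, the recursive condition in Definition \ref{Def_YekBeilCubAlgAndYekLattices} requires reading the torsion quotients $L_{1}/L_{1}^{\prime}$ and $L_{2}^{\prime}/L_{2}$ as modules over the residue field $k_{1}$, and there is no canonical way to do this: the residue map $\mathcal{O}_{1}\twoheadrightarrow k_{1}$ goes the wrong way, so one must fix a system of liftings $\sigma$ (the Dangerous Bend \ref{YekDangerousBend}), and the a priori dependence of $E_{\sigma}^{\operatorname*{Yek}}$ on $\sigma$ is removed only by Yekutieli's theorem on differential operators (Theorem \ref{Thm_YekIndependenceOfSystemOfLiftings}). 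Your ``peeling off one Tate level'' induction never says what structure the sub-quotients carry, so the inductive hypothesis cannot even be formulated; your closing worry about independence of the coefficient field is precisely this issue, but the proposal offers no mechanism to resolve it.

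The second gap is the claim that $E^{\operatorname*{Beil}}\cong E^{\operatorname*{Tate}}$ is ``essentially a tautology'' because the lattice data match, and that the fractional ideals along the flag ``are exactly'' the lattices of the outer pro-part. In fact the three notions of lattice are pairwise distinct: adding a finite-dimensional subspace to a Tate lattice destroys any module structure, and \S\ref{sect_DiffTypesOfLattices} gives a cusp example of a Tate (and Yekutieli) lattice carrying the natural module structure that is still not of the form $\mathcal{L}_{\triangle^{\prime\prime}}$ for any Beilinson lattice. So there is no term-by-term dictionary; what actually makes the comparison work is the sandwich property (Prop.\ \ref{Prop_RelationOfLattices}, Lemma \ref{lemma:Ocfin}): Beilinson, resp.\ Yekutieli, lattices are final and co-final in the Sato Grassmannian of Tate lattices, and the defining conditions of all three algebras only ever ask for existence of larger or smaller lattices, hence are insensitive to sandwiching. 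Establishing this requires the Structure Theorems (\ref{TATE_StructureOfLocalAdelesProp}--\ref{Thm_COA_StructureTheorem3}): $A(\triangle,\mathcal{O}_{X})$ is only a finite \emph{product} of higher local fields, $A(\triangle^{\prime},\mathcal{O}_{X})$ is merely finite under its normalization $\prod\mathcal{O}_{i}$, and one must choose an isomorphism with Laurent series that is \emph{simultaneously} an isomorphism of fields, $k$-algebras, TLFs and $n$-Tate objects --- a nontrivial point, since Examples \ref{example_Yekutieli} and \ref{example_YekutieliIndPro} show a random field isomorphism preserves none of these structures, and without it the Tate structure coming from the flag and the one you compare with Yekutieli's topology need not agree.
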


See Theorem \ref{Thm_ComparisonOfCubicalAlgebras} for the precise result $-$
the above statement is simplified since a careful formulation requires some
preparations which we cannot supply in the introduction.

The theorem establishes a key merit of the $n$-Tate categories of
\cite{TateObjectsExactCats}, namely that $E^{\operatorname*{Yek}}$ and
$E^{\operatorname*{Beil}}$ become \textquotedblleft
representable\textquotedblright\ in the sense that despite the original
hand-made constructions of these algebras, they are nothing but
\textit{genuine} $\operatorname*{End}(-)$\textit{-algebras} of an exact
category.\medskip

Our principal technical ingredient elaborates on the well-known structure
theorem for the ad\`{e}les. The original version is due to A. N. Parshin
\cite{MR0419458} (in dimension $\leq2$), A. A. Beilinson \cite{MR565095}
(proof unpublished), and the first published proof due to A.\ Yekutieli
\cite{MR1213064}. The following version extends his result with regards to the
ind-pro structure of the ad\`{e}les \cite{TateObjectsExactCats}. We write
$A_{X}(\triangle,\mathcal{-})$ to denote the ad\`{e}les of the scheme $X$ for
a flag $\triangle$. Notation is as in \cite{MR565095}. In particular we write
$\triangle^{\prime}$ to denote removing the initial entry from a flag
$\triangle$.

\begin{theorem}
\label{intro_thm_1}Suppose $X$ is a Noetherian reduced excellent scheme of
pure dimension $n$ and $\triangle=\{(\eta_{0}>\cdots>\eta_{n})\}$ a saturated flag.

\begin{enumerate}
\item Then $A_{X}(\triangle,\mathcal{O}_{X})$ is a finite direct product of
$n$-local fields $\prod K_{i}$ such that each last residue field is a finite
field extension of $\kappa(\eta_{n})$. Moreover,%
\[
A_{X}(\triangle^{\prime},\mathcal{O}_{X})\overset{(\ast)}{\subseteq}%
{\textstyle\prod}
\mathcal{O}_{i}\subseteq%
{\textstyle\prod}
K_{i}=A_{X}(\triangle,\mathcal{O}_{X})\text{,}%
\]
where $\mathcal{O}_{i}$ denotes the first ring of integers of $K_{i}$ and
$(\ast)$ is a finite ring extension.

\item These sit in a canonical staircase-shaped diagram%
\[%
\bfig\node x(0,1200)[{A_{\overline{\{\eta_{0}\}}}(\triangle,\mathcal{O}_{X})}]
\node y(0,900)[{A_{\overline{\{\eta_{0}\}}}(\triangle^{\prime},\mathcal{O}%
_{X})}]
\node z(800,900)[{A_{\overline{\{\eta_{1}\}}}(\triangle^{\prime},\mathcal
{O}_{X})}]
\node w(800,600)[{A_{\overline{\{\eta_{1}\}}}(\triangle^{\prime\prime
},\mathcal{O}_{X})}]
\node u(1600,600)[{A_{\overline{\{\eta_{2}\}}}(\triangle^{\prime\prime
},\mathcal{O}_{X})}]
\node v(1600,300)[\vdots]
\arrow/{^{(}->}/[y`x;]
\arrow/{->>}/[y`z;]
\arrow/{^{(}->}/[w`z;]
\arrow/{->>}/[w`u;]
\arrow[v`u;]
\efig
\]

\item If $X$ is finite type over a field $k$, each field factor $K:=K_{i}$ in
(1) is (non-canonically) isomorphic as rings\footnote{but not necessarily
$k$-algebras!} to Laurent series,%
\[
K\overset{\sim}{\longrightarrow}\kappa((t_{1}))((t_{2}))\cdots((t_{n}))
\]
for $\kappa/k$ a finite field extension. This isomorphism can be chosen such
that it is simultaneously an isomorphism

\begin{enumerate}
\item of $n$-local fields,

\item of $n$-Tate objects with values in abelian groups,

\item (if $k$ is perfect) of $k$-algebras,

\item (if $k$ is perfect) of $n$-Tate objects with values in
finite-dimensional $k$-vector spaces,

\item (if $k$ is perfect) of topological $n$-local fields in the sense of Yekutieli.
\end{enumerate}

\item Still assume that $X$ is finite type over a field $k$. After replacing
each ring in (2), except the initial upper-left one, by a canonically defined
finite ring extension, it splits canonically as a direct product of
staircase-shaped diagrams of rings: Each factor has the shape%
\[%
\bfig\node x(0,1200)[{\kappa((t_{1}))\cdots((t_{n}))}]
\node y(0,900)[{\kappa((t_{1}))\cdots[[t_{n}]]}]
\node z(900,900)[{\kappa((t_{1}))\cdots((t_{n-1}))}]
\node w(900,600)[{\kappa((t_{1}))\cdots[[t_{n-1}]]}]
\node u(1800,600)[{\kappa((t_{1}))\cdots((t_{n-2}))}]
\node v(1800,300)[\vdots]
\arrow/{^{(}->}/[y`x;]
\arrow/{->>}/[y`z;]
\arrow/{^{(}->}/[w`z;]
\arrow/{->>}/[w`u;]
\arrow[v`u;]
\efig
\]
under any isomorphism as produced by (3).

\begin{enumerate}
\item The upward arrows are going to the field of fractions,

\item The rightward arrows correspond to passing to the residue
field.\footnote{Moreover, these maps are induced from the corresponding upward
and rightward arrows in (2), but due to the finite ring extensions interfering
here, the precise nature of this is a little too subtle to make precise in the
introduction.}
\end{enumerate}

These are continuous/admissible epics resp. monics in both Yekutieli's
category of ST modules, as well as $n$-Tate objects.

\item If $X$ is finite type over a \textsl{perfect} field $k$, then for each
field factor $K$, the notions of lattices (\`{a} la Beilinson, resp.
Yekutieli, resp. Tate) need not agree, but are pairwise final and co-final
(\textquotedblleft Sandwich property\textquotedblright).
\end{enumerate}
\end{theorem}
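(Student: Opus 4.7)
The plan is to proceed by induction on the dimension $n$ of $X$, with the base case $n=1$ being the classical decomposition of the ad\`eles of a curve into products of completions at closed points (where all five structures in (3) and all three notions of lattice in (5) are manifestly canonical). For the inductive step I would first establish (1) and (2) by invoking the Parshin--Beilinson--Yekutieli structure theorem: $A_X(\triangle,\mathcal{O}_X)$ admits a finite factorization over $A_{\overline{\{\eta_{1}\}}}(\triangle',\mathcal{O}_X)$ as a product of complete discretely valued fields, after which the inductive hypothesis applied to $\overline{\{\eta_{1}\}}$ yields the description as a product of $n$-local fields. The inclusion $(\ast)$ and its finiteness come directly out of this step, using that $X$ is excellent. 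The staircase in (2) is then assembled by applying the ad\`ele functor to each subflag of $\triangle$, with the upward arrows being the inclusion of the ring of integers and the rightward arrows the quotient to the first residue field.

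The technical heart is (3), where one must produce a \emph{single} ring isomorphism to iterated Laurent series that simultaneously realizes all five a priori different structures. The starting point is Cohen's structure theorem applied inductively to the tower of complete DVRs; this yields (a). Carrying the ind-pro presentation of \cite{TateObjectsExactCats} along through the induction upgrades the isomorphism to one of $n$-Tate objects in abelian groups, giving (b). When $k$ is perfect, Cohen sections can be chosen $k$-linearly, which gives (c) and automatically promotes (b) to (d) since the successive lattices then live in finite-dimensional $k$-vector spaces. Finally (e) is obtained by transporting Yekutieli's topology across the $k$-algebra isomorphism, using that his topology is intrinsic to the higher local field and therefore independent of the specific section.

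For (4), normalising the image of the residue map at each level of the induction produces the required canonical finite ring extensions, and compatibility of these extensions with the Cohen sections of (3) makes the staircase decompose as a direct product of pure Laurent-series staircases of the indicated shape. For the sandwich property (5), I would argue by cross-approximation. A Beilinson lattice, being a coherent subsheaf on a neighbourhood of $\eta_n$, is sandwiched between two Tate lattices via the ind-pro structure of (3)(b),(d); conversely, Noetherianness lets one refine any Tate lattice to a coherent one, again up to cofinality. The comparison with Yekutieli's ST-lattices runs along the same lines, using the topology produced in (3)(e) to translate between ``bounded and open'' subspaces and bounded ind-pro sub-lattices.

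The main obstacle I anticipate is the simultaneity demanded by (3): all five categorical/topological structures must be realised by one and the same isomorphism, and a careless induction easily produces five separate but unrelated identifications. The resolution is to thread the ind-pro datum of \cite{TateObjectsExactCats} and the Cohen section in parallel through each inductive step, so that every later structure is fixed by the choices made at the very first stage. Once this is arranged, (4) is essentially bookkeeping with normalisations, and (5) becomes a formal cofinality argument inside a category that has already been identified on the nose.
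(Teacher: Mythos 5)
The step that fails is your derivation of (3)(e). You propose to obtain the TLF statement by ``transporting Yekutieli's topology across the $k$-algebra isomorphism, using that his topology is intrinsic to the higher local field and therefore independent of the specific section.'' That premise is false in the only hard case, characteristic zero, and its failure is precisely the point of the whole story: Yekutieli's example (\cite[Ex.\ 2.1.22]{MR1213064}, reproduced as Example \ref{example_Yekutieli}) produces field, even $k$-algebra, automorphisms of $k((t_{1}))((t_{2}))$ fixing $t_{1},t_{2}$ that are not continuous for the ST topology; hence the topology pulled back along a parametrization genuinely depends on the parametrization, i.e.\ on the Cohen sections chosen. Consequently, transporting the standard topology of $\kappa((t_{1}))\cdots((t_{n}))$ along an isomorphism built as in your steps (a)--(d) yields \emph{some} ST structure on $K$, but there is no reason it agrees with the canonical TLF structure that Yekutieli attaches to the ad\`ele factor of $A_{X}(\triangle,\mathcal{O}_{X})$ --- and (3)(e) is an assertion about that canonical structure. (If your premise were correct, the comparison conjecture this theorem feeds into would be nearly vacuous.) What is actually needed --- and what the paper does in Step 4 of the proof of Theorem \ref{Thm_COA_StructureTheorem3} --- is to construct the parametrization so that at every stage the Cohen section and the induced maps of Pro- and Ind-diagrams are themselves morphisms of ST modules for the ad\`elic (fine/limit/colimit) topologies, so that continuity is proved along the way rather than imposed afterwards; this is the same threading you correctly identify in your ``main obstacle'' paragraph, but that paragraph is undercut by the intrinsicness claim you rely on for (e).

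Two smaller points. First, for (1)--(2) you ``invoke the Parshin--Beilinson--Yekutieli structure theorem,'' which is essentially the statement being proved; citing Yekutieli's published proof of the one-step splitting is legitimate, but the induction must then be set up so as not to be circular (the paper instead reproves it, by an induction along the flag showing that each $A(\eta_{j}>\cdots>\eta_{n},\mathcal{O}_{X})$ is a reduced, complete, semi-local, faithfully flat $\mathcal{O}_{\eta_{j}}$-algebra, then normalizing, with excellence giving the finiteness of $(\ast)$). Second, in (5) the cofinality of Beilinson lattices among Tate lattices is not a soft Noetherianness statement: a general Tate lattice carries no module structure at all (see \S\ref{sect_DiffTypesOfLattices}), so ``refining any Tate lattice to a coherent one'' requires the explicit presentation of $\mathcal{F}_{\triangle}$ as an iterated colimit/limit over Beilinson lattices (Corollary \ref{cor_Obvious}) together with the index sandwiching of Corollary \ref{cor_ToStructTheorem3} and \cite[Lemma 23]{bgwTateModule}, rather than a generic refinement argument.
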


We refer to the main body of the text for notation and definitions. The reader
will find these results in \S \ref{sect_StructTheorems}, partially in greater
generality than stated here. See \cite[3.3.2-3.3.6]{MR1213064} for Yekutieli's
result inspiring the above. Parts (3)-(5) appear to be new results.\medskip

These results focus on the case of schemes over a field, and as we shall
explain below, are truly complicated only in the case of characteristic zero.
Note also that, since we mostly work over a base field, our considerations are
of a geometric/analytic nature, rather than an arithmetic one. Also, no
thoughts on infinite places will appear here. See \cite{MR2658047} for
ad\`{e}les directed towards arithmetic considerations.

Let us explain the relevance of (3): Yekutieli has already shown in
\cite{MR1213064} via an explicit example that in characteristic zero a random
field automorphism of an $n$-local field $K$ is frequently not continuous.
Using Yekutieli's technique in our context leads us to the following variation
of his idea:

\begin{theorem}
Suppose an $n$-local field $K$ is equipped with an $n$-Tate object structure
in $k$-vector spaces. If $\operatorname*{char}(k)=0$ and $n\geq2$, then not
every field automorphism of $K$ will preserve the $n$-Tate object structure.
\end{theorem}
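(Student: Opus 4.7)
The plan is to import Yekutieli's explicit counterexample (a field automorphism of $\kappa((t_1))\cdots((t_n))$ which is not continuous in Yekutieli's topology, in characteristic zero, for $n\geq 2$) and use the identification $E^{\operatorname{Tate}}\cong E^{\operatorname{Yek}}$ from Theorem~\ref{Conj012} to translate non-continuity into failure of preserving the $n$-Tate structure.

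First, I would apply Theorem~\ref{intro_thm_1}(3) to fix a ring isomorphism
\[
\varphi : K \xrightarrow{\sim} L := \kappa((t_1))\cdots((t_n))
\]
which is simultaneously an isomorphism of $n$-Tate objects in $k$-vector spaces (property (d)) and an isomorphism of topological $n$-local fields in Yekutieli's sense (property (e)). Under $\varphi$, the given $n$-Tate structure on $K$ is identified with the standard one on $L$, while the topology on $K$ implicit in that Tate structure is identified with the standard Yekutieli topology on $L$. In particular, a field automorphism $\sigma$ of $K$ preserves the $n$-Tate structure (respectively, is continuous in Yekutieli's sense) if and only if $\tau := \varphi\sigma\varphi^{-1}$ has the corresponding property on $L$.

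Second, I would invoke Yekutieli's construction from \cite{MR1213064}: for $\operatorname{char}(k)=0$ and $n\geq 2$, the field $L$ has infinite transcendence degree over the subfield controlling its topology, and a suitable permutation of a transcendence basis yields a field automorphism $\tau$ of $L$ which is not continuous. Setting $\sigma := \varphi^{-1}\tau\varphi$ then gives a field automorphism of $K$. The remaining task is to argue that $\sigma$ cannot preserve the $n$-Tate structure.

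The main obstacle is precisely this final implication: \emph{morphism of $n$-Tate objects $\Rightarrow$ continuous in Yekutieli's sense}. But this is exactly what Theorem~\ref{Conj012} supplies, since its comparison holds as an equality of subalgebras of $\operatorname{End}_k(K)$ rather than as an abstract isomorphism of algebras. Granting this, if $\sigma$ preserved the $n$-Tate structure it would lie in $E^{\operatorname{Yek}}$, whence $\tau$ would be Yekutieli-continuous, contradicting its construction. The technical care needed is to read off the subalgebra-level statement from Theorem~\ref{Thm_ComparisonOfCubicalAlgebras} and to verify that the single isomorphism $\varphi$ from Theorem~\ref{intro_thm_1}(3) does in fact match both structures at once; both points are settled by the earlier parts of the paper.
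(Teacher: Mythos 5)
Your argument is correct in outline, but it is a genuinely different proof from the paper's. The paper's proof is Example~\ref{example_YekutieliIndPro}: it \emph{strengthens} Yekutieli's construction by choosing $c_{i}:=t_{1}^{Q(i)}$ for a surjection $Q\colon I\twoheadrightarrow\mathbf{Z}$, and then shows directly, using only the lattice/Pro-object factorization property of Tate objects (\cite[Prop.\ 5.8]{TateObjectsExactCats}), that the resulting field automorphism cannot be induced by a morphism of $2$-Tate objects: the image of the lattice $k[[t_{1}]]$ under the reduction $\tilde{\sigma}\mid_{(0,j_{2})}$ contains arbitrarily negative powers of $t_{1}$, so it factors through no lattice of the target. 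This is elementary, self-contained, and independent of the main comparison theorems --- which matters, since the example is stated and used as motivation \emph{before} those theorems are proved. Your route instead deduces the implication ``Tate-structure-preserving $\Rightarrow$ Yekutieli-continuous'' from the subalgebra-level equality $E^{\operatorname*{Tate}}=E^{\operatorname*{Yek}}$ together with Yekutieli's theorem that elements of $E^{\operatorname*{Yek}}$ are ST-continuous, and then quotes Yekutieli's original discontinuous automorphism (Example~\ref{example_Yekutieli}). This is essentially the alternative the paper itself attributes to Osipov (see the remark before Example~\ref{example_PreserveTateImpliesPreserveTopology}); what it buys is brevity at the price of invoking the heaviest results of the paper, and it produces no explicit witness of the failure at the level of lattices, whereas the paper's computation does.

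Two repairs are needed to make your version airtight. First, Theorem~\ref{intro_thm_1}(3) is a statement about the field factors of $A_{X}(\triangle,\mathcal{O}_{X})$ with their \emph{canonical} (ad\`{e}le-induced) structures; it does not apply to an abstract $n$-local field carrying an arbitrarily prescribed $n$-Tate structure, so your step~1 is not literally licensed as stated. For the situation the statement is actually about --- the standard Kato structure on $\kappa((t_{1}))\cdots((t_{n}))$, or the ad\`{e}le-induced one --- you should instead invoke the stand-alone comparison Theorem~\ref{Theorem_LaurentYekIsTate} (an equality of subsets of $\operatorname*{Hom}_{k}$, with no scheme needed), or Theorem~\ref{Thm_ComparisonOfCubicalAlgebras} combined with the structure-matching of Theorem~\ref{Thm_COA_StructureTheorem3}. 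Second, the step ``$\sigma\in E^{\operatorname*{Yek}}$ hence continuous'' is not automatic from the definition of $E^{\operatorname*{Yek}}$; it is Yekutieli's \cite[Thm.\ 4.24]{MR3317764}, quoted in Example~\ref{example_PreserveTateImpliesPreserveTopology}, and must be cited. (Also, Yekutieli's discontinuous automorphism arises from perturbing a lifting of a transcendence basis, $b_{i}\mapsto b_{i}+c_{i}t_{2}$, extended over the algebraic extension using $\operatorname*{char}(k)=0$, not from a permutation of a transcendence basis; the characteristic-zero hypothesis enters exactly at that extension step.)
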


See Example \ref{example_YekutieliIndPro}. Jointly with Yekutieli's original
example, this shows that in (3), the validity of property (a) does not imply
(b)-(e) being true as well.\medskip

The definitions of the endomorphism algebras $E_{\triangle}%
^{\operatorname*{Tate}},E_{K}^{\operatorname*{Yek}},E_{\triangle
}^{\operatorname*{Beil}}$ all hinge on notions of \textit{lattices}, whose
definitions we shall address later in the paper. We shall show that the
different notions of lattices used by Beilinson, Yekutieli or coming from Tate
objects are all pairwise distinct. One might state the comparison as:
\[
\text{Beilinson lattices\quad}\subsetneqq\text{\quad Yekutieli lattices\quad
}\subsetneqq\text{\quad Tate lattices,}%
\]
modulo a slight abuse of language since these types of lattices live in
different objects. See \S \ref{sect_DiffTypesOfLattices} for an example
demonstrating this. If these notions all agreed, this would have resulted in a
particularly easy proof of Theorem \ref{Conj012}.\medskip

Let us survey the relation among the central players of this paper: Let us
assume the base field $k$ is perfect.%
\begin{equation}%
{\includegraphics[
height=2.5953in,
width=3.5578in
]%
{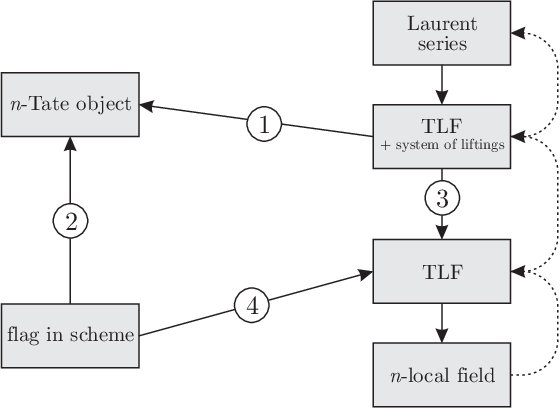}%
}
\label{lBigFigure}%
\end{equation}
The solid arrows refer to a canonical construction. Each dashed arrow
expresses that a structure can non-canonically be enriched with additional
structure. By an \textquotedblleft$n$-Tate object\textquotedblright\ we mean
an $n$-Tate object in finite-dimensional $k$-vector spaces. By an
\textquotedblleft$n$-local field\textquotedblright\ we refer to an
equicharacteristic $n$-local field with last residue field finite over $k$. By
\textquotedblleft flag in scheme\textquotedblright\ we refer to the ad\`{e}les
$A(\triangle,\mathcal{O}_{X})$, for a saturated flag $\triangle$ in a suitable
scheme $X$ of finite type over $k$. By \textquotedblleft TLF\textquotedblright%
\ we refer to a topological $n$-local field in the sense of Yekutieli. By
\textquotedblleft Laurent series\textquotedblright\ we refer to $k^{\prime
}((t_{1}))\cdots((t_{n}))$ with $k^{\prime}/k$ a finite field extension.

Arrow (1) refers to a certain construction $\sharp_{\sigma}$, established in
Theorem \ref{Theorem_TLFPlusSysOfLiftingsYekIsTate}. Arrow (2) refers to the
canonical $n$-Tate object structure of the ad\`{e}les from
\cite{TateObjectsExactCats}. The downward solid arrows on the right, in
particular Arrow (3), just refer to forgetting additional structure. Arrow (4)
refers to Yekutieli's construction of the TLF\ structure on the ad\`{e}les
\cite{MR1213064}.

\begin{dangerousbend}
It is a priori not clear that a TLF\ can be equipped with a system of liftings
inverting Arrow (3) such that we would get a commutative diagram.
\end{dangerousbend}

However, a different way to state the innovation in Theorem \ref{intro_thm_1}
is that it is possible to pick an isomorphism of $A(\triangle,\mathcal{O}%
_{X})$ with a Laurent series field such that we arrive at the same objects, no
matter which path through Figure \ref{lBigFigure} we choose. That is, no
matter through which arrows we produce an $n$-Tate object (resp. TLF), we get
the same object.

The objects in Figure \ref{lBigFigure} come with three (a priori different)
endomorphism algebras:

\begin{itemize}
\item $E^{\operatorname*{Beil}}$ of the flag of the scheme, global
Beilinson-Tate operators.

\item $E_{\sigma}^{\operatorname*{Yek}}$ of a TLF\ with a system of liftings
$\sigma$, local Beilinson-Tate operators.

\item $E^{\operatorname*{Tate}}$ the \textit{genuine endomorphisms} in the
category of $n$-Tate objects, i.e. really just a plain $\operatorname*{Hom}%
$-group. This, by the way, shows the conceptual advantage of working with
$n$-Tate categories.
\end{itemize}

A deep result of Yekutieli, quoted below as Theorem
\ref{Thm_YekIndependenceOfSystemOfLiftings}, shows that $E_{\sigma
}^{\operatorname*{Yek}}$ does not depend on $\sigma$, so we can speak of
$E^{\operatorname*{Yek}}$ of a TLF. Our paper \cite{bgwTateModule} shows that
Arrow (2) induces an isomorphism $E^{\operatorname*{Beil}}\cong
E^{\operatorname*{Tate}}$. Yekutieli's Conjecture asks whether Arrow (4)
induces an isomorphism $E^{\operatorname*{Beil}}\cong E^{\operatorname*{Yek}}%
$. We prove this in Theorem \ref{Thm_ComparisonOfCubicalAlgebras}.

In \S \ref{section_StandAloneHLFs}, we prove that Arrow (1) induces an
isomorphism $E^{\operatorname*{Yek}}\cong E^{\operatorname*{Tate}}$. This is a
result of independent interest. It touches a slightly different aspect than
Yekutieli's Conjecture since it refers to the $n$-Tate structure produced by
Arrow (1), while the conjecture is about the $n$-Tate structure of Arrow (2).
By Theorem \ref{intro_thm_1} we know that we can find a system of liftings
such that both $n$-Tate structures match, and this yields a proof of
Yekutieli's Conjecture.

\begin{acknowledgement}
We would like to thank Amnon Yekutieli and Alberto C\'{a}mara for carefully
reading an earlier version of the manuscript and their very insightful
remarks. Our category-oriented viewpoint has been shaped by Mikhail Kapranov.
Moreover, we heartily thank Alexander Beilinson, Fedor Bogomolov, and Ivan
Fesenko, whose encouragement and interest in our work was pivotal.\medskip
\newline We thank the anonymous referee for his/her very careful review, which
led to a significant improvement of the presentation and some simplification
of the arguments.
\end{acknowledgement}

\begin{Dedication}
This paper is dedicated to Fedor Bogomolov. We thank him for his interest, and
for discussions during the \textquotedblleft Symmetries and
Correspondences\textquotedblright\ Conference in July 2014, where Yekutieli
presented his conjecture. Fedor Bogomolov's almost surreal originality and
playful creativity have a tremendous impact on the maths community, but beyond
that, his papers radiate a great joy in doing mathematics, which is very
inspiring and impossible to resist. Happy birthday!
\end{Dedication}

\section{The topology problem for local
fields\label{SECT_TopologyProblemForHigherLocalFields}}

In this section we shall introduce the main players of the story. We will use
this opportunity to give a survey over many (not even all) of the approaches
to give higher local fields a topology or at least a structure replacing a
topology. This issue is surprisingly subtle and many results are scattered
over the literature.

\subsection{Na\"{\i}ve topology}

A complete discrete valuation field $K$ with the valuation $v$ comes with a
canonical topology, which we shall call the \emph{na\"{\i}ve topology},
namely: Take the sets $U_{i}:=\{x\in K\mid v(x)\geq i\}$ as an open
neighbourhood basis of the identity. This topology is highly intrinsic to the
algebraic structure.

We recall the crucial fact that a field cannot be a complete discrete
valuation field with respect to several valuations:

\begin{lemma}
[F. K.\ Schmidt]\label{lemma_CompleteDiscValuationsAreUnique}If a field $K$ is
complete with respect to a discrete valuation $v$,

\begin{enumerate}
\item then every discrete valuation on $K$ is equivalent to $v$;

\item any isomorphism of such fields stems from a unique isomorphism of their
rings of integers;

\item and is automatically continuous (in the na\"{\i}ve topology).
\end{enumerate}
\end{lemma}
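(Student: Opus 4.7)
The nontrivial content of the lemma is assertion (1), which is F.~K.~Schmidt's classical uniqueness theorem; assertions (2) and (3) are then formal consequences. I will sketch (1) and then derive the other two parts.

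For (1), I would argue by contradiction. Assume $w$ is another non-trivial discrete valuation on $K$, not equivalent to $v$. Since both valuations are of rank one, inequivalence forces $\mathcal{O}_v$ and $\mathcal{O}_w$ to be incomparable, and by Artin--Whaples weak approximation applied to the pair $\{v,w\}$, one can find an element $c \in K$ with $v(c) > 0$ and $w(c) < 0$. The strategy is then to exploit $v$-completeness to manufacture an element of $K$ whose $w$-value cannot lie in $\mathbb{Z}$. Concretely, pick a sufficiently fast-growing sequence of positive integers $(n_k)$ together with units $\lambda_k \in \mathcal{O}_v^{\times}$; the partial sums $s_N := \sum_{k\leq N}\lambda_k c^{n_k}$ form a $v$-Cauchy sequence, since $v(\lambda_k c^{n_k}) = n_k v(c) \to \infty$, and so $v$-converge to some $s \in K$. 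On the other hand, the $w$-values $w(\lambda_k c^{n_k}) = n_k w(c)$ march off to $-\infty$ in controlled fashion, and one can choose the exponents $n_k$ and the units $\lambda_k$ so that the tail remainders $s-s_N$ are forced (by discreteness of $w$) into integer gaps that they cannot simultaneously occupy. This yields the desired contradiction, so $w\sim v$.

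Once (1) is in hand, (2) is formal: given a field isomorphism $\phi\colon K \xrightarrow{\sim} K'$ between two complete discrete valuation fields, the pullback $v'\circ\phi$ is a non-trivial discrete valuation on $K$ and therefore, by (1), equivalent to $v$; after normalising both valuations to have value group $\mathbb{Z}$, equivalence collapses to equality of valuation rings, so $\phi(\mathcal{O}_v) = \mathcal{O}_{v'}$. Hence $\phi$ restricts to a ring isomorphism of integral domains, and this restriction is unique since any isomorphism of integral domains admits at most one extension to the fraction field. Assertion (3) is then also immediate, because $\phi(\mathfrak{m}_v^n) = \mathfrak{m}_{v'}^n$ for every $n \geq 0$, so $\phi$ carries the na\"{\i}ve neighbourhood basis of $0$ in $K$ bijectively onto that in $K'$ and is therefore a homeomorphism.

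The principal obstacle is the final contradiction step in (1): arranging the coefficients $\lambda_k$ and exponents $n_k$ so that the $v$-limit $s \in K$ produced by completeness is incompatible with any integer value of $w(s)$. The remaining pieces --- transporting a valuation along $\phi$, identifying rings of integers, and translating equality of valuation rings into continuity --- are essentially bookkeeping.
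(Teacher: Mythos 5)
Your reductions of (2) and (3) to (1) are fine and match the paper's (the valuation determines the ring of integers, and the na\"{\i}ve topology is defined purely in terms of the valuation). But part (1), which you yourself identify as the only nontrivial content, has a genuine gap: the contradiction is never produced, and the strategy as described cannot work in the form stated. First, the goal of manufacturing ``an element of $K$ whose $w$-value cannot lie in $\mathbb{Z}$'' is incoherent, since $w$ is by hypothesis a discrete valuation with values in $\mathbf{Z}$; the contradiction must be of a different nature. Second, there is no continuity relation whatsoever between $w$ and $v$-adic limits, so the $w$-values of the tails $s-s_N$ are a priori unconstrained; the only constraint the ultrametric inequality forces is $w(s-s_N)=n_Nw(c)$ for all large $N$ (once $n_Nw(c)<w(s)$), which is perfectly consistent and yields no contradiction. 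Moreover each tail depends on the entire infinite sequence of choices $(n_k,\lambda_k)$, so one cannot choose them ``adaptively'' to trap the tails in forbidden gaps; and taking $\lambda_k\in\mathcal{O}_v^{\times}$ gives no control on $w(\lambda_k)$, so even $w(\lambda_kc^{n_k})=n_kw(c)$ is unjustified.

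The paper's argument supplies exactly the missing mechanism, and it is of a different kind: by approximation choose $x$ with $w(x-\pi_w)\geq 1$ and $v(x-1)\geq 1$, so that $w(x)\geq 1$ and $x=1+a$ with $a\in\mathfrak{m}\mathcal{O}_K$. Completeness of $v$ makes the binomial series $(1+a)^{1/l^{n}}=\sum_r\binom{1/l^{n}}{r}a^{r}$ converge (for $l\geq 2$ prime to the residue characteristic), so $x$ is an $l^{n}$-th power for every $n$; hence $l^{n}\mid w(x)$ for all $n$, forcing $w(x)=0$ and contradicting $w(x)\geq 1$. In other words, the contradiction comes from infinite divisibility of $w(x)$ in $\mathbf{Z}$, not from the behaviour of $w$ on $v$-adically convergent series; some such algebraic characterization of $\mathcal{O}_v$ (or a henselianity argument, as in Morrow) is what your sketch is missing.
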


See Morrow \cite[\S 1]{MorrowHLF}, who has very clearly emphasized the
importance of this uniqueness statement. A thorough study of such and related
questions can be found in the original paper of Schmidt \cite{MR1512831}.

\begin{proof}
For the sake of completeness, we give an argument, an alternative to the one
in \cite{MorrowHLF}: (1) Let $w$ be a further discrete valuation, not
equivalent to $v$, and $\pi_{w}$ a uniformizer for it. By the Approximation
Theorem \cite[Ch. I, (3.7) Prop.]{MR1915966} one can pick an element $x\in K$
so that%
\[
w(x-\pi_{w})\geq1\qquad\text{and}\qquad v(x-1)\geq1\text{.}%
\]
By the first property, $w(x)\geq1$. By the latter $x=1+a$ for some
$a\in\mathfrak{m}\mathcal{O}_{K}$ and if $l\geq2$ is any integer (such that
$l\nmid\operatorname*{char}(\mathcal{O}_{K}/\mathfrak{m})$ in case
$\mathcal{O}_{K}/\mathfrak{m}$ has positive characteristic), the series
$(1+a)^{1/l^{n}}:=\sum_{r=0}^{\infty}\tbinom{1/l^{n}}{r}a^{r}$ converges,
showing that $x$ is $l$-divisible. So $w(x)\in\mathbf{Z}$ is $l$-divisible,
forcing $w(x)=0$. This is a contradiction. (2) follows since the valuation
determines the ring of integers, (3) follows since the na\"{\i}ve topology is
defined solely in terms of the valuation.
\end{proof}

\begin{definition}
[Parshin \cite{MR0401710}, \cite{MR514485}, Kato \cite{MR517332}%
]\label{def_NLocalField}For $n\geq1$, an $n$\emph{-local field} \emph{with
last residue field} $k$ is a complete discrete valuation field $K$ such that
if $(\mathcal{O}_{1},\mathfrak{m})$ denotes its ring of integers,
$\mathcal{O}_{1}/\mathfrak{m}$ is an $(n-1)$-local field with last residue
field $k$. A $0$-local field with last residue field $k$ is just $k$ itself.
\end{definition}

Inductively unravelling this definition, every $n$-local field $K$ gives rise
to the following staircase-shaped diagram%
\begin{equation}%
\bfig\node x(0,1200)[K]
\node y(0,900)[\mathcal{O}_{1}]
\node z(300,900)[k_1]
\node w(300,600)[\mathcal{O}_{2}]
\node u(600,600)[k_2]
\node v(600,300)[\vdots,]
\arrow/{^{(}->}/[y`x;]
\arrow/{->>}/[y`z;]
\arrow/{^{(}->}/[w`z;]
\arrow/{->>}/[w`u;]
\arrow[v`u;]
\efig
\label{lda14}%
\end{equation}
where the $\mathcal{O}_{i}$ denote the respective rings of integers, and the
$k_{i}$ the residue fields. We call the integers $(\operatorname*{char}%
K,\operatorname*{char}k_{1},\ldots,\operatorname*{char}k_{n})$ the
\emph{characteristic} of $K$.

\begin{corollary}
\label{Cor_FieldIsoOfNLocalFieldsIsNLocalFieldIso}Fix $n\geq0$.

\begin{enumerate}
\item If a field $K$ possesses the structure of an $n$-local field at all, it
is unique.

\item If $K\overset{\sim}{\longrightarrow}K^{\prime}$ is a field isomorphism
of $n$-local fields, it is automatically continuous in the na\"{\i}ve topology
and induces isomorphisms of its residue fields,%
\[
k_{i}\overset{\sim}{\longrightarrow}k_{i}^{\prime}\text{,}%
\]
each also continuous in the na\"{\i}ve topology, as well as an isomorphism of
last residue fields $k\overset{\sim}{\longrightarrow}k^{\prime}$.
\end{enumerate}
\end{corollary}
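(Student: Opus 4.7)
\smallskip

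The plan is to proceed by induction on $n$, using Lemma \ref{lemma_CompleteDiscValuationsAreUnique} at every inductive step.

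The base case $n=0$ is vacuous (a $0$-local field is just a field with no additional structure), and for $n=1$ the corollary is essentially a restatement of F.\ K.\ Schmidt's lemma: part (1) of that lemma says every discrete valuation on a complete discretely valued field $K$ is equivalent to the given one, so the valuation, hence the ring of integers $\mathcal{O}_1$, hence the last residue field $k_1=\mathcal{O}_1/\mathfrak{m}$, are uniquely determined by $K$ as a field; parts (2) and (3) give the continuity and the induced iso on rings of integers, and therefore on residue fields.

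For the inductive step, assume the corollary for $(n-1)$-local fields. Given an $n$-local field structure on $K$, Schmidt's lemma again pins down the valuation $v$ on $K$ up to equivalence, so $\mathcal{O}_1$ and $k_1$ are determined by $K$ as a field. By Definition \ref{def_NLocalField}, $k_1$ carries an $(n-1)$-local field structure, and by the inductive hypothesis that structure is uniquely determined by $k_1$ as a field; unravelling the staircase \eqref{lda14} via this induction gives (1), and simultaneously determines the last residue field $k$. For (2), given a field isomorphism $K\overset{\sim}{\longrightarrow}K^{\prime}$, Schmidt's lemma supplies a unique iso $\mathcal{O}_1\overset{\sim}{\longrightarrow}\mathcal{O}_1^{\prime}$ of rings of integers and asserts the continuity in the na\"{\i}ve topology; quotienting by the maximal ideals yields a field isomorphism $k_1\overset{\sim}{\longrightarrow}k_1^{\prime}$ of the residue fields, which by (1) is automatically an isomorphism of $(n-1)$-local fields. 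Applying the inductive hypothesis to this iso gives continuity in the na\"{\i}ve topology of $k_1$, the chain of residue field isomorphisms, and the isomorphism of last residue fields.

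The argument is essentially bookkeeping once Schmidt's lemma is in hand; the only point that deserves a second of care is to observe that at each stage of the induction the iso of rings of integers \emph{descends} to an iso of residue fields (which is automatic since maximal ideals are mapped to maximal ideals by any ring iso). There is no genuine obstacle, since every nontrivial content has been absorbed into Lemma \ref{lemma_CompleteDiscValuationsAreUnique}.
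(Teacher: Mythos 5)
Your proposal is correct and is exactly the argument the paper intends: the paper's proof is the single line ``This follows by induction from Lemma \ref{lemma_CompleteDiscValuationsAreUnique},'' and you have simply written out that induction, using Schmidt's lemma at each level to pin down the valuation, pass to the ring of integers, and descend to the residue field. No difference in approach, and no gaps.
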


\begin{proof}
This follows by induction from Lemma
\ref{lemma_CompleteDiscValuationsAreUnique}.
\end{proof}

Note that the number $n$ is not uniquely determined. An $n$-local field is
always also an $r$-local field for all $0\leq r\leq n$.

\begin{example}
If $k$ is any field, the multiple Laurent series field $k((t_{1}%
))\cdots((t_{n}))$ is an example of an $n$-local field with last residue field
$k$. It has characteristic $(0,\ldots,0)$ or $(p,\ldots,p)$ depending on
$\operatorname*{char}(k)=0$ or $p$. The field $\mathbf{Q}_{p}((t_{1}%
))\cdots((t_{n}))$ is an example of an $(n+1)$-local field with last residue
field $\mathbf{F}_{p}$. It has characteristic $(0,\ldots,0,p)$. See
\cite{MR1804915} for many more examples.
\end{example}

Let $(R,\mathfrak{m})$ be a complete Noetherian local domain and
$\mathfrak{m}$ its maximal ideal. A \emph{coefficient field} is a sub-field
$F$ so that the composition $F\hookrightarrow R\twoheadrightarrow
R/\mathfrak{m}$ is an isomorphism of fields.

\begin{proposition}
[Cohen's Structure Theorem]\label{Prop_CohenStructureTheorem}Let
$(R,\mathfrak{m})$ be a complete Noetherian local domain and $\mathfrak{m}$
its maximal ideal.

\begin{enumerate}
\item If $R$ contains a field (at all), a coefficient field exists.

\item If $\operatorname*{char}(R)=\operatorname*{char}(R/\mathfrak{m})$, a
coefficient field exists.

\item If $F$ is any coefficient field and $x_{1},\ldots,x_{r}\in\mathfrak{m}$
a system of parameters,%
\begin{align*}
F[[t_{1},\ldots,t_{r}]]  & \hookrightarrow R\\
t_{i}  & \mapsto x_{i}%
\end{align*}
is injective and $R$ is a finite module over its image. If $R$ is regular, one
can find $x_{1},\ldots,x_{r}\in\mathfrak{m}$ such that the corresponding
injection becomes an isomorphism of rings%
\[
F[[t_{1},\ldots,t_{r}]]\overset{\sim}{\longrightarrow}R\text{.}%
\]

\item (\cite[Theorem 1.1]{MR3317764}) Suppose $k$ is a perfect field and $R$ a
$k$-algebra. Then one can find a coefficient field $F$ containing $k$ and such
that $F\hookrightarrow R$ is a $k$-algebra morphism. If the residue field
$R/\mathfrak{m}$ is finite over $k$, there is only one coefficient field
having this additional property.
\end{enumerate}
\end{proposition}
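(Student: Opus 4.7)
The plan is to produce the coefficient field by a Zorn-type maximality construction handling parts (1) and (2) simultaneously, then deduce the power-series embedding of (3) from complete Nakayama plus a Krull-dimension count, and finally refine to (4) by running the maximality argument under the constraint that the subfield contains $k$.

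For (1)/(2): In both cases $R$ contains its prime field $F_{0}$ (namely $\mathbf{Q}$ if $\operatorname{char}(R)=0$, since any nonzero integer is then a unit in $R$, and $\mathbf{F}_{p}$ otherwise). By Zorn's lemma pick a subfield $F\subseteq R$ maximal under inclusion. The composition $F\to R\to R/\mathfrak{m}$ is injective because $F$ is a field; the claim is that it is surjective. Suppose $\bar a\in R/\mathfrak{m}$ is not in the image and lift it to $a\in R$. If $\bar a$ is transcendental over $F$, then for every nonzero $f\in F[T]$ the value $f(a)$ is not in $\mathfrak{m}$ and is therefore a unit of $R$, so $F(a)$ embeds into $R$, contradicting maximality. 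If $\bar a$ is algebraic and separable over $F$, then its minimal polynomial has non-vanishing derivative at $a$, so Hensel's lemma (valid because $R$ is complete) lifts $\bar a$ to a root $a'\in R$, and $F[a']\subseteq R$ is again a subfield properly containing $F$. The only remaining case, namely an inseparable $\bar a$ in equicharacteristic $p$, is the technical heart of Cohen's theorem and the main obstacle of the whole proof; it is handled by the classical $p$-basis argument, choosing a $p$-basis of $R/\mathfrak{m}$ over the image of $F$ and successively adjoining lifts, which again contradicts maximality.

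For (3): The assignment $T_{i}\mapsto x_{i}$ extends to a well-defined ring homomorphism $\varphi:F[[T_{1},\dots,T_{r}]]\to R$ by $\mathfrak{m}$-adic completeness of $R$ (since $x_{i}\in\mathfrak{m}$). Because $(x_{1},\dots,x_{r})$ is $\mathfrak{m}$-primary, $R/(x_{1},\dots,x_{r})R$ is Artinian local with residue field $F$ and hence finite-dimensional over $F$; lifting such an $F$-basis and applying complete Nakayama in the $(T_{1},\dots,T_{r})$-adic topology shows $R$ is finite as an $F[[T_{1},\dots,T_{r}]]$-module. By preservation of Krull dimension under finite ring maps, the image $A:=\varphi(F[[T]])$ satisfies $\dim A=\dim R=r$. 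Since any proper quotient of the $r$-dimensional regular domain $F[[T_{1},\dots,T_{r}]]$ has dimension strictly less than $r$ (by Krull's Hauptidealsatz), $\varphi$ must be injective. When $R$ is regular, $r=\dim R$ and the chosen parameters generate $\mathfrak{m}$, so $\varphi$ induces an isomorphism of associated gradeds, identifying $(R/\mathfrak{m})[T_{1},\dots,T_{r}]$ with $\operatorname{gr}_{\mathfrak{m}}R$; both sides being complete in their respective filtrations, $\varphi$ itself is an isomorphism.

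For (4): Rerun the Zorn argument of (1)/(2), but restricted to the poset of subfields of $R$ containing the image of $k$. Since $k$ is perfect, every algebraic extension of $k$ inside $R/\mathfrak{m}$ is separable, so only the transcendental and separable-algebraic subcases appear, each closed by the pure Hensel-lifting argument above; the delicate inseparable case never arises. For uniqueness under the hypothesis that $R/\mathfrak{m}$ is finite over $k$: this residue field is then finite separable over $k$, so $R/\mathfrak{m}=k(\bar\alpha)$ for a primitive element with separable minimal polynomial $p(T)\in k[T]$. Any $k$-algebra coefficient field $F\subseteq R$ must contain a root of $p$ reducing to $\bar\alpha$; by the uniqueness clause of Hensel's lemma such a root is unique in $R$, so any two such coefficient fields both equal $k[\alpha]$ for the \emph{same} $\alpha$, proving uniqueness.
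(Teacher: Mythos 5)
The paper does not actually prove this proposition; it cites Cohen's original paper, Matsumura, and Yekutieli \cite[Thm.\ 1.1]{MR3317764} for (4), so your attempt has to stand on its own, and it has two genuine gaps, both in the characteristic $p$ situations. First, in (1)/(2) the Zorn argument does not close in the inseparable case, and your one-line appeal to ``adjoining lifts of a $p$-basis, which again contradicts maximality'' is not a valid repair: if $\bar a$ is purely inseparable over the image of the maximal subfield $F$, say $\bar a^{p}=\bar c$ with $c\in F$, then an arbitrary lift $a$ of $\bar a$ satisfies only $a^{p}-c\in\mathfrak{m}$, so $F[a]$ contains a nonzero non-unit and is not a field; and an exact $p$-th root of $c$ need not exist in $R$ (Hensel is unavailable because the derivative of $X^{p}-c$ vanishes). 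This is precisely why the equicharacteristic $p$ case is the heart of Cohen's theorem: the classical proof does not proceed by enlarging a maximal subfield but constructs the coefficient field directly from a lifted $p$-basis of the residue field using completeness (Cohen; Matsumura \S 28), or via formal smoothness/separability. In fact it is not true that a maximal subfield of a complete local ring is automatically a coefficient field in this case, so the skeleton of your argument, not just a detail, breaks here.

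Second, the existence part of (4) has the same problem in disguise. Perfectness of $k$ guarantees separability of algebraic extensions \emph{of $k$}, but the case distinction in your Zorn argument is governed by separability over the image of the intermediate field $F\supseteq k$, which may be imperfect even when $k$ is perfect: e.g.\ for $R=\mathbf{F}_{p}(u)[[t]]$ and $k=\mathbf{F}_{p}$ the process can produce an $F$ whose image is $\mathbf{F}_{p}(u^{p})$, over which $u$ is purely inseparable, so the ``delicate inseparable case'' does arise. The correct route (and the content of Yekutieli's Theorem 1.1, which the statement cites) is that $k$ perfect makes $R/\mathfrak{m}$ separable, hence formally smooth, over $k$, so the identity of $R/\mathfrak{m}$ lifts compatibly through the surjections $R/\mathfrak{m}^{n}$ to a $k$-algebra section $R/\mathfrak{m}\rightarrow R=\varprojlim R/\mathfrak{m}^{n}$. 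Your uniqueness argument in (4) via the primitive element and Hensel's uniqueness is fine, and (3) is essentially the standard argument (complete Nakayama for module-finiteness, dimension count for injectivity), with the caveat—shared with the paper's own loose phrasing—that the isomorphism in the regular case requires the $x_{i}$ to be a \emph{regular} system of parameters, i.e.\ to generate $\mathfrak{m}$; a general system of parameters only yields a finite extension, as $k[[t^{2}]]\subset k[[t]]$ shows.
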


This stems from Cohen's famous paper \cite{MR0016094}. Many more modern
references exist, e.g. \cite[Thm. 4.3.3]{MR2266432} for an overview, \cite[Ch.
11]{MR575344} or \cite[\S 29 and \S 30]{MR1011461} for the entire story. See
Yekutieli's paper \cite[\S 1]{MR3317764} for (4).

An immediate consequence, modulo an easy induction, is the following (simple)
excerpt of the classification theory for higher local fields:

\begin{proposition}
[Classification]\label{prop_CohenStructureThmForEquicharNLocalFields}Let $K$
be an $n$-local field with last residue field $k$ such that all fields
$K,k_{i}$ have the same characteristic. Then there exists a non-canonical
isomorphism of fields%
\[
K\simeq k((t_{1}))\cdots((t_{n}))\text{.}%
\]

\end{proposition}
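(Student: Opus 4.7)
The plan is to proceed by induction on $n$. The base case $n=0$ is trivial, since by Definition \ref{def_NLocalField} a $0$-local field with last residue field $k$ is $k$ itself, and the product of Laurent series collapses to $k$.

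For the inductive step, let $K$ be an $n$-local field with ring of integers $\mathcal{O}_1$ and residue field $k_1$, so that $k_1$ is an $(n-1)$-local field with last residue field $k$. The equicharacteristic hypothesis on $K$ transfers verbatim to $k_1$ (its residue fields are $k_2,\ldots,k_n=k$), so the inductive hypothesis will apply to $k_1$ once we have isolated it as a subfield of $K$. To do so, observe that $\mathcal{O}_1$ is a complete discrete valuation ring, hence a complete Noetherian regular local domain of dimension $1$, and since $\operatorname{char}(\mathcal{O}_1)=\operatorname{char}(K)=\operatorname{char}(k_1)=\operatorname{char}(\mathcal{O}_1/\mathfrak{m})$ by the equicharacteristic assumption, part (2) of Cohen's Structure Theorem (Proposition \ref{Prop_CohenStructureTheorem}) furnishes a coefficient field $F\subseteq\mathcal{O}_1$ with $F\xrightarrow{\sim} k_1$ via the residue map.

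Now choose any uniformizer $t_n\in\mathfrak{m}$ of $\mathcal{O}_1$; this is a system of parameters for the regular local ring $\mathcal{O}_1$ of dimension $1$. Part (3) of Cohen's Structure Theorem then yields an isomorphism
\[
F[[t_n]]\xrightarrow{\sim}\mathcal{O}_1,
\]
and passing to fraction fields gives $K\cong F((t_n))$. Applying the inductive hypothesis to $k_1\cong F$ (an $(n-1)$-local field satisfying the same equicharacteristic assumption with last residue field $k$) produces a non-canonical isomorphism $F\cong k((t_1))\cdots((t_{n-1}))$. Composing, we obtain the desired
\[
K\cong k((t_1))\cdots((t_{n-1}))((t_n)).
\]

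There is no real obstacle: all the analytic and algebraic content is absorbed into Cohen's Structure Theorem, and the induction is a matter of bookkeeping. The only points requiring slight care are (i) checking that the equicharacteristic hypothesis propagates to $k_1$ so that Cohen's theorem applies at each stage, and (ii) tracking that the non-canonicity at each step (choice of coefficient field $F$ and choice of uniformizer $t_n$) does not obstruct the induction, which it does not, since the isomorphism is only asserted up to non-canonical choices.
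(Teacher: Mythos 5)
Your proof is correct and is precisely the "easy induction" the paper alludes to when it calls the proposition an immediate consequence of Cohen's Structure Theorem: the base case is the definition of a $0$-local field, and the inductive step uses a coefficient field together with $F[[t_n]]\xrightarrow{\sim}\mathcal{O}_1$ and passage to fraction fields. No discrepancy with the paper's (sketched) argument.
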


If the characteristic is allowed to change, the classification of $n$-local
fields is significantly richer. We refer the reader to \cite[Ch. II,
\S 5]{MR1915966} for the structure theory of complete discrete valuation
fields, going well beyond the amount needed here. For the $n$-local field
case, see \cite{MR1804916}, \cite{MR2388492}, \cite[\S 0, Theorem]{MR1363290}
or \cite{MorrowHLF}. For our purposes here, the above version is sufficient.

\subsection{Systems of liftings}

Suppose $K$ is a complete discrete valuation field with ring of integers
$\mathcal{O}$ and residue field $\kappa:=\mathcal{O}/\mathfrak{m}$. By Cohen's
Structure Theorem, if $\operatorname*{char}(K)=\operatorname*{char}(\kappa)$,
there exists a coefficient field $F\hookrightarrow\mathcal{O}$, in other
words, the quotient map to the residue field%
\[
\mathcal{O}\twoheadrightarrow\kappa
\]
admits a section in the category of rings.

\begin{example}
\label{example_ManyCoeffFields}Such a section is usually very far from unique.
Consider $K=k(s)((t))$, a $1$-local field with last residue field $k(s)$. Take
any element in the maximal ideal $\alpha\in t\cdot k(s)[[t]]$. Then%
\[
k(s)\rightarrow k(s)((t))\text{,}\qquad s\mapsto s+\alpha
\]
defines a coefficient field. These are different whenever different $\alpha$
are chosen. Yekutieli has a much more elaborate version of this construction,
producing an enormous amount of coefficient fields for the $2$-local field
$k((s))((t))$ with $\operatorname*{char}(k)=0$. See Example
\ref{example_Yekutieli} or \cite[Ex. 2.1.22]{MR1213064}.
\end{example}

\begin{example}
Suppose $K$ is an equicharacteristic complete discrete valuation field. If and
only if the residue field is either (1) an algebraic extension of $\mathbf{Q}%
$, or (2) a perfect field of positive characteristic, then there is only one
possible choice for the coefficient field \cite[Ch. II \S 5.2-\S 5.4]%
{MR1915966}. In all other cases there will be a multitude of coefficient fields.
\end{example}

There is a straightforward extension of the concept of a coefficient field to
$n$-local fields.

\begin{definition}
\label{def_AlgSysOfLiftings}Let $K$ be an $n$-local field. An \emph{algebraic
system of liftings} $(\sigma_{1},\ldots,\sigma_{n})$ is a collection of ring
homomorphisms%
\[
\sigma_{i}:k_{i}\rightarrow\mathcal{O}_{i}%
\]
which are sections to the residue field quotient maps $\mathcal{O}%
_{i}\twoheadrightarrow k_{i}$.
\end{definition}

This concept appears for example in \cite[\S 1, p. 112]{MR726423},
\cite{MR1363290}, \cite{MR3317764}.

\begin{example}
[Madunts,\ Zhukov]\label{example_MaduntsZhukovLift}By Example
\ref{example_ManyCoeffFields}, an $n$-local field will surely have many
systems of liftings if $n\geq2$, and possibly as well if $n=1$, depending on
the last residue field. Still, if the last residue field is a finite field,
and we choose uniformizers $t_{1},\ldots,t_{n}$ for the rings of integers
$\mathcal{O}_{1},\ldots,\mathcal{O}_{n}$, Madunts and Zhukov \cite[\S 1]%
{MR1363290} isolate a distinguished, canonical, system of liftings
$h_{t_{1},\ldots,t_{n}}$ for all $n$-local fields which are either (1)
equicharacteristic $(p,\ldots,p)$ with $p>0$ some prime, or (2) mixed
characteristic $(0,p,\ldots,p)$ for some prime. This construction does not
work for example for $k((t_{1}))\cdots((t_{n}))$ with $\operatorname*{char}%
(k)=0$, or the $2$-local field $\mathbf{Q}_{p}((t))$ of characteristic
$(0,0,p)$. See \cite[\S 1.3]{MR1804916} for a survey. These liftings depend on
the choice of $t_{1},\ldots,t_{n}$.
\end{example}

\subsection{Minimal higher topology}

The na\"{\i}ve topology comes with a major drawback: Already for the multiple
Laurent series field $k((t_{1}))\cdots((t_{n}))$ the formal series notation%
\[
\sum a_{i_{1}\ldots i_{n}}t_{1}^{i_{1}}t_{2}^{i_{2}}\cdots t_{n}^{i_{n}}%
\]
of an arbitrary element is usually \textit{not} convergent in the topology
once $n\geq2$. The problem is that the topology is only made from the top
valuation, sensitive to the exponent of $t_{n}$, but gives the first residue
field $-$ when viewed as a sub-field $-$ the discrete topology. Also, the
algebraic quotient maps $\mathcal{O}_{i}\twoheadrightarrow k_{i}$ are not
topological quotient maps, i.e. they do not induce the quotient topology on
$k_{i}$. The Laurent polynomials $k[t_{1}^{\pm1},\ldots,t_{n}^{\pm}]$ are not
dense for $n\geq2$. This is a new phenomenon and complication in the case
$n\geq2$, which cannot be seen in the classical theory for $n=1$. Dealing with
this type of behaviour required some new ideas, and Parshin proposed to equip
$n$-local fields with a different topology \cite[p. 145, bottom]{MR752939}.

\begin{example}
[Parshin]\label{example_TopRingIsImpossible}There is a strong limitation to
the properties a reasonable topology on $K:=k((t_{1}))((t_{2}))$ can have, in
the shape of the following obstruction: Assume $\mathcal{T}$ is any topology
making the additive group $(K;+)$ a topological group and such that the
quotient topology induced from%
\begin{equation}
\mathcal{O}_{1}\twoheadrightarrow\mathcal{O}_{1}/\mathfrak{m}\text{,}%
\qquad\text{i.e.}\qquad k((t_{1}))[[t_{2}]]\twoheadrightarrow k((t_{1}%
))\label{lwaw1}%
\end{equation}
equips $k((t_{1}))$ precisely with the na\"{\i}ve topology. Then $K$ is
\textsl{not} a topological ring in this topology \cite[Remark 1 on p.
147]{MR752939}: Suppose it were. The map in Equation \ref{lwaw1} is continuous
by assumption, so the subsets $U+t_{2}k[[t_{2}]]$ are open in $k((t_{1}%
))[[t_{2}]]$ if and only if $U\subseteq k((t_{1}))$ is open. As multiplication
with powers of $t_{2}$ would be continuous, this enforces the following: For
$(U_{i})_{i\in\mathbf{Z}}$ a sequence of open neighbourhoods of the identity
in $k((t_{1}))$ such that $U_{i}=k((t_{1}))$ for all sufficiently large $i$,
then the sets%
\begin{equation}
V:=%
{\textstyle\sum_{i}}
U_{i}t_{2}^{i}\subseteq K\label{lwaw2}%
\end{equation}
must be open. These are finite sums of $t_{2}$-translates of sets we already
know must be open. The following figure illustrates the nature of these open
sets; the shaded range symbolizes those exponents $(i_{1},i_{2})$ whose
monomials $t_{1}^{i_{1}}t_{2}^{i_{2}}$ are allowed to carry a non-zero
coefficient:%
\[%
{\includegraphics[
height=1.0081in,
width=2.209in
]%
{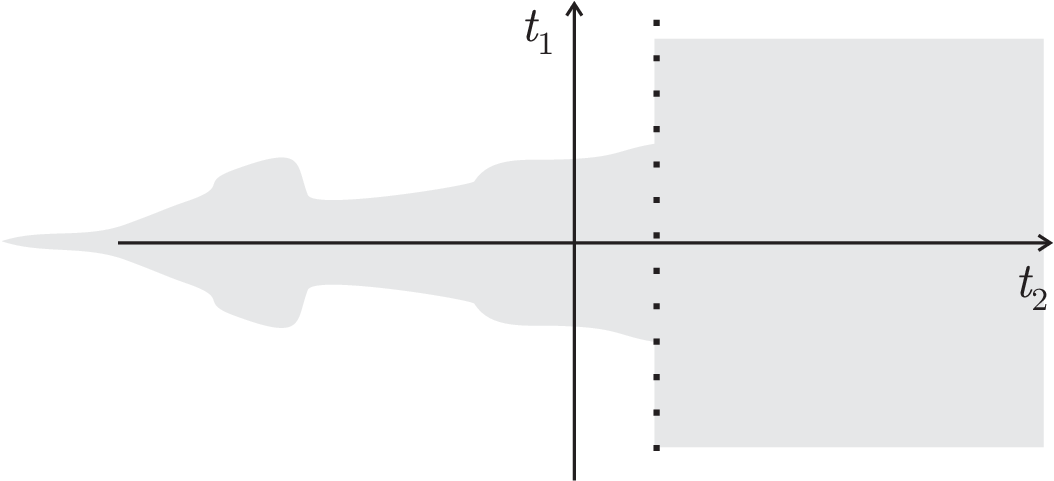}%
}
\]
This continues ad infinitum to the left; perhaps thinning out but never
terminating. The dotted line marks the index such that $U_{i}=k((t_{1}))$ for
all larger $i$. Now we observe that $V\cdot V=K$ is the entire field (under
multiplication the condition $U_{i}=k((t_{1}))$ for large $i$ compensates that
the open neighbourhoods may thin out to the left). Thus, if multiplication
$K\times K\rightarrow K$ were continuous, the pre-image of some open $U\subset
K$ would have to be open, thus contain some diagonal Cartesian open $V\times
V$, but we just saw that multiplication maps this to all of $K$. See
\cite{MR1804916}, \cite{MR1850194} for a further analysis. For example, this
observation extends to show that the multiplicative group $K^{\times}$ of an
$n$-local field cannot be a topological group for $n\geq3 $ \cite{MR1804933}.
\end{example}

It appears that the consensus of the practitioners in the field is that it is
better to have a reasonable topology than insisting on working with
topological rings, which carry an almost meaningless topology. Parshin
\cite{MR752939} then developed the theory by taking the open sets of the shape
in Equation \ref{lwaw2} as the general definition of a topology for the field
$F((t))$:\ If the additive group $(F;+)$ is equipped with a topological group
structure, generate an additive group topology on $F((t))$ from the sets
$V_{(U_{i})}$ of the shape%
\[
V_{(U_{i})}:=%
{\textstyle\sum_{i}}
U_{i}t^{i}\subseteq F((t))
\]
for $(U_{i})_{i\in\mathbf{Z}}$ open neighbourhoods of the identity in $F$ and
$U_{i}=F$ for $i$ large enough. This is explained in more detail in
\cite[\S 1]{MR1363290}, \cite{MR1804916}. Giving $k$ the discrete topology,
this inductively equips $k((t_{1}))\cdots((t_{n}))$ with a canonical topology.
We call it Parshin's\emph{\ natural topology} (there does not appear to be a
standard name in the literature; e.g. Abrashkin and his students call it the
`$P$-topology' \cite[\S 1.2]{MR2357687}). For $n\geq2$, the natural topology
has quite different opens than the na\"{\i}ve topology.

If $K$ is an equicharacteristic $n$-local field with last residue field $k$,
Prop. \ref{prop_CohenStructureThmForEquicharNLocalFields} provides an
isomorphism $\phi$ to such a multiple Laurent series field:%
\[
K\underset{\phi}{\overset{\sim}{\longrightarrow}}k((t_{1}))((t_{2}%
))\cdots((t_{n}))\text{.}%
\]
Sadly, as was discovered by Yekutieli in 1992 (see Example
\ref{example_Yekutieli} below), the induced topology usually \textsl{depends}
on the choice of the isomorphism. That means, switching to a different $\phi$
will frequently equip $K$ with a truly different topology. We shall return to
this crucial issue in \S \ref{subsect_YekutieliSTRings}.

\begin{example}
[Madunts, Zhukov]The situation is slightly better if we are in the situation
of Example \ref{example_MaduntsZhukovLift}. If $K$ is an $n$-local field,
equicharacteristic $(p,\ldots,p)$ with $p>0$, and the last residue field is
finite, Madunts and Zhukov define a topology (extending Parshin's natural
topology) based on their canonical lift $h_{t_{1},\ldots,t_{n}}$, cf. Example
\ref{example_MaduntsZhukovLift}, and in a second step prove that the topology
is independent of the choice of $t_{1},\ldots,t_{n}$ \cite[Thm. 1.3]%
{MR1363290}. This also works for $n$-local fields of characteristic
$(0,p,\ldots,p)$ and finite last residue field. Such a construction is not
available for example for $k((t_{1}))\cdots((t_{n}))$ with
$\operatorname*{char}(k)=0$. In fact, Example \ref{example_Yekutieli}, due to
A. Yekutieli, shows that no such generalization can possibly exist.
\end{example}

Before we continue this line of thought, we discuss a further development of
the natural topology:

\subsection{\label{subsect_SeqSpaces}Sequential spaces}

Working with the natural topology, at least multiplication by a \textit{fixed}
element from the left or right are continuous, and one has
\[
x_{n}\longrightarrow x\text{, }y_{n}\longrightarrow y\qquad\Longrightarrow
\qquad x_{n}\cdot y_{n}\longrightarrow x\cdot y\text{,}%
\]
i.e. the multiplication is continuous if one only tests it on sequences.
Following this lead, Fesenko modified the natural topology into a new one in
which continuity is detected by sequential continuity alone. We sketch the
implications of this:

We recall that a subset $Z\subset X$ of a topological space $X$ is called
\emph{sequentially closed} if for every sequence $(x_{n})$ with $x_{n}\in Z$,
convergent in $X$, the limit $\lim\nolimits_{n}x_{n}$ also lies in $Z$.

\begin{definition}
[Franklin]A topological space is called \emph{sequential} if a subset is
closed iff it is sequentially closed.
\end{definition}

Franklin shows that equivalently sequential spaces are those spaces which
arise as quotients of metric spaces \cite[(1.14) Corollary]{MR0180954}. The
inclusion admits a right adjoint, called \emph{sequential saturation},%
\[
\mathsf{Top}_{\operatorname{seq}}\overset{sat}{\leftrightarrows}\mathsf{Top}%
\]
between the category $\mathsf{Top}$ (resp. $\mathsf{Top}_{\operatorname{seq}}%
$) of all (resp. sequential) topological spaces.

\begin{definition}
[Fesenko]The\emph{\ saturation topology} on $k((t_{1}))\cdots((t_{n}))$ is the
sequential saturation of the natural topology. \cite{MR1850194}.
\end{definition}

This topology has many more open sets than the natural topology in general
(see \cite[(2.2) Remark]{MR1850194} for an explicit example), but a sequence
is convergent in the saturation topology if and only if it converges in the
natural topology. This is no contradiction since these topologies do not admit
countable neighbourhood bases. Example \ref{example_TopRingIsImpossible}
implies that we still cannot have a topological ring. However, we get
something like a `sequential topological ring'. But this really is a
completely different notion than a topological ring because ring objects in
sequential spaces are not compatible with ring objects in topological spaces
by the following example:

\begin{example}
[Dudley, Franklin]The categories $\mathsf{Top}_{\operatorname{seq}}$ and
$\mathsf{Top}$ have products, but they do not agree, i.e.%
\[
(X_{sat}\times_{\mathsf{Top}}Y_{sat})\neq(X\times_{\mathsf{Top}}%
Y)_{sat}\text{.}%
\]
Explicit examples were given independently by Dudley and Franklin. See
\cite[Example 1.11]{MR0180954} for the latter. We refer to \cite{MR0180954},
\cite{MR0222832} for a detailed study.
\end{example}

\begin{remark}
For $n=1$, the na\"{\i}ve, natural and saturation topology on $k((t))$ all agree.
\end{remark}

\begin{remark}
Analogously to the case of higher local fields, the ad\`{e}les of a scheme can
also be equipped with sequential topologies \cite{MR2658047}, \cite{FesRR}.
\end{remark}

\begin{remark}
A detailed exposition and elaboration on the notions of sequential groups and
rings was given by A. C\'{a}mara \cite[\S 1]{camaratopology}. He also studies
a further topological approach. In \cite{MR3161556}, \cite{MR3227342} he shows
that $n$-local fields can also be viewed as locally convex topological vector
spaces if one fixes a suitable embedding of a local field, serving as the
`field of scalars'. The interested reader should consult A. C\'{a}mara for
further information, much of which is not available in published form.
\end{remark}

\subsection{Kato's ind-pro approach\label{subsect_KatoIndProApproach}}

Kato \cite[\S 1]{MR726423} proposed that the concept of topology might in
general not be the right framework to think about continuity in higher local
fields. In the introduction to \cite{MR1804933} he proposes very clearly to
abandon the idea of topology entirely, in favour of promoting the ind-pro
structure of higher local fields, e.g. as in%
\[
k((t))=\underset{i}{\underrightarrow{\operatorname*{colim}}}\underset
{j}{\underleftarrow{\lim}}\,t^{-i}k[[t]]/t^{j}\text{,}%
\]
to the essential datum. We note that this presentation of $k((t))$ is, in the
category of linear topological vector spaces, inducing the na\"{\i}ve
topology. Thus, the ind-pro perspective is another possible starting point to
find a good generalization of continuity to higher local fields.

Instead of just working with vector spaces, such an ind-pro viewpoint makes
sense for objects in almost any category. Let $\mathcal{C}$ be an exact
category, e.g. an abelian category. Then there is a category $\mathsf{Ind}%
_{\kappa}^{a}(\mathcal{C})$ of admissible Ind-objects (of cardinality
$\leq\kappa$), e.g. encoding objects defined by an inductive system%
\[
C_{1}\hookrightarrow C_{2}\hookrightarrow C_{3}\hookrightarrow\cdots
\]
with $C_{i}\in\mathcal{C}$ and admissible monics as transition morphisms.
Additionally, more complicated defining diagrams can be allowed. A precise
definition and construction is given in\ Keller \cite[Appendix B]{MR1052551}
or in greater generality \cite[\S 3]{TateObjectsExactCats}. Following Keller's
ideas, $\mathsf{Ind}_{\kappa}^{a}(\mathcal{C})$ is again an exact category and
an analogous formalism exists for Pro-objects, $\mathsf{Pro}_{\kappa}%
^{a}(\mathcal{C})$. See also Previdi \cite{MR2872533}. We shall frequently
drop the cardinality $\kappa$ from the notation for the sake of legibility.
These categories sit in a commutative square of inclusion functors%
\[%
\bfig\Square(0,0)[\mathcal{C}`\mathsf{Ind}^{a}\mathcal{C}`\mathsf{Pro}%
^{a}\mathcal{C}`\mathsf{Ind}^{a}\mathsf{Pro}^{a}\mathcal{C}\text{.};```]
\efig
\]
One may now replace $\mathsf{Ind}^{a}\mathsf{Pro}^{a}(\mathcal{C})$ by the
smallest sub-category still containing $\mathsf{Ind}^{a}(\mathcal{C})$ and
$\mathsf{Pro}^{a}(\mathcal{C})$, but also being closed under extensions. This
is again an exact category, called the category of \emph{elementary Tate
objects}, $\mathsf{Tate}^{el}(\mathcal{C})$ \cite{MR2872533},
\cite{TateObjectsExactCats}.

\begin{example}
\label{example_QpAsIndPro}Let $\mathsf{Ab}_{fin}$ be the abelian category of
finite abelian groups. In the category of all abelian groups $\mathsf{Ab}$ we
have%
\[
\mathbf{Q}_{p}=\underset{i}{\underrightarrow{\operatorname*{colim}}}%
\underset{j}{\underleftarrow{\lim}}\frac{1}{p^{i}}\mathbf{Z}/p^{j}%
\mathbf{Z}\text{,}\qquad\text{where}\qquad\frac{1}{p^{i}}\mathbf{Z}%
/p^{j}\mathbf{Z}\in\mathsf{Ab}_{fin}\text{.}%
\]
Instead of regarding this colimit/limit inside the category $\mathsf{Ab}$, we
could read the inner limit as a diagram $J_{i}:\mathbf{N}\rightarrow
\mathsf{Ab}_{fin}$, $j\mapsto\frac{1}{p^{i}}\mathbf{Z}/p^{j}\mathbf{Z}$,
defining an object in $\mathsf{Pro}^{a}(\mathsf{Ab}_{fin})$, and using the
dependency on $i$ we get a diagram $I:\mathbf{N}\rightarrow\mathsf{Pro}%
^{a}(\mathsf{Ab}_{fin})$, $i\mapsto\lbrack(J_{i})]$ of Pro-objects.
Considering the object defined by this diagram, we get an object
$I\in\mathsf{Ind}^{a}\mathsf{Pro}^{a}(\mathsf{Ab}_{fin})$. One can easily
check that it actually lies in $\mathsf{Tate}^{el}(\mathsf{Ab}_{fin})$, see
Definition \ref{Def_TateLattice} below. One can also define a functor
$\mathsf{Tate}^{el}(\mathsf{Ab}_{fin})\rightarrow\mathsf{Ab}$ which, using
that $\mathsf{Ab}$ is complete and co-complete, evaluates the Ind-Pro-object
described by these diagrams. This yields $\mathbf{Q}_{p}\in\mathsf{Ab}$ as
before. See \cite{TateObjectsExactCats} for more background. More examples
along these lines can be found in \cite{bgwTateModule}.
\end{example}

Kapranov made the justification of Kato's idea \cite{MR1804933} very precise:

\begin{example}
[Kapranov \cite{KapranovSemiInfinite}, \cite{MR1800352}]%
\label{example_Kapranov}If $\mathcal{C}:=\mathsf{Vect}_{f}(\mathbf{F}_{q})$ is
the abelian category of finite-dimensional $\mathbf{F}_{q}$-vector spaces,
$q=p^{n}$, Kapranov proved that there is an equivalence of categories
$\mathsf{Tate}^{el}(\mathcal{C})\overset{\sim}{\rightarrow}\mathsf{LT}$, where
$\mathsf{LT}$ is the category of linearly locally compact topological
$\mathbf{F}_{q}$-vector spaces \cite{MR0007093}, \cite{KapranovSemiInfinite}.
Every equicharacteristic $1$-local field with last residue field
$\mathbf{F}_{q}$ and equipped with the na\"{\i}ve topology is an object of
$\mathsf{LT}$. One can extend this example and interpret any $1$-local field
with last residue field $\mathbf{F}_{q}$ as an object of $\mathsf{Tate}%
^{el}(\mathcal{C})$ for $\mathcal{C}$ the category of finite abelian
$p$-groups, e.g. as in Example \ref{example_QpAsIndPro}.
\end{example}

The category $\mathsf{Tate}^{el}(\mathcal{C})$ can be described as those
objects $V\in\mathsf{Ind}^{a}\mathsf{Pro}^{a}(\mathcal{C})$ which admit an
exact sequence%
\begin{equation}
L\hookrightarrow V\twoheadrightarrow V/L\label{lccab11}%
\end{equation}
so that $L\in\mathsf{Pro}^{a}(\mathcal{C})$ and $V/L\in\mathsf{Ind}%
^{a}(\mathcal{C})$.

\begin{definition}
\label{Def_TateLattice}Any $L$ appearing in such an exact sequence will be
called a \emph{(Tate) lattice} in $V$.
\end{definition}

So Tate objects are those Ind-Pro-objects admitting a lattice. A category of
this nature was first defined by Kato \cite{MR1804933} in the 1980s (the
manuscript was published only much later), but without an exact category
structure, and independently by Beilinson \cite{MR923134} for a completely
different purpose $-$ Previdi proved the equivalence between Beilinson's and
Kato's approaches \cite{MR2872533}.

\begin{remark}
It is shown in \cite[Thm. 6.7]{TateObjectsExactCats} that for idempotent
complete $\mathcal{C}$, any finite set of lattices has a common sub-lattice
and a common over-lattice. This can vaguely be interpreted as counterparts of
the statement that finite unions and intersections of opens in a topological
space should still be open.
\end{remark}

Following Kato, this suggests to replace the topologically-minded category
$\mathsf{LT}$ (of Example \ref{example_Kapranov}) by $\mathsf{Tate}%
^{el}(\mathcal{C})$, and for example a $2$-local field over $\mathbf{F}_{q}$
should be viewed as something like%
\begin{equation}
\mathbf{F}_{q}((t_{1}))((t_{2}))\quad\in\quad\mathsf{Tate}^{el}(\mathsf{Tate}%
^{el}(\mathsf{Vect}_{f}))\text{.}\label{lwwax1}%
\end{equation}
Instead of concatenating lengthy expressions, we shall call this a `$2$-Tate
object' and more generally define the following:

\begin{definition}
Let $\mathcal{C}$ be an arbitrary exact category. Define $\left.
1\text{-}\mathsf{Tate}^{el}(\mathcal{C})\right.  :=\mathsf{Tate}%
^{el}(\mathcal{C})$, and $\left.  n\text{-}\mathsf{Tate}^{el}(\mathcal{C}%
)\right.  :=\mathsf{Tate}^{el}(\,(n-1)$-$\mathsf{Tate}(\mathcal{C})\,)$ and
$\left.  n\text{-}\mathsf{Tate}(\mathcal{C})\right.  $ as the idempotent
completion of the category $\left.  n\text{-}\mathsf{Tate}^{el}(\mathcal{C}%
)\right.  $. Objects in $n$-$\mathsf{Tate}(\mathcal{C})$ will be called
$n$\emph{-Tate objects}. \cite[\S 7]{TateObjectsExactCats}
\end{definition}

The slightly complicating presence of idempotent completions in this
definition makes the categories substantially nicer to work with. See
\cite{bgwTateModule} for many instances of this effect.

\begin{example}
[Kato]\label{example_KatoIndProOfLaurentSeries}Kato \cite[\S 1]{MR726423}
equips an $n$-local field $K$ along with a fixed algebraic system of liftings,
Definition \ref{def_AlgSysOfLiftings}, with the structure of an $n$-Tate
object in finite abelian groups. The definition depends on the system of
liftings. See \cite[\S 1.2]{MR1804933} for a detailed exposition. For multiple
Laurent series we can use%
\begin{align*}
& \text{\textquotedblleft}k((t_{1}))((t_{2}))\ldots((t_{n}%
))\text{\textquotedblright}\\
& \qquad\qquad=\underset{i_{n}}{\underrightarrow{\operatorname*{colim}}%
}\underset{j_{n}}{\underleftarrow{\lim}}\cdots\underset{i_{1}}%
{\underrightarrow{\operatorname*{colim}}}\underset{j_{1}}{\underleftarrow
{\lim}}\,\frac{1}{t_{1}^{i_{1}}\cdots t_{n}^{i_{n}}}k[t_{1},\ldots
,t_{n}]/(t_{1}^{j_{1}},\ldots,t_{n}^{j_{n}})\text{.}%
\end{align*}

\end{example}

\begin{example}
[Osipov]In the case of $\mathcal{C}:=\mathsf{Vect}_{f}$ a closely related
alternative model for $n$-Tate objects are the $C_{n}$-categories of Denis
Osipov \cite{MR2314612}. There is also a variant for $\mathcal{C}%
:=\mathsf{Ab}$ or including some abelian real Lie groups, the categories
$C_{n}^{\operatorname*{fin}}$ or $C_{n}^{\operatorname*{ar}}$ of
\cite{MR2866188}.
\end{example}

Kato's approach differs quite radically from the others. Since the concept of
a topology is not used at all, it seems at first sight very unclear how one
could even formulate any sort of `comparison' between the ind-pro versus
topological viewpoint.

\subsection{Yekutieli's ST rings\label{subsect_YekutieliSTRings}}

Yekutieli's approach, first introduced in \cite{MR1213064}, uses topology
again. However, instead of just looking at fields, he directly formulates an
appropriate weakening of the concept of a topological ring for quite general
(even non-commutative) rings.

For the moment, let $k$ be any ring and it will tacitly be understood as a
topological ring with the discrete topology. Yekutieli works with his notion
of \emph{semi-topological rings} (\emph{ST\ rings}): An ST ring is a
$k$-algebra $R$ along with a $k$-linear topology on its underlying $k$-module
such that for any given $r\in R$ both one-sided multiplication maps%
\[
(r\cdot-):R\longrightarrow R\qquad\text{and}\qquad(-\cdot r):R\longrightarrow
R
\]
are continuous. We follow his notation and write $\mathsf{STRing}(k)$ for this
category. Morphisms are continuous $k$-algebra homomorphisms. See
\cite[\S 1]{MR3317764} for a review of the theory. The material is developed
in full detail in \cite[Chapter 1]{MR1213064}.

\begin{example}
[C\'{a}mara]The left- and right-continuity is also a feature of both the
natural and the saturation topology. In particular $k((t_{1}))\cdots((t_{n}))$
with the natural topology lies in $\mathsf{STRing}(k)$. By a result of
C\'{a}mara, this is no longer true for the saturation topology. In more
detail: The topology on Yekutieli's ST\ rings is always linear, i.e. admits an
open neighbourhood basis made from additive sub-groups/or sub-modules.
C\'{a}mara's theorem \cite[Theorem 2.9 and Corollary]{camaratopology} shows
that the saturation topology from \S \ref{subsect_SeqSpaces} is not a linear
topology. For a $2$-local field he shows that if one takes the topology
generated only from those saturation topology opens which are simultaneously
sub-groups, one recovers the natural topology.
\end{example}

Similarly, an ST\ module $M$ is an $R$-module along with a linear topology on
its additive group such that for any given $r\in R$ and $m\in M$ the maps%
\[
(r\cdot-):M\longrightarrow M\qquad\text{and}\qquad(-\cdot m):R\longrightarrow
M
\]
are continuous. This additive $k$-linear category is denoted by
$\mathsf{STMod}(R)$. Yekutieli already points out that this category is not
abelian. Although he does not phrase it this way, his results also imply that
the situation is not too bad either:

\begin{proposition}
\label{prop_STmodsQuasiAbelian}For any ST ring $R$, the category
$\mathsf{STMod}(R)$ is quasi-abelian in the sense of J.-P. Schneiders
\cite{MR1779315}.
\end{proposition}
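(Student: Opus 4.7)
The plan is to verify Schneiders' definition: $\mathsf{STMod}(R)$ must be additive, have all kernels and cokernels, and the class of strict monomorphisms (resp.\ epimorphisms) must be stable under pushout (resp.\ pullback). Additivity is immediate: the zero module is a zero object, and the biproduct $M \oplus N$ is the algebraic direct sum equipped with the product topology, whose linearity and continuity of one-sided multiplications follow componentwise. For a morphism $f\colon M \to N$, the kernel is $\ker(f) \subseteq M$ equipped with the subspace topology and the cokernel is $N/f(M)$ equipped with the quotient topology; in both cases linearity of the topology and continuity of the $R$-action are inherited via the universal properties of the subspace and quotient constructions.

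Unwinding the definitions, a strict monomorphism is precisely an injection that is a topological embedding onto its image, and a strict epimorphism is a surjection under which the codomain carries the quotient topology. In particular, a strict epi is automatically an open map, because the quotient map of a topological abelian group by a subgroup is always open. For stability of strict epis under pullback, given a strict epi $f\colon N \to N'$ and any $g\colon M' \to N'$, one forms the algebraic pullback $P := N \times_{N'} M'$ with the subspace topology from $N \oplus M'$, and checks that the projection $\pi\colon P \to M'$ is an open surjection via the identity
\[
\pi\bigl((V_1 \oplus V_2) \cap P\bigr) \;=\; V_2 \cap g^{-1}(f(V_1)),
\]
valid for open submodules $V_1 \subseteq N$ and $V_2 \subseteq M'$; the right-hand side is open in $M'$ by openness of $f$ and continuity of $g$.

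For stability of strict monos under pushout, given a strict mono $f\colon M \hookrightarrow N$ and any $g\colon M \to M'$, form $P := (N \oplus M')/\Gamma$ with the quotient topology, where $\Gamma := \{(f(m), -g(m)) : m \in M\}$. The induced map $i\colon M' \to P$ sending $m' \mapsto [(0, m')]$ is injective, and to verify the topological embedding property I would argue as follows: given an open submodule $U \subseteq M'$, the preimage $g^{-1}(U)$ is open in $M$; since $f$ is a strict mono and the topology on $N$ is linear, one can find an open submodule $V_1 \subseteq N$ with $f^{-1}(V_1) \subseteq g^{-1}(U)$. The image $V$ of $V_1 \oplus U$ in $P$ is then an open submodule, and a direct computation shows $i^{-1}(V) = U + g(f^{-1}(V_1)) = U$. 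The main technical obstacle lies precisely here: without the linearity of the topology (which allows one to refine any open down to an open \emph{sub-module}) the naive argument would only produce $U + g(f^{-1}(V_1))$ of some uncontrolled size. The remaining verifications that the $R$-actions remain continuous on the constructed pullbacks and pushouts are routine applications of the universal properties of subspace and quotient topologies.
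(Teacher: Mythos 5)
Your proof is correct and takes essentially the same route as the paper, which also just verifies Schneiders' axioms directly (citing Yekutieli for additivity, kernels and cokernels, and calling the stability of strict monics under pushout and strict epics under pullback ``immediate''); you simply spell out the explicit subspace/quotient constructions and the two openness/embedding computations that the paper leaves unwritten. One minor remark: linearity of the topology is convenient but not strictly indispensable in your pushout step, since one could instead first shrink $U$ to a symmetric neighbourhood $U_1$ with $U_1+U_1\subseteq U$; as written, though, your use of open submodules is perfectly valid in $\mathsf{STMod}(R)$.
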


\begin{proof}
Yekutieli already shows in \cite[Chapter 1]{MR1213064} that the category is
additive and has all kernels and cokernels. So one only has to check that
pushouts preserve strict monics and pullbacks preserve strict epics. These
verifications are immediate.
\end{proof}

We get a functor to ordinary modules by forgetting the topology and Yekutieli
shows \cite[\S 1.2 and Prop. 1.2.4]{MR1213064} that it has a left adjoint
\[
\mathsf{STMod}(R)\underset{\mathrm{forget}}{\overset{\mathrm{fine}%
}{\leftrightarrows}}\mathsf{Mod}(R)\text{,}%
\]
where `$\mathrm{fine}$' equips an $R$-module $M$ with the so-called \emph{fine
ST topology}, the finest linear topology such that $M$ is an ST module at all
(it exists by \cite[Lemma 1.1.1]{MR1213064}). Being a left adjoint,
`$\mathrm{fine}$' commutes with colimits.

\begin{example}
[Yekutieli]\label{example_YekutieliLaurentSTRingStructure}Yekutieli defines an
ST\ ring structure on multiple Laurent series:%
\begin{equation}
k((t_{1}))((t_{2}))\cdots((t_{n}))\in\mathsf{STRing}(k)\text{.}\label{lwawwx1}%
\end{equation}
His construction is as follows: Write it as%
\[
\underset{i_{n}}{\underrightarrow{\operatorname*{colim}}}\underset{j_{n}%
}{\underleftarrow{\lim}}\cdots\underset{i_{1}}{\underrightarrow
{\operatorname*{colim}}}\underset{j_{1}}{\underleftarrow{\lim}}\,\frac
{1}{t_{1}^{i_{1}}\cdots t_{n}^{i_{n}}}k[t_{1},\ldots,t_{n}]/(t_{1}^{j_{1}%
},\ldots,t_{n}^{j_{n}})
\]
and (1) equip the inner term with the fine ST $k$-module topology, (2) for the
limits use that the inverse limit linear topology of ST topologies is again an
ST topology \cite[Lemma 1.2.19]{MR1213064}, (3) the colimits are
localizations, equip them with the fine topology over the ring we are
localizing; this makes them ST rings again \cite[Prop. 1.2.9]{MR1213064}. See
\cite[Def. 1.17 and Def. 3.7]{MR3317764} for the details.
\end{example}

Semi-topological rings ultimately remain a very subtle working ground. On the
one hand, they behave very well with respect to many natural questions (e.g.
Yekutieli develops inner Homs, shows a type of Matlis duality, etc., see
\cite{MR1213064}, \cite{MR1352568}). On the other hand, just as for sequential
spaces, \S \ref{subsect_SeqSpaces}, harmless looking constructions can fail
badly, e.g. \cite[Remark 1.29]{MR3317764}.

\begin{example}
[Yekutieli]\label{example_Yekutieli}In \cite[Ex. 2.1.22]{MR1213064} Yekutieli
exhibited an example greatly clarifying the problem underlying the search for
a canonical topology on $n$-local fields. A detailed exposition is given in
\cite[Ex. 3.13]{MR3317764}. We sketch the construction since we shall need to
refer to some of its ingredients later: Suppose $\operatorname*{char}(k)=0$
and let $\{b_{i}\}_{i\in I}$ be a transcendence basis for $k((t_{1}))/k$ with
$b_{i}\in k[[t_{1}]]$; such exists since if $b_{i}\notin k[[t_{1}]]$, replace
it by $b_{i}^{-1}$. The index set $I$ will necessarily be infinite. Then for
any choice of elements $c_{i}\in k((t_{1}))[[t_{2}]]$, $i\in I$, Yekutieli
constructs a map%
\begin{align}
\sigma:k((t_{1}))  & \longrightarrow k((t_{1}))[[t_{2}]]\label{lwwax3}\\
b_{i}  & \longmapsto b_{i}+c_{i}t_{2}\text{,}\nonumber
\end{align}
which is a particular choice of a coefficient field (On the purely
$k$-transcendental sub-field%
\[
k(\{b_{i}\}_{i\in I})\longrightarrow k((t_{1}))[[t_{2}]]
\]
the existence of this map is clear right away. Lifting this morphism along the
algebraic extension $k((t_{1}))/k(\{b_{i}\}_{i\in I})$ is the subtle point and
hinges on $\operatorname*{char}(k)=0$ \cite{MR3317764}). We may assume
$b_{0}=t_{1}$ and $c_{0}=0$ for some index $0\in I$, so that $\sigma$ maps
$t_{1}$ to itself. Yekutieli shows that $\sigma$ lifts to a field automorphism
$\tilde{\sigma}$ of $k((t_{1}))((t_{2}))$ sending one such coefficient field
to another and $t_{2}$ to itself. Since the sub-field $k(t_{1},t_{2})$ is
element-wise fixed by $\tilde{\sigma}$, but is dense in the natural topology,
Fesenko's saturation topology and Yekutieli's ST\ topology, $\tilde{\sigma}$
will \textsl{not} be continuous unless all $c_{i}$ are zero. It follows that
if $K$ is an $n$-local field and%
\[
\phi:K\simeq k((t_{1}))\cdots((t_{n}))
\]
some field isomorphism $\phi$ from Prop.
\ref{prop_CohenStructureThmForEquicharNLocalFields}, the topology pulled back
from the right-hand side to $K$ depends on the choice of $\phi$, because we
could twist this map with arbitrary discontinuous automorphisms $\tilde
{\sigma}$.
\end{example}

\begin{example}
\label{example_YekutieliIndPro}We use this paper as an opportunity to unravel
a variation of Yekutieli's example in order to show that Kato's ind-pro
structure, as explained in Example \ref{example_KatoIndProOfLaurentSeries},
will also not be preserved by a random field automorphism. We assume at least
a passing familiarity with \cite{TateObjectsExactCats}. Recall that
$\mathsf{Vect}_{f}$ denotes the abelian category of finite-dimensional
$k$-vector spaces. Again, suppose $\operatorname*{char}(k)=0$. Consider
Yekutieli's map $\sigma$, as in Equation \ref{lwwax3}, and recall that we can
choose the $c_{i}$ quite arbitrarily. We will use this now: Pick any
surjective set-theoretic map $Q:I\twoheadrightarrow\mathbf{Z}$. Such a map
exists since the indexing set $I$ is infinite. We take%
\begin{equation}
c_{i}:=t_{1}^{Q(i)}\in k((t_{1}))[[t_{2}]]\label{lwwax7}%
\end{equation}
We write either side as a $2$-Tate object in finite-dimensional $k$-vector
spaces $\left.  2\text{-}\mathsf{Tate}(\mathsf{Vect}_{f})\right.  $, as in
Example \ref{example_KatoIndProOfLaurentSeries}. If $\tilde{\sigma}$ is
induced from a morphism of $2$-Tate objects, it is in particular an
automorphism of a $1$-Tate object (namely, a $1$-Tate object with values in
$1$-Tate objects, see Equation \ref{lwwax1}), namely of%
\[
\underset{i_{2}}{\underrightarrow{\operatorname*{colim}}}\underset{j_{2}%
}{\underleftarrow{\lim}}\,\underset{=:V_{i_{2},j_{2}}}{\underbrace{\frac
{1}{t_{2}^{i_{2}}}k((t_{1}))[[t_{2}]]/(t_{2}^{j_{2}})}}\text{.}%
\]
This in turn is true if and only if for every pair $(i_{2},j_{2}^{\prime})$
there exists a pair $(i_{2}^{\prime},j_{2})$ so that $\tilde{\sigma}$
restricts to%
\begin{equation}
\tilde{\sigma}\mid_{(i_{2},j_{2})}:V_{i_{2},j_{2}}\longrightarrow
V_{i_{2}^{\prime},j_{2}^{\prime}}\text{.}\label{lwwax2}%
\end{equation}
If this is the case, the converse translation is as follows: these
$\tilde{\sigma}\mid_{(i_{2},j_{2})}$, for each $i_{2}$ fixed and varying over
$j_{2} $, induce a morphism of Pro-diagrams (see \cite[\S 4.1, Def.
4.1]{TateObjectsExactCats} for a definition), and then varying over $i_{2}$
they induce a morphism of Tate diagrams, made from these Pro-diagrams (see
\cite[Def. 5.2]{TateObjectsExactCats} for a definition). This in turn gives
the desired morphism of Tate objects. Unravel Equation \ref{lwwax2} in the
case $i_{2}:=0$ and take any $j_{2}^{\prime}$ (we may imagine taking this
arbitrarily large, if we want) so that we have the existence of indices
$i_{2}^{\prime}$ and $j_{2}$ with
\begin{align}
\tilde{\sigma}\mid_{(0,j_{2})}:k((t_{1}))[[t_{2}]]/(t_{2}^{j_{2}})  &
\longrightarrow\frac{1}{t_{2}^{i_{2}^{\prime}}}k((t_{1}))[[t_{2}%
]]/(t_{2}^{j_{2}^{\prime}})\text{,}\nonumber\\
b_{i}  & \longmapsto b_{i}+t_{1}^{Q(i)}\cdot t_{2}\text{.}\label{lwwax5}%
\end{align}
The restriction of this morphism in the category $\mathsf{Tate}(\mathsf{Vect}%
_{f})$ to the lattice $k[[t_{1}]]$ becomes%
\begin{equation}
\tilde{\sigma}\mid_{(0,j_{2})}:k[[t_{1}]][[t_{2}]]/(t_{2}^{j_{2}%
})\longrightarrow\frac{1}{t_{2}^{i_{2}^{\prime}}}k((t_{1}))[[t_{2}%
]]/(t_{2}^{j_{2}^{\prime}})\text{.}\label{lwwax4}%
\end{equation}
But lattices are Pro-objects. Thus, by \cite[Prop. 5.8]{TateObjectsExactCats}
the morphism $\tilde{\sigma}\mid_{(0,j_{2})}$ factors through a Pro-subobject
$L$ of the right-hand side%
\[%
\bfig\node x(0,0)[{k[[t_{1}]]}]
\node y(1000,0)[{\frac{1}{t_{2}^{i_{2}^{\prime}}}k((t_{1}))[[t_{2}%
]]/(t_{2}^{j_{2}^{\prime}})}]
\node l(1000,500)[L]
\arrow/{>}/[x`l;]
\arrow/{^{(}->}/[l`y;]
\arrow/{>}/[x`y;\tilde{\sigma}]
\efig
\]
Alternatively one could use the following stronger fact: For a morphism of
Tate objects, morphisms originating from a lattice factor through a lattice in
the target \cite[Prop. 2.7 (1)]{bgwRelativeTateObjects}. Now, the Pro-system%
\begin{equation}
\left(  m\longmapsto\frac{1}{t_{2}^{i_{2}^{\prime}}}\frac{1}{t_{1}^{m}%
}k[[t_{1}]][[t_{2}]]/(t_{2}^{j_{2}^{\prime}})\right)  \qquad\text{in}%
\qquad\mathsf{Pro}^{a}(\mathsf{Vect}_{f})\label{lwwax6}%
\end{equation}
is a co-final system of lattices in the target, so in particular the image of
$\tilde{\sigma}\mid_{(0,j_{2})}\mid_{k[[t_{1}]]}$ as in Equation \ref{lwwax4}
would have to factor over some object in this system. As we could assume
$b_{i}\in k[[t_{1}]]$ for all $i\in I$ in Example \ref{example_Yekutieli} and
$Q$ is surjective, Equation \ref{lwwax5}%
\begin{align*}
k[[t_{1}]][[t_{2}]]/(t_{2}^{j_{2}})  & \longrightarrow\frac{1}{t_{2}%
^{i_{2}^{\prime}}}k((t_{1}))[[t_{2}]]/(t_{2}^{j_{2}^{\prime}})\\
b_{i}  & \longmapsto b_{i}+t_{1}^{Q(i)}\cdot t_{2}%
\end{align*}
produces a contradiction since arbitrarily negative powers of $t_{1}$ lie in
the image of this map, but each of the lattices in the system in Equation
\ref{lwwax6} only has $t_{1}$ powers with an overall lower bound on the
exponent. In other words: Even though $\tilde{\sigma}$ exists as a field
automorphism, there is no automorphism of $2$-Tate objects inducing it.
\end{example}

We summarize: A general field automorphism of the $2$-local field
$k((t_{1}))((t_{2}))$ for $\operatorname*{char}(k)=0$ need not preserve (1)
the natural or saturation topologies, (2) Yekutieli's ST\ topology, (3) or
Kato's $2$-Tate object structure.

We thank Denis Osipov for pointing out to us that those automorphisms which
preserve the $n$-Tate structure of Laurent series $k((t_{1}))\cdots((t_{n})) $
are also automatically continuous in all of the aforementioned topologies
\cite[Prop. 2.3, (i)]{MR2314612}. See also Example
\ref{example_PreserveTateImpliesPreserveTopology}.

\begin{remark}
[Characteristic $p>0$]Contrary to the usual intuition, the situation is much
simpler in positive characteristic $p>0$:

\begin{enumerate}
\item (Kato) Kato produces a canonical ind-pro structure. See \cite[\S 1.1,
Prop. 2 \& Example]{MR1804933}.

\item (Madunts, Zhukov) The paper \cite{MR1363290} constructs a canonical
topology, following Parshin.

\item (Yekutieli) Yekutieli proves that all field isomorphisms between
equicharacteristic $n$-local fields of positive characteristic $p>0$ must
automatically be continuous, i.e. isomorphisms in $\mathsf{STRing}(k)$
\cite[Prop. 2.1.21]{MR1213064}. This is based on a surprising idea using
differential operators. See \cite[Thm. 2.1.14 and Prop. 2.1.21]{MR1213064}.
\end{enumerate}

Despite these positive results, it still seems reasonable to approach the
uniqueness problem for the topology for arbitrary $n$-local fields without
using this work-around in positive characteristic.
\end{remark}

Example \ref{example_Yekutieli} and Example \ref{example_YekutieliIndPro}
suggest that looking at $n$-local fields per se, there are too many
automorphisms to make reasonable and especially canonical use of topological
concepts. As a result, Yekutieli proposes to rigidify the category of
$n$-local fields by choosing and fixing a topology on them. This will be an
extra datum. Working in this context, one can restrict one's attention to
those field automorphisms which are also continuous. This greatly cuts down
the size of the automorphism group: For an $n$-local field, we define the ring%
\[
O(K):=\mathcal{O}_{1}\times_{k_{1}}\mathcal{O}_{2}\times_{k_{2}}\cdots
\times_{k_{n-1}}\mathcal{O}_{n}\subset K\text{.}%
\]
It consists of those elements in $\mathcal{O}_{1}$ whose residual image lies
in $\mathcal{O}_{2}$ such that their residual image lies in $\mathcal{O}_{3}$
and so forth.

\begin{definition}
[Yekutieli]\label{def_TLF}Let $k$ be a perfect field. A \emph{topological }%
$n$\emph{-local field} (TLF) consists of the following data:

\begin{enumerate}
\item an $n$-local field $K$ as in Definition \ref{def_NLocalField},

\item a topology $\mathcal{T}$ on $K$ which makes it an ST\ ring,

\item a ring homomorphism $k\rightarrow O(K)$ such that the composition
$k\rightarrow O(K)\rightarrow k_{n}$ is a finite extension of fields;
\end{enumerate}

and we assume there exists a (non-canonical, not part of the datum) field
isomorphism%
\[
\phi:k((t_{1}))((t_{2}))\cdots((t_{n}))\overset{\sim}{\longrightarrow}K
\]
which is also an isomorphism in $\mathsf{STRing}(k)$, where the left-hand side
is equipped with the standard ST\ ring structure, as explained in Example
\ref{example_YekutieliLaurentSTRingStructure}.

A morphism of TLFs is a field morphism, which is simultaneously an ST ring
morphism and preserves the $k$-algebra structure given by (3).
\end{definition}

Any such isomorphism $\phi$ will be called a \emph{parametrization}. We wish
to stress that the parametrization is not part of the data. We only demand
that an isomorphism exists at all. See \cite[\S 3]{MR3317764},
\cite{MR1352568} for a detailed discussion of TLFs.

\begin{dangerousbend}
Despite the name, a `topological $n$-local field' is \textsl{not} a field
object (or even ring object) in the category $\mathsf{Top}$.
\end{dangerousbend}

\begin{example}
Since Yekutieli's Example \ref{example_Yekutieli} shows that a general field
automorphism $\phi$ will not be continuous in the ST ring topology, it implies
that it will not be a TLF automorphism.
\end{example}

\begin{remark}
All of these approaches to topologization not only apply to higher local
fields, but are also natural techniques to equip similar algebraic structures
with a topology, e.g. double loop Lie algebras $\mathfrak{g}((t_{1}%
))((t_{2}))$ \cite{MR2276855}.
\end{remark}

\section{Ad\`{e}les of schemes}

In \S \ref{SECT_TopologyProblemForHigherLocalFields} we have introduced higher
local fields and their topologies. In the present section we shall recall one
of the most natural sources producing these structures: the ad\`{e}les of a
scheme. Mimicking the classical one-dimensional theory of Chevalley and Weil,
this construction is due to Parshin in dimension two \cite{MR0419458}, and
then was extended to arbitrary dimension by Beilinson \cite{MR565095}.

\subsection{\label{subsect_AdeleSheaves}Definition of Parshin-Beilinson
ad\`{e}les}

We follow the notation of the original paper by Beilinson \cite{MR565095}. We
assume that $X$ is a Noetherian scheme. For us, any closed subset of $X$
tacitly also denotes the corresponding closed sub-scheme with the reduced
sub-scheme structure, e.g. for a point $\eta\in X$ we write $\overline
{\{\eta\}}$ to denote the reduced closed sub-scheme whose generic point is
$\eta$. For points $\eta_{0},\eta_{1}\in X$, we write $\eta_{0}>\eta_{1}$ if
$\overline{\{\eta_{0}\}}\ni\eta_{1}$, $\eta_{1}\neq\eta_{0}$. Denote by
$S\left(  X\right)  _{n}:=\{(\eta_{0}>\cdots>\eta_{n}),\eta_{i}\in X\}$ the
set of nondegenerate chains of length $n+1$. Let $K_{n}\subseteq S\left(
X\right)  _{n}$ be an arbitrary subset.

We will allow ourselves to denote the ideal sheaf of the reduced closed
sub-scheme $\overline{\{\eta\}}$ by $\eta$ as well. This allows a slightly
more lightweight notation and is particularly appropriate for affine schemes,
where the $\eta$ are essentially just prime ideals.

For any point $\eta\in X$, define $\left.  _{\eta}K\right.  :=\{(\eta
_{1}>\cdots>\eta_{n})$ s.t. $(\eta>\eta_{1}>\cdots>\eta_{n})\in K_{n}\}$, a
subset of $S\left(  X\right)  _{n-1}$. Let $\mathcal{F}$ be a
\textit{coherent} sheaf on $X$. For $n=0$ and $n\geq1$ respectively, we define
inductively%
\begin{align}
A(K_{0},\mathcal{F}):=  &
{\displaystyle\prod\nolimits_{\eta\in K_{0}}}
\underleftarrow{\lim}_{i}\,\mathcal{F}\otimes_{\mathcal{O}_{X}}\mathcal{O}%
_{X,\eta}/\eta^{i}\label{TATEMATRIX_l6}\\
A(K_{n},\mathcal{F}):=  &
{\displaystyle\prod\nolimits_{\eta\in X}}
\underleftarrow{\lim}_{i}\,A(\left.  _{\eta}K_{n}\right.  ,\mathcal{F}%
\otimes_{\mathcal{O}_{X}}\mathcal{O}_{X,\eta}/\eta^{i})\text{.}\nonumber
\end{align}
For a \textit{quasi-coherent} sheaf $\mathcal{F}$, we define%
\begin{equation}
A(K_{n},\mathcal{F}):=\underrightarrow{\operatorname*{colim}}_{\mathcal{F}%
_{j}}A(K_{n},\mathcal{F}_{j})\text{,}\label{lTX1}%
\end{equation}
where $\mathcal{F}_{j}$ runs through all coherent sub-sheaves of $\mathcal{F}%
$. As it is built successively from ind-limits and countable Mittag-Leffler
pro-limits, $A(K_{n},-)$ is an exact functor from the category of
quasi-coherent sheaves to the category of $\mathcal{O}_{X}$-module sheaves. We
state the following fact in order to provide some background, but it will not
play a big role in this paper:

\begin{theorem}
[{Beilinson \cite[\S 2]{MR565095}}]\label{lX_BeilinsonResolutionThm}For a
Noetherian scheme $X$, and a quasi-coherent sheaf $\mathcal{F}$ on $X$, there
is a functorial resolution%
\begin{equation}
0\longrightarrow\mathcal{F}\longrightarrow\mathcal{A}^{0}\longrightarrow
\mathcal{A}^{1}\longrightarrow\mathcal{A}^{2}\longrightarrow\cdots
\label{lBeilRes}%
\end{equation}
in the category of $\mathcal{O}_{X}$-module sheaves, made from the flasque
sheaves defined by $\mathcal{A}^{i}(U):=A(S\left(  U\right)  _{i}%
,\mathcal{F})$.
\end{theorem}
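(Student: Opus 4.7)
The plan is to construct the differentials explicitly, reduce the exactness statement to the coherent case by a filtered-colimit argument, and then finish by devissage on the support of $\mathcal{F}$ combined with a local completion computation.

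First I would build the cochain. For each chain $\xi=(\eta_{0}>\cdots>\eta_{i+1})\in S(X)_{i+1}$ and each index $j\in\{0,\ldots,i+1\}$, removing $\eta_{j}$ produces a face $\partial_{j}\xi\in S(X)_{i}$. Given $a\in \mathcal{A}^{i}(U)=A(S(U)_{i},\mathcal{F})$, there is a natural comparison map from the factor of $A(S(U)_{i},\mathcal{F})$ indexed by $\partial_{j}\xi$ into the factor of $A(S(U)_{i+1},\mathcal{F})$ indexed by $\xi$, obtained by performing the further localization/completion required to pass from the chain of length $i+1$ to the chain of length $i+2$; this is well defined precisely because the inductive definition in \eqref{TATEMATRIX_l6} was built out of such completions. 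Define $d^{i}a(\xi)=\sum_{j}(-1)^{j}a(\partial_{j}\xi)$. The standard simplicial identities $\partial_{j}\partial_{k}=\partial_{k-1}\partial_{j}$ for $j<k$ yield $d^{i+1}d^{i}=0$, and the augmentation $\varepsilon\colon\mathcal{F}\hookrightarrow \mathcal{A}^{0}$ sends a local section to the tuple of its images in the completed stalks $\varprojlim_{i}\mathcal{F}\otimes\mathcal{O}_{X,\eta}/\eta^{i}$.

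Next I would verify flasqueness. For $V\subseteq U$ open, the restriction $\mathcal{A}^{i}(U)\to\mathcal{A}^{i}(V)$ is, at the outermost product, a projection onto those chains whose first vertex lies in $V$; the inductive definition shows the same pattern for every subsequent index. A preimage is therefore obtained by padding with zero in the missing coordinates, so each $\mathcal{A}^{i}$ is flasque. Functoriality in $\mathcal{F}$ is immediate from the functoriality of localization and $\mathfrak{m}$-adic completion.

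For exactness I would first reduce to the coherent case: by \eqref{lTX1} and because filtered colimits of exact sequences of abelian sheaves are exact, it is enough to prove that for coherent $\mathcal{F}$ the complex $0\to\mathcal{F}\to\mathcal{A}^{\bullet}$ is exact. I would then run Noetherian induction on $Z=\mathrm{supp}\,\mathcal{F}$. Given a generic point $\eta$ of $Z$ of maximal dimension, pick a coherent subsheaf $\mathcal{F}'\subseteq\mathcal{F}$ whose quotient $\mathcal{G}=\mathcal{F}/\mathcal{F}'$ has support strictly smaller than $Z$ (e.g. using Artin–Rees on an ideal cutting out $\eta$ locally). Applying $A(K_{n},-)$, which is exact on quasi-coherent input by the iterated $\varinjlim$/Mittag-Leffler $\varprojlim$ structure noted after \eqref{lTX1}, gives short exact sequences of the $\mathcal{A}^{\bullet}$ complexes; the associated snake-lemma argument reduces exactness for $\mathcal{F}$ to exactness for $\mathcal{F}'$ (handled by induction) and for the skyscraper-like contribution at the generic point $\eta$.

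The main obstacle is the base-plus-generic local computation: showing that for $M$ an $\mathcal{O}_{X,\eta}$-module viewed as a sheaf on $\overline{\{\eta\}}$, the Beilinson complex is a resolution of $M$. The case $\dim Z=0$ is immediate because the complex degenerates to the completion of $\mathcal{F}$ at each closed point in degree $0$ and vanishes in positive degrees. For the inductive step one analyses the stalk of $\mathcal{A}^{\bullet}$ at $\eta$ after applying $-\otimes\mathcal{O}_{X,\eta}/\eta^{j}$ and passing to the limit; the combinatorial pattern of faces combined with Mittag-Leffler compatibility of the completions gives the required acyclicity. All other ingredients (construction of $d$, flasqueness, colimit reduction, devissage) are essentially formal; it is this bookkeeping of completions along nested ideals defining sub-flags that carries the real content.
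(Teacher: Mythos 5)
The paper does not actually prove this theorem: it is quoted as background from Beilinson, with the remark that the only proof in print is Huber's \cite{MR1105583}, \cite{MR1138291}. So there is no in-paper argument to compare against, and your proposal has to be judged as a stand-alone proof. Its formal components are fine and agree with the standard treatment: the cosimplicial differential via face maps and alternating signs, flasqueness of each $\mathcal{A}^{i}$ by projecting onto the sub-product of chains contained in $V$ and padding with zero, and the reduction to coherent $\mathcal{F}$ using Equation \eqref{lTX1} and exactness of filtered colimits.

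However, there is a genuine gap, in two places. First, the devissage is set up backwards: if you choose $\mathcal{F}'\subseteq\mathcal{F}$ so that $\mathcal{G}=\mathcal{F}/\mathcal{F}'$ has strictly smaller support, then the Noetherian induction on support applies to $\mathcal{G}$, not to $\mathcal{F}'$, which has the \emph{same} support as $\mathcal{F}$; as written, "exactness for $\mathcal{F}'$ (handled by induction)" is circular. The usual remedy is a finite filtration of $\mathcal{F}$ whose graded pieces are modules over $\mathcal{O}_X/\mathfrak{p}$ for primes $\mathfrak{p}$, reducing to sheaves pushed forward from integral closed subschemes. Second, and more seriously, the exactness of $0\to\mathcal{F}\to\mathcal{A}^{\bullet}$ --- the only nontrivial content of the theorem --- is never established: you defer it to "bookkeeping of completions along nested ideals defining sub-flags," which is precisely where Huber's proof does all of its work (passage to the reduced/local adelic complexes at a fixed point, compatibility of the pro-limits with exactness via Mittag-Leffler, and an explicit induction producing the acyclicity). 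Acknowledging that this step "carries the real content" without carrying it out means the proposal is an outline of a proof, not a proof.
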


We will not go into further detail. See Huber \cite{MR1105583},
\cite{MR1138291} for a detailed proof (the only proof available in print, as
far as we know) as well as further background.

\begin{example}
If $X/k$ is an integral proper curve, the complex \ref{lBeilRes} for
$\mathcal{F}:=\mathcal{O}_{X}$\ becomes%
\[
0\longrightarrow\mathcal{O}_{X}\longrightarrow\underline{k(X)}\oplus
\underset{x\in U_{0}}{\left.
{\textstyle\prod}
\right.  }\widehat{\mathcal{O}_{x}}\longrightarrow\underset{x\in U_{0}%
}{\left.
{\textstyle\prod\nolimits^{\prime}}
\right.  }\widehat{\mathcal{K}_{x}}\longrightarrow0\text{,}%
\]
where $\underline{k(X)}$ is the sheaf of rational functions, $U_{0}$ is the
set of closed points in any open $U$ (read these terms as sheaves in $U$),
$\widehat{\mathcal{K}_{x}}:=\operatorname*{Frac}\widehat{\mathcal{O}_{x}}$. In
particular, we obtain $H^{i}(X,\mathcal{O}_{X})$ as the cohomology of the
global sections of this flasque resolution. Note that the global sections of
the right-most term just correspond to the classical ad\`{e}les of the curve.
Hence, the Parshin-Beilinson ad\`{e}les really extend the classical framework.
As discussed in \S \ref{SECT_TopologyProblemForHigherLocalFields} the fields
$\widehat{\mathcal{K}_{x}}$ have a well-defined intrinsic topology, just
because they are $1$-local fields. For $\dim X\geq2$, we would get higher
local fields and the question of a topology begins to play a significant role.
\end{example}

\begin{remark}
[Other ad\`{e}le theories]In this paper, whenever we speak of
\textquotedblleft ad\`{e}les\textquotedblright, we will refer to the
Parshin-Beilinson ad\`{e}les as described in this section, or the papers
\cite{MR565095}, \cite{MR1138291}. There are other notions of ad\`{e}les as
well: First of all, the Parshin-Beilinson ad\`{e}les truly generalize the
classical ad\`{e}les only in the function field case: the ad\`{e}les of a
number field feature the infinite places as a very important ingredient, and
these are not covered by the Parshin-Beilinson formalism. In a different
direction, for us a higher local field has a ring of integers in each of its
residue fields, corresponding to a valuation taking values in the integers.
However, one can also look at this story from the perspective of higher-rank
valuations, i.e. taking values in $\mathbf{Z}^{r}$ with a lexicographic
ordering. This yields further, more complicated, rings of integers, along with
corresponding notions of ad\`{e}les. See Fesenko \cite{MR2046602},
\cite{MR2658047}. Finally, instead of allowing just quasi-coherent sheaves as
coefficients, one may also allow other sheaves as coefficients. See for
example \cite{MR2489487}, \cite{MR3366858}.
\end{remark}

\subsection{Local endomorphism algebras}

We axiomatize the basic algebraic structure describing well-behaved
endomorphisms, for example of $n$-local fields, or vector spaces over
$n$-local fields. In particular, this will apply to $n$-local fields built
from the ad\`{e}les.

\begin{definition}
\label{def_BeilinsonDefinitionNFoldCubicalAlgebra}A \emph{Beilinson }%
$n$\emph{-fold cubical algebra} is

\begin{enumerate}
\item an associative unital\footnote{For some applications it can be sensible
to allow non-unital $A$ as well, but we would not have a use for this level of
generality here.} $k$-algebra $A$;

\item two-sided ideals $I_{i}^{+},I_{i}^{-}$ such that we have $I_{i}%
^{+}+I_{i}^{-}=A$ for $i=1,\ldots,n$.
\end{enumerate}
\end{definition}

This structure appears in \cite{MR565095}, but does not carry a name in loc.
cit. In all examples of relevance to us, $A$ will be non-commutative. The rest
of this section will be devoted to three rather different ways to produce
examples of this type of algebra.

\subsection{Tate categories/ind-pro approach}

\begin{theorem}
[{\cite[Theorem 1]{bgwTateModule}}]\label{Thm_TateEndosAreCubicalAlgebras}Let
$\mathcal{C}$ be an idempotent complete and split exact category. For every
object $X\in\left.  n\text{-}\mathsf{Tate}_{\aleph_{0}}^{el}(\mathcal{C}%
)\right.  $, its endomorphism algebra carries the structure of a Beilinson
$n$-fold cubical algebra, we call it%
\[
E^{\operatorname*{Tate}}(X):=\operatorname*{End}\nolimits_{\left.
n\text{-}\mathsf{Tate}_{\aleph_{0}}^{el}(\mathcal{C})\right.  }\left(
X\right)  \text{.}%
\]

\end{theorem}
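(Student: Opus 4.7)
The goal is to exhibit, for each $i = 1, \ldots, n$, a pair of two-sided ideals $(I_i^+, I_i^-)$ in $A := \operatorname{End}(X)$ with $I_i^+ + I_i^- = A$. The plan is to produce one pair per Tate-structure layer of $X$; the essential content lies in the base case $n = 1$, and the general case is handled by applying the construction to each of the $n$ layers of the iterated ind-pro description of $X$.

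For $n = 1$, fix a lattice $L \hookrightarrow X$, which exists by definition of an elementary Tate object (and any two lattices share common sub- and over-lattices \cite{TateObjectsExactCats}). Define
\[
I^+ := \{ f \in A : \operatorname{im}(f) \subseteq L' \text{ for some lattice } L' \}, \qquad I^- := \{ f \in A : f|_{L'} = 0 \text{ for some lattice } L' \}.
\]
Two-sidedness of each ideal reduces to the factorization principle that every morphism from a pro-object into a Tate object factors through a lattice in the target, and dually \cite{bgwRelativeTateObjects}: for instance, if $f \in I^+$ has image in $L'$ and $g \in A$, then $g \circ (L' \hookrightarrow X)$ factors through some lattice $L''$, so $gf \in I^+$; the remaining three compositions are analogous. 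For the sum property, given $f \in A$ and a lattice $L$, enlarge to a lattice $L' \supset L$ with $f(L) \subseteq L'$. The quotient $L'/L$ lies in $\mathcal{C}$, and split-exactness of $\mathcal{C}$ together with idempotent completeness provides the brick-level splitting projectors which, propagated along the $\aleph_0$-indexed cofinal lattice filtration of $X$, assemble into a decomposition $f = f^+ + f^-$ with $f^+ \in I^+$ and $f^- \in I^-$.

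For general $n \geq 2$, iterating the base-case argument naively fails, because $(n-1)\text{-}\mathsf{Tate}(\mathcal{C})$ is typically not split exact even when $\mathcal{C}$ is. Instead, I would exploit the $n$-fold alternating ind-pro description of $X$ with bricks in $\mathcal{C}$: each of the $n$ layers supports its own notion of lattice and yields a pair $(I_\ell^+, I_\ell^-)$ by the same recipe. Two-sidedness holds at each layer by the factorization principle applied layerwise, and the sum property reduces, at the level of the brick category $\mathcal{C}$, to the split-exactness input used in the base case, with the other layers acting as passive filtered (co)limits through which the decomposition propagates.

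The principal obstacle is the sum property: the defining exact sequence $L \hookrightarrow X \twoheadrightarrow X/L$ of a Tate object generally does not split in $\mathsf{Tate}^{el}(\mathcal{C})$, so $f = f^+ + f^-$ cannot be obtained from a single categorical splitting of $X$. Producing the decomposition requires patching brick-level splittings coming from split-exactness of $\mathcal{C}$ into a coherent morphism of Tate objects, which is where idempotent completeness becomes essential -- the resulting projectors are not \emph{a priori} morphisms in the ambient Tate category. For $n \geq 2$, the additional delicacy is that all $n$ pairs must be simultaneously two-sided in the full $A$, which amounts to endomorphisms respecting every layer of the ind-pro structure, again a consequence of the factorization principle applied at each level.
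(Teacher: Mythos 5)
You should first note that this paper does not actually prove the statement: Theorem \ref{Thm_TateEndosAreCubicalAlgebras} is imported verbatim from \cite[Theorem 1]{bgwTateModule} (``See \cite{bgwTateModule} for the construction of the algebra structure\dots''), so the comparison is against that construction. Your definition of the outer pair (factor through a lattice, resp.\ kill a lattice), the two-sidedness argument via the factorization principle for maps out of Pro-objects and its dual, and the use of common sub-/over-lattices for additivity all match the cited construction. The genuine gap is the sum property $I_i^{+}+I_i^{-}=A$, which is the entire content of the theorem and which your proposal does not establish at any layer. Already for $n=1$, the hypotheses ``idempotent complete, split exact, $\aleph_0$'' are tailored precisely so that the lattice sequence $L\hookrightarrow X\twoheadrightarrow X/L$ \emph{does} split in $\mathsf{Tate}^{el}_{\aleph_0}(\mathcal{C})$: choose a countable cofinal chain of lattices $L=L_0\subseteq L_1\subseteq\cdots$, use idempotent completeness to know $L_i/L_0\in\mathcal{C}$, use split exactness to split the monics $L_i/L_0\hookrightarrow L_{i+1}/L_0$, and build compatible sections $L_i/L_0\to L_i$ by induction along the chain; the colimit is a section of Tate objects, giving $1=p^{+}+p^{-}$ with $p^{+}$ factoring through $L$ and $p^{-}$ vanishing on $L$, hence $f=fp^{+}+fp^{-}$ for every $f$. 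Your closing paragraph asserts the opposite (``the defining exact sequence generally does not split, so $f=f^{+}+f^{-}$ cannot be obtained from a single categorical splitting''), which is incorrect under the stated hypotheses, and your substitute --- ``brick-level splitting projectors \dots assemble into a decomposition'' --- is exactly the statement requiring proof: you never construct the assembled projector as a morphism of Ind-/Pro-diagrams, i.e.\ as an element of $\operatorname{End}(X)$.

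For $n\geq 2$ the gap is wider. You correctly observe that $(n-1)$-$\mathsf{Tate}(\mathcal{C})$ is not split exact, so the $n=1$ argument cannot simply be re-run one level in; but you then give neither a precise definition of the inner ideals $I_i^{\pm}$ for $i<n$ (one must say what ``the notion of lattice at layer $i$'' means --- e.g.\ factoring through the subcategory in which the $i$-th layer is Pro resp.\ Ind, or an inductive refinement definition as in Definitions \ref{Def_YekBeilCubAlgAndYekLattices} and \ref{Definition_AdeleOperatorIdeals}) nor any argument that $I_i^{+}+I_i^{-}=A$ there: ``the other layers acting as passive filtered (co)limits through which the decomposition propagates'' is a statement of intent, not a proof, and it is precisely at this point that the failure of split exactness one level up must be confronted. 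As written, the proposal is an outline of the right general strategy with the two hard steps --- the countable splitting lemma in the base case and the treatment of the inner layers --- missing, and with the former misdiagnosed as unavailable.
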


In particular, we can look at finite-dimensional $k$-vector spaces, i.e.
$\mathcal{C}:=\mathsf{Vect}_{f}$, and then the\ Tate objects \`{a} la
$k((t_{1}))\cdots((t_{n}))$ in \S \ref{subsect_KatoIndProApproach}
automatically carry a cubical endomorphism algebra.\newline See
\cite{bgwTateModule} for the construction of the algebra structure and for
further background. The above result is not given in the broadest possible
formulation, e.g. even if $\mathcal{C}$ is not split exact, the ideals
$I_{i}^{+},I_{i}^{-}$ can be defined. Moreover, they even make sense in
arbitrary $\operatorname*{Hom}$-groups and not just endomorphisms. Without
split exactness, one then has to be careful with the property $I_{1}^{+}%
+I_{1}^{-}=A$ however, which may fail in general.

The introduction of \cite{bgwTateModule} provides a reasonably short survey to
what extent the above theorem can be stretched, and which seemingly plausible
generalizations turn out to be problematic.

\subsection{Yekutieli's TLF approach}

Yekutieli also constructs such an algebra, but taking a topological local
field as its input.

\begin{theorem}
[A. Yekutieli]Let $k$ be a perfect field. Let $K$ be an $n$-dimensional TLF
over $k$. Then there is a canonically defined Beilinson $n$-fold cubical
$k$-algebra%
\[
E^{\operatorname*{Yek}}(K)\subseteq\operatorname*{End}\nolimits_{k}(K)\text{,}%
\]
contained in the algebra of all $k$-linear endomorphisms.
\end{theorem}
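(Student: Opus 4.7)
The plan is to construct $E^{\operatorname*{Yek}}(K)$ as the subalgebra of $\operatorname*{End}_k(K)$ consisting of $k$-linear endomorphisms that are \emph{commensurable with a natural family of lattices} arising from the $n$ iterated valuations of $K$, and then to exhibit the ideals $I_i^{\pm}$ directly from this lattice structure.

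First I would pick an algebraic system of liftings $\sigma$ (equivalently, a parametrization $\phi: k((t_{1}))\cdots((t_{n}))\xrightarrow{\sim}K$ as in Definition \ref{def_TLF}). For each level $i\in\{1,\dots,n\}$, this yields a notion of \emph{level-$i$ lattice}: a closed sub-$k$-module $L\subseteq K$ which, in the ind-pro presentation of Example \ref{example_YekutieliLaurentSTRingStructure}, is sandwiched between two ind-pro sub-objects differing only in their $t_{i}$-bound. Concretely, $L$ is a level-$i$ lattice iff there exist integers $c\le c'$ and a closed sub-$k$-module $\Lambda$ unconstrained in the $t_{i}$-direction such that $t_{i}^{c'}\Lambda\subseteq L\subseteq t_{i}^{c}\Lambda$. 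Now let $E_{\sigma}^{\operatorname*{Yek}}(K)\subseteq\operatorname*{End}_{k}(K)$ consist of those $f$ that are commensurable with level-$i$ lattices at every $i$: for each $i$ and each level-$i$ lattice $L$, there exist level-$i$ lattices $L', L''$ with $f(L)\subseteq L'$ and $f(L'')$ contained in any prescribed level-$i$ lattice. This is closed under composition. Define
\[
I_{i}^{+}:=\{\,f : f(K)\subseteq L\text{ for some level-}i\text{ lattice }L\,\}, \qquad I_{i}^{-}:=\{\,f : f|_{L}=0\text{ for some level-}i\text{ lattice }L\,\},
\]
which are visibly two-sided ideals of $E_{\sigma}^{\operatorname*{Yek}}(K)$.

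The identity $I_{i}^{+}+I_{i}^{-}=E_{\sigma}^{\operatorname*{Yek}}(K)$ is established as follows. Given $f$, use commensurability to pick a level-$i$ lattice $L$ with $f(L)\subseteq L'$ for some level-$i$ lattice $L'$. Since $K$ is a $k$-vector space, choose any $k$-linear splitting $K=L\oplus C$ and let $\pi_{L}:K\to L$ be the projection. Then
\[
f = f\circ\pi_{L} + f\circ(1-\pi_{L}),
\]
where $f\circ\pi_{L}$ has image contained in $L'$ (hence lies in $I_{i}^{+}$) and $f\circ(1-\pi_{L})$ vanishes on $L$ (hence lies in $I_{i}^{-}$). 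A routine check, using that commensurability is preserved under composition with projections onto lattices, confirms that both summands remain in $E_{\sigma}^{\operatorname*{Yek}}(K)$. This produces the Beilinson $n$-fold cubical $k$-algebra structure on $E_{\sigma}^{\operatorname*{Yek}}(K)$.

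The hardest part is the canonicity asserted in the theorem: that $E^{\operatorname*{Yek}}(K)$ is genuinely independent of the choice of $\sigma$. This is delicate because, by Yekutieli's Example \ref{example_Yekutieli}, different systems of liftings $\sigma$ can be related by field automorphisms of $K$ that are \emph{not} continuous in the ST topology, so one cannot simply transport the structure along such an automorphism. The required independence is the content of the deep theorem of Yekutieli quoted in the excerpt as Theorem \ref{Thm_YekIndependenceOfSystemOfLiftings}; once this is invoked, the cubical algebra structure on $E_{\sigma}^{\operatorname*{Yek}}(K)$ transfers unambiguously to a canonical $E^{\operatorname*{Yek}}(K)\subseteq\operatorname*{End}_{k}(K)$, completing the proof. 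I expect the main obstruction in any self-contained approach to lie entirely in establishing this $\sigma$-independence, rather than in the construction of the ideals $I_{i}^{\pm}$ themselves.
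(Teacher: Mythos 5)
First, note that the paper does not reprove this statement: it is quoted verbatim as \cite[Theorem 0.4]{MR3317764}, and the text only summarizes Yekutieli's construction (Definition \ref{Def_YekBeilCubAlgAndYekLattices}, with the cubical-algebra axioms being his Lemmas 4.17 and 4.19 and the $\sigma$-independence being Theorem \ref{Thm_YekIndependenceOfSystemOfLiftings}). Your overall architecture --- fix a system of liftings $\sigma$, define the algebra by lattice conditions, exhibit the ideals, then invoke the deep $\sigma$-independence theorem --- matches that summary, and you correctly locate the genuinely hard step in the independence of $\sigma$. But your reconstruction of the $\sigma$-dependent part has two real gaps. The more serious one is the ``flattening'' of the definition: you define level-$i$ lattices for all $i$ as subsets of $K$ and impose commensurability at each level separately, with $I_i^{+}$ consisting of maps whose image lands in a single level-$i$ lattice. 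Yekutieli's actual definition is recursive: the level-$\geq 2$ conditions are imposed not on $K$ but on the induced maps $\overline{f}\colon L_{1}/L_{1}'\to L_{2}'/L_{2}$ between subquotients, which only become $k_{1}$-modules \emph{via} the lifting $\sigma_{1}$ (this is exactly the point of the Dangerous Bend \ref{YekDangerousBend}). The translation into coordinates (compare Lemma \ref{lemma_indproreform2} and the ``in coordinates'' proposition for the Beilinson analogue) shows that membership in $I_i^{\pm}$ requires a bound $e_i$ that is allowed to depend on the outer refinement data $e_1,\dots,e_{i-1}$; your uniform condition ``$f(K)\subseteq L$ for one level-$i$ lattice $L$'' is strictly stronger for $i\geq 2$, so you are not constructing the same ideals, and you give no argument that your variant still satisfies $I_i^{+}+I_i^{-}=E$ or agrees with Yekutieli's algebra.

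The second gap is in the decomposition $f=f\circ\pi_{L}+f\circ(1-\pi_{L})$. For this to prove $I_i^{+}+I_i^{-}=E$ you need $f\circ\pi_L$ and $f\circ(1-\pi_L)$ to lie in $E_{\sigma}^{\operatorname*{Yek}}(K)$, which requires $\pi_L$ itself to be a local Beilinson--Tate operator. An arbitrary $k$-linear complement $C$ to a lattice will not have this property: the resulting projection is just some $k$-linear idempotent, and (as Example \ref{example_Yekutieli} and Example \ref{example_YekutieliIndPro} illustrate in a related setting) generic $k$-linear maps respect neither the ST topology nor the deeper lattice structure. One must choose $C$ adapted to the parametrization (e.g.\ the ``principal part'' complement $t_i^{-1}k((t_1))\cdots k[t_i^{-1}]\cdots((t_n))$ in coordinates) and then verify, level by level through the recursive definition, that $\pi_L\in E_{\sigma}^{\operatorname*{Yek}}(K)$. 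This is precisely the content of Yekutieli's Lemma 4.17/4.19 and is not a ``routine check.''
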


This is \cite[Theorem 0.4]{MR3317764}. We briefly summarize what lies behind
this: Firstly, Yekutieli introduces the notion of topological \emph{systems of
liftings} $\sigma$ for TLFs \cite[Def. 3.17]{MR3317764} (actually it is easy
to define: this is an algebraic system of liftings, as in our Definition
\ref{def_AlgSysOfLiftings}, where the sections $\sigma_{i}$ have to be
ST\ morphisms. We have already seen in Example \ref{example_Yekutieli} that
this truly cuts down the possible choices). Then he gives a very explicit
definition of a Beilinson $n$-fold cubical algebra called $E_{\sigma}^{K}$ in
loc. cit., depending on this choice of liftings. The precise definition is
\cite[Def. 4.5 and 4.14]{MR3317764}, and we refer the reader to this paper for
a less dense presentation and many more details:

\begin{definition}
[Yekutieli]\label{Def_YekBeilCubAlgAndYekLattices}Let $k$ be a perfect field
and $K$ an $n$-dimensional TLF over $k$.

\begin{enumerate}
\item If $M$ is a finite $K$-module, a \emph{Yekutieli lattice} $L$ is a
finite $\mathcal{O}_{1}$-submodule of $M$ such that $K\cdot L=M$.

\item Fix any system of liftings $\sigma=(\sigma_{1},\ldots,\sigma_{n})$ in
the sense of Yekutieli \cite[Def. 3.17]{MR3317764}. For finite $K$-modules
$M_{1},M_{2}$, define%
\[
E_{\sigma}^{\operatorname*{Yek}}(M_{1},M_{2})\subseteq\operatorname*{Hom}%
\nolimits_{k}(M_{1},M_{2})
\]
to be those $k$-linear maps such that

\begin{enumerate}
\item for $n=0$ there is no further restriction, all $k$-linear maps are allowed;

\item for $n\geq1$ and all Yekutieli lattices $L_{1}\subset M_{1},L_{2}\subset
M_{2}$, there have to exist Yekutieli lattices $L_{1}^{\prime}\subset
M_{1},L_{2}^{\prime}\subset M_{2}$ such that%
\[
L_{1}^{\prime}\subseteq L_{1},\qquad L_{2}\subseteq L_{2}^{\prime},\qquad
f(L_{1}^{\prime})\subseteq L_{2},\qquad f(L_{1})\subseteq L_{2}^{\prime}%
\]
and for all such choices $L_{1},L_{1}^{\prime},L_{2},L_{2}^{\prime}$ the
induced $k$-linear homomorphism%
\begin{equation}
\overline{f}:L_{1}/L_{1}^{\prime}\rightarrow L_{2}^{\prime}/L_{2}%
\tag{$\lozenge$}\label{l_eq_reductionYek}%
\end{equation}
must lie in $E_{(\sigma_{2},\ldots,\sigma_{n})}^{\operatorname*{Yek}}%
(L_{1}/L_{1}^{\prime},L_{2}^{\prime}/L_{2})$. For this read $L_{1}%
/L_{1}^{\prime}$ and $L_{2}^{\prime}/L_{2}$ as $k_{1}$-modules via the lifting
$\sigma_{1}:k_{1}\hookrightarrow\mathcal{O}_{1}$. Yekutieli calls any such
pair $(L_{1}^{\prime},L_{2}^{\prime})$ an $f$\emph{-refinement} of
$(L_{1},L_{2})$.
\end{enumerate}

\item Define $I_{1,\sigma}^{+}(M_{1},M_{2})$ to be those $f\in E_{\sigma
}^{\operatorname*{Yek}}(M_{1},M_{2})$ such that there exists a Yekutieli
lattice $L\subset M_{2}$ with $f(M_{1})\subseteq L$. Dually, $I_{1,\sigma}%
^{-}(M_{1},M_{2})$ is made of those such that there exists a lattice $L\subset
M_{1}$ with the property $f(L)=0$.

\item For $i=2,\ldots,n$, and both \textquotedblleft$+$/$-$\textquotedblright,
we let $I_{i,\sigma}^{\pm}(M_{1},M_{2})$ consist of those $f\in E_{\sigma
}^{\operatorname*{Yek}}(M_{1},M_{2})$ such that for all lattices $L_{1}%
,L_{1}^{\prime},L_{2},L_{2}^{\prime}$ as in part (2), Equation
\ref{l_eq_reductionYek}, the condition%
\[
\overline{f}\in I_{(i-1),(\sigma_{2},\ldots,\sigma_{n})}^{\pm}(L_{1}%
/L_{1}^{\prime},L_{2}^{\prime}/L_{2})
\]
holds.

\item For any finite $K$-module $M$, these ideals equip $(E_{\sigma
}^{\operatorname*{Yek}}(M,M),I_{i,\sigma}^{\pm}(M,M))$ with the structure of a
Beilinson $n$-fold cubical algebra. Yekutieli calls elements of $E_{\sigma
}^{\operatorname*{Yek}}$ a \emph{local Beilinson-Tate operator}.
\end{enumerate}
\end{definition}

The verification that this is indeed a cubical algebra is essentially
\cite[Lemma 4.17 and 4.19]{MR3317764}.

\begin{dangerousbend}
\label{YekDangerousBend}Something is \textsl{very important} to stress in this
context: The system of liftings plays an absolutely crucial role here. The
quotients%
\[
L_{1}/L_{1}^{\prime}\qquad\text{and}\qquad L_{2}^{\prime}/L_{2}%
\]
in Equation \ref{l_eq_reductionYek} carry a canonical structure as torsion
$\mathcal{O}_{1}$-modules. There is no canonical way to turn them into modules
over the residue field $k_{1}$; the residue map%
\[
\mathcal{O}_{1}\twoheadrightarrow k_{1}%
\]
goes in the wrong direction. So we really need a section to this map, i.e. a
system of liftings. As we have seen in the Example \ref{example_Yekutieli}
(due to Yekutieli), there can be very different sections, so a priori there is
a critical dependence of $E_{\sigma}^{\operatorname*{Yek}}$ on $\sigma$.
\end{dangerousbend}

The key technical input then becomes a rather surprising observation
originating from Yekutieli \cite{MR1213064}: Every change between Yekutieli's
systems of liftings must essentially come from a continuous differential
operator, see \cite[\S 2, especially Theorem 2.8 for $M_{1}=M_{2}$]{MR3317764}
for a precise statement, and these in turn lie in $E_{\sigma}^{K}$ regardless
of the $\sigma$. This establishes the independence of the system of liftings chosen.

\begin{theorem}
[Yekutieli \cite{MR3317764}]\label{Thm_YekIndependenceOfSystemOfLiftings}The
sub-algebra $E_{\sigma}^{\operatorname*{Yek}}(M_{1},M_{2})\subseteq
\operatorname*{Hom}\nolimits_{k}(M_{1},M_{2})$ is independent of the choice of
$\sigma$, and a choice of $\sigma$ always exists.
\end{theorem}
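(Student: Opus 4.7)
The plan is to handle existence and independence of $\sigma$ separately, with the bulk of the work on independence.

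For existence, I would proceed by induction on $n$ using Cohen's structure theorem. Since $k$ is perfect and by Definition \ref{def_TLF} the last residue field of $K$ is a finite extension of $k$, each successive residue field $k_i$ is also finite over $k$. Proposition \ref{Prop_CohenStructureTheorem}(4) therefore produces a $k$-algebra section $\sigma_i \colon k_i \hookrightarrow \mathcal{O}_i$ of the residue map at every stage, and in this finite residue-field setting it is moreover unique as an algebraic lift. What remains is to verify that the $\sigma_i$ are continuous for the ST topology rather than only algebraic sections. I would do this by comparing with a parametrization $\phi \colon k((t_1))\cdots((t_n)) \overset{\sim}{\to} K$ from Definition \ref{def_TLF}: on the Laurent side the tautological coefficient fields are continuous by construction (Example \ref{example_YekutieliLaurentSTRingStructure}), and the uniqueness clause in Proposition \ref{Prop_CohenStructureTheorem}(4) forces $\sigma_i$ to coincide with the transport of the tautological lifting under $\phi$, whence continuity passes to $K$.

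For independence, I would argue by induction on $n$, the case $n=0$ being vacuous. Fix systems $\sigma$ and $\sigma'$ and let $f \in E_{\sigma}^{\operatorname*{Yek}}(M_1, M_2)$. Given Yekutieli lattices $L_1 \subset M_1$ and $L_2 \subset M_2$, the hypothesis $f \in E_{\sigma}^{\operatorname*{Yek}}$ produces an $f$-refinement $(L_1', L_2')$ such that the induced map $\bar{f} \colon L_1/L_1' \to L_2'/L_2$ lies in $E^{\operatorname*{Yek}}_{(\sigma_2,\ldots,\sigma_n)}$, where source and target carry their $k_1$-module structure via $\sigma_1$. The set-theoretic inclusions defining an $f$-refinement do not mention $\sigma$ at all, so $(L_1', L_2')$ is simultaneously a $\sigma'$-refinement; what genuinely changes is only the $k_1$-module structure on the subquotients, now taken via $\sigma_1'$ rather than $\sigma_1$.

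The crux is comparing these two $k_1$-structures. The difference $\sigma_1 - \sigma_1' \colon k_1 \to \mathcal{O}_1$ lands in $\mathfrak{m}_1$ because both maps are sections of the residue quotient $\mathcal{O}_1 \twoheadrightarrow k_1$, and an explicit computation shows it acts on the relevant subquotients via a continuous $k$-differential operator of finite order; this is the content of \cite[\S 2, Thm.~2.8]{MR3317764}, the deep input alluded to in the discussion preceding the theorem. The remaining step is a direct verification from Definition \ref{Def_YekBeilCubAlgAndYekLattices} that every finite-order continuous differential operator on $K$ belongs to $E^{\operatorname*{Yek}}_\tau$ for every system of liftings $\tau$, and respects the two-sided ideals $I^{\pm}_{i,\tau}$ in the expected way. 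Combined with the inductive hypothesis applied to the $(n-1)$-dimensional TLF $k_1$, this lets me transport the defining condition for $f \in E_{\sigma}^{\operatorname*{Yek}}$ into $E_{\sigma'}^{\operatorname*{Yek}}$, completing the induction. The main obstacle is precisely this differential-operator step: identifying the change-of-lifting with a continuous differential operator is the non-formal heart of the argument and relies crucially on $k$ being perfect, which is what makes the tower of coefficient fields rigid enough for the comparison to make sense; once that input is granted, the recursive bookkeeping through the ideals $I^{\pm}_{i,\tau}$ is routine if notationally demanding.
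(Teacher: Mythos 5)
Your treatment of the independence statement follows the same route as the paper: this result is quoted from Yekutieli, and the paper's own justification is exactly the sketch you give, namely that the discrepancy between two systems of liftings is governed by continuous differential operators (\cite[\S 2, Thm.~2.8]{MR3317764}), which lie in $E_{\tau}^{\operatorname*{Yek}}$ for every $\tau$ and respect the ideals $I_{i,\tau}^{\pm}$; the recursive bookkeeping through refinements is as you describe. So on the main point there is nothing to object to.

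The existence argument, however, contains a false step. You assert that ``each successive residue field $k_i$ is also finite over $k$.'' This holds only for $i=n$: for $i<n$ the residue field $k_i$ is an $(n-i)$-local field, e.g.\ $k_1\simeq k((t_1))\cdots((t_{n-1}))$, which is of infinite transcendence degree over $k$. Consequently the uniqueness clause of Proposition \ref{Prop_CohenStructureTheorem}(4) does not apply to $\sigma_i$ for $i<n$, and your mechanism for deducing continuity --- identifying the Cohen-produced lift with the transport of the tautological one by uniqueness --- collapses. Indeed Example \ref{example_Yekutieli} exhibits a huge family of pairwise distinct $k$-algebra coefficient fields $k_1\hookrightarrow\mathcal{O}_1$, almost all of which are discontinuous, so a lift produced blindly by Cohen's theorem need not be an ST morphism. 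The repair is easier than what you attempt: Definition \ref{def_TLF} guarantees a parametrization $\phi:k((t_1))\cdots((t_n))\overset{\sim}{\to}K$ in $\mathsf{STRing}(k)$, and one simply defines $\sigma$ to be the transport under $\phi$ of the tautological liftings of the Laurent series field, which are continuous by construction (Example \ref{example_YekutieliLaurentSTRingStructure}). With that substitution the existence claim is immediate and the rest of your argument stands.
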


In order to distinguish his algebra, called \textquotedblleft$E^{K}%
$\textquotedblright\ in loc. cit., from the other variants appearing in this
paper, we shall call it $E^{\operatorname*{Yek}}$ in this paper. By the above
theorem, a reference to $\sigma$ is no longer needed at all.

\begin{remark}
If one looks at the $n$-dimensional TLF $K:=k((t_{1}))\cdots((t_{n}))$ over
$k$, then a precursor of Yekutieli's algebra is Osipov's algebra
\textquotedblleft$\operatorname*{End}\nolimits_{K}$\textquotedblright\ of his
2007 paper \cite[\S 2.3]{MR2314612}. As an associative algebra, it agrees with
$E_{\sigma}^{\operatorname*{Yek}}(K,K)$ and $\sigma$ the standard lifting.
However, Osipov's definition really uses the concrete presentation of $K$ as
Laurent series, so (a priori) it does not suffice to know $K$ as a plain TLF
or $n$-local field.
\end{remark}

\subsection{Beilinson's global approach}

Now suppose $X/k$ is a reduced scheme of finite type and pure dimension $n$.
We use the notation of \S \ref{subsect_AdeleSheaves}.

\begin{definition}
\label{def_Mis1}Let $\triangle=\{(\eta_{0}>\cdots>\eta_{i})\}\subseteq
S\left(  X\right)  _{i}$ (for some $i$) be a singleton set with
$\operatorname*{codim}\nolimits_{X}\overline{\{\eta_{r}\}}=r$.

\begin{enumerate}
\item Define $\triangle^{\prime}:=\{(\eta_{1}>\cdots>\eta_{n})\}\subseteq
S\left(  X\right)  _{i-1}$, removing the initial entry.

\item Write $\mathcal{F}_{\triangle}:=A(\triangle,\mathcal{F})$ for
$\mathcal{F}$ a quasi-coherent sheaf on $X$.
\end{enumerate}
\end{definition}

The notation $M_{\triangle}$ also makes sense if $M$ is an $\mathcal{O}%
_{\eta_{0}}$-module since any such defines a quasi-coherent sheaf.

\begin{definition}
[Beilinson \cite{MR565095}]\label{Definition_AdeleOperatorIdeals}Suppose
$\triangle=\{(\eta_{0}>\cdots>\eta_{i})\}$ is given.

\begin{enumerate}
\item If $M$ is a finitely generated $\mathcal{O}_{\eta_{0}}$-module, a
\emph{Beilinson lattice} in $M$ is a finitely generated $\mathcal{O}_{\eta
_{1}}$-module $L\subseteq M$ such that $\mathcal{O}_{\eta_{0}}\cdot L=M$.

\item Let $M_{1}$ and $M_{2}$ both be finitely generated $\mathcal{O}%
_{\eta_{0}}$-modules. Define $\operatorname*{Hom}\nolimits_{\varnothing}%
(M_{1},M_{2}):=\operatorname*{Hom}\nolimits_{k}(M_{1},M_{2})$ as all
$k$-linear maps. Define $\operatorname*{Hom}\nolimits_{\triangle}(M_{1}%
,M_{2})$ to be the $k$-submodule of all those maps $f\in\operatorname*{Hom}%
\nolimits_{k}(M_{1\triangle},M_{2\triangle})$ such that for all Beilinson
lattices $L_{1}\subset M_{1},L_{2}\subset M_{2}$ there exist lattices
$L_{1}^{\prime}\subset M_{1},L_{2}^{\prime}\subset M_{2}$ such that%
\[
L_{1}^{\prime}\subseteq L_{1},\qquad L_{2}\subseteq L_{2}^{\prime},\qquad
f(L_{1\triangle^{\prime}}^{\prime})\subseteq L_{2\triangle^{\prime}},\qquad
f(L_{1\triangle^{\prime}})\subseteq L_{2\triangle^{\prime}}^{\prime}%
\]
and for all such choices $L_{1},L_{1}^{\prime},L_{2},L_{2}^{\prime}$ the
induced $k$-linear homomorphism%
\[
\overline{f}:(L_{1}/L_{1}^{\prime})_{\triangle^{\prime}}\rightarrow
(L_{2}^{\prime}/L_{2})_{\triangle^{\prime}}%
\]
lies in $\operatorname*{Hom}\nolimits_{\triangle^{\prime}}(L_{1}/L_{1}%
^{\prime},L_{2}^{\prime}/L_{2})$.

\item Define $I_{1\triangle}^{+}(M_{1},M_{2})$ to be those $f\in
\operatorname*{Hom}\nolimits_{\triangle}(M_{1},M_{2})$ such that there exists
a lattice $L\subset M_{2}$ with $f(M_{1\triangle})\subseteq L_{\triangle
^{\prime}}$. Dually, $I_{1\triangle}^{-}(M_{1},M_{2})$ is made of those such
that there exists a lattice $L\subset M_{1}$ with the property $f(L_{\triangle
^{\prime}})=0$.

\item For $i=2,\ldots,n$, and both \textquotedblleft$+$/$-$\textquotedblright,
we let $I_{i\triangle}^{\pm}(M_{1},M_{2})$ consist of those $f\in
\operatorname*{Hom}\nolimits_{\triangle}(M_{1},M_{2})$ such that for all
lattices $L_{1},L_{1}^{\prime},L_{2},L_{2}^{\prime}$ as in part (3) the
condition%
\[
\overline{f}\in I_{(i-1)\triangle^{\prime}}^{\pm}(L_{1}/L_{1}^{\prime}%
,L_{2}^{\prime}/L_{2})
\]
holds.
\end{enumerate}
\end{definition}

With these definitions in place we are ready to formulate another principal
source of algebras as in Definition
\ref{def_BeilinsonDefinitionNFoldCubicalAlgebra}:

\begin{theorem}
[{Beilinson, \cite[\S 3]{MR565095}}]%
\label{Thm_BeilFlagInSchemeGivesCubDecompAlgebra}Suppose $X/k$ is a reduced
finite type scheme of pure dimension $n$. Let $\eta_{0}>\cdots>\eta_{n}\in
S\left(  X\right)  _{n}$ be a flag with $\operatorname*{codim}\nolimits_{X}%
\overline{\{\eta_{i}\}}=i$. Then%
\[
E_{\triangle}^{\operatorname*{Beil}}:=\operatorname*{Hom}\nolimits_{\triangle
}(\mathcal{O}_{\eta_{0}},\mathcal{O}_{\eta_{0}})
\]
is an associative sub-algebra of all $k$-linear maps from $\mathcal{O}%
_{X\triangle}$ to itself. For $i=1,2,\ldots,n$, define $I_{i\triangle}^{\pm
}\subseteq E_{\triangle}^{\operatorname*{Beil}}$ by $I_{i\triangle}^{\pm
}(\mathcal{O}_{\eta_{0}},\mathcal{O}_{\eta_{0}})$. Then $(E_{\triangle
}^{\operatorname*{Beil}},(I_{i\triangle}^{\pm}))$ is a Beilinson $n$-fold
cubical algebra. We shall call its elements \emph{global Beilinson-Tate
operators}.
\end{theorem}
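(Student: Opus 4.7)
The plan is to argue by induction on the length $n$ of the flag $\triangle$. The base case $n=0$ is trivial: the definition just gives $E^{\operatorname{Beil}} = \operatorname{End}_k(\mathcal{O}_{\eta_0})$, and there are no ideals to verify. For the inductive step, the recursion in Definition~\ref{Definition_AdeleOperatorIdeals} is set up so that, having fixed an $f$-refinement $(L_1',L_2')$ of a pair of Beilinson lattices $(L_1,L_2)$, the induced map $\overline{f}$ lives in $\operatorname{Hom}_{\triangle'}$, to which the inductive hypothesis applies. So every structural property we seek will be reduced, via this passage to subquotients, to either the case of $\operatorname{Hom}_{\triangle'}$ (one step shorter) or to the $i=1$ case.

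First I would establish that $\operatorname{Hom}_\triangle(\mathcal{O}_{\eta_0},\mathcal{O}_{\eta_0})$ is closed under composition (and contains the identity, which is clear). Given $f,g \in \operatorname{Hom}_\triangle$ and Beilinson lattices $L_1,L_2$, one applies the refinement axiom to $g$ with respect to $(L_1,L_2)$ to get lattices $L_1'\subseteq L_1$ and $L_2'\supseteq L_2$ with $g(L_{1,\triangle'})\subseteq L_{2,\triangle'}'$ and $g(L_{1,\triangle'}')\subseteq L_{2,\triangle'}$, then applies it to $f$ with respect to $(L_2',L_2)$ (and to $(L_1,L_1')$ on the source side) and intersects/enlarges to produce a common refinement. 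The induced map on the corresponding subquotient is then a composition of maps in $\operatorname{Hom}_{\triangle'}$, hence in $\operatorname{Hom}_{\triangle'}$ by induction. The fact that the $I_{i\triangle}^{\pm}$ are two-sided ideals is then straightforward from their definitions (for $i=1$, composing with any $h\in E^{\operatorname{Beil}}_\triangle$ preserves the condition ``image in a lattice'' or ``vanishing on a lattice'' after adjusting the lattice via the refinement axiom; for $i\geq 2$, one pushes the property to the subquotient and uses the inductive statement for $\triangle'$).

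The main obstacle is the decomposition property $I_{1\triangle}^+ + I_{1\triangle}^- = E^{\operatorname{Beil}}_\triangle$. Given $f \in E^{\operatorname{Beil}}_\triangle$, I would use the refinement axiom applied to $(L,L)$ for any fixed Beilinson lattice $L \subset \mathcal{O}_{\eta_0}$: one obtains a smaller lattice $L'\subseteq L$ with $f(L_{\triangle'}')\subseteq L_{\triangle'}$ and a larger lattice $L''\supseteq L$ with $f(L_{\triangle'})\subseteq L_{\triangle'}''$. The goal is to construct an idempotent $e \in E^{\operatorname{Beil}}_\triangle$ (a projector onto a chosen lattice along a chosen complement) such that $ef \in I_{1\triangle}^+$ and $(1-e)f \in I_{1\triangle}^-$. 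The subtlety is that the projector $e$ itself must belong to $\operatorname{Hom}_\triangle$, which is a nontrivial constraint: one needs to choose a $k$-vector space complement of the chosen lattice compatible with the lattice filtrations at all lower levels. The Beilinson lattices are $\mathcal{O}_{\eta_1}$-modules, so the complement can be produced by splitting the $\mathcal{O}_{\eta_1}$-module $\mathcal{O}_{\eta_0}/L$ and lifting; the compatibility with lower-level lattices is then checked by descending through $\triangle'$ and invoking the inductive hypothesis. Once such a projector exists, the splitting $f = ef + (1-e)f$ exhibits $f$ as an element of $I_{1\triangle}^+ + I_{1\triangle}^-$.

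For $i \geq 2$, the decomposition $I_{i\triangle}^+ + I_{i\triangle}^- = E^{\operatorname{Beil}}_\triangle$ reduces to the $i-1$ case for $\triangle'$: given $f$ and a compatible family of refinements of all lattice pairs, the induced $\overline{f}$ on every subquotient $L_1/L_1' \to L_2'/L_2$ decomposes in $\operatorname{Hom}_{\triangle'}$ by induction as $\overline{f} = \overline{f}^+ + \overline{f}^-$ with $\overline{f}^\pm \in I_{(i-1)\triangle'}^\pm$. The serious work is to check that these local decompositions can be glued to a single global decomposition of $f$ itself in $E^{\operatorname{Beil}}_\triangle$; this again leans on the compatibility of a chosen family of lattice splittings, and is where most of the bookkeeping concentrates.
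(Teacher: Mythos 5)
The paper itself offers no proof of this statement: it is quoted as Beilinson's result from \cite[\S 3]{MR565095}, and the route the paper actually relies on is indirect --- Theorem \ref{Thm_ComparisonOfCubicalAlgebras}(1) identifies $E_{\triangle}^{\operatorname*{Beil}}$ with $E^{\operatorname*{Tate}}(\mathcal{O}_{X\triangle})$ via the ind-pro presentation of Corollary \ref{cor_Obvious} and Lemma \ref{lemma_indproreform2}, and the cubical-algebra structure is then inherited from Theorem \ref{Thm_TateEndosAreCubicalAlgebras}, where the crucial identity $I_{1}^{+}+I_{1}^{-}=A$ comes from the fact that lattices split off in $\left. n\text{-}\mathsf{Tate}^{el}(\mathsf{Vect}_{f})\right.$ because $\mathsf{Vect}_{f}$ is split exact and idempotent complete. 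Your direct inductive verification is the natural alternative (and is essentially what Beilinson's terse original, and Yekutieli's local analogue in \cite[Lemmas 4.17, 4.19]{MR3317764}, carry out): the closure under composition, the two-sided ideal property, and the decomposition $f=ef+(1-e)f$ with $ef\in I_{1}^{+}$ (image in $L_{\triangle'}$) and $(1-e)f\in I_{1}^{-}$ (vanishing on an $L'$ with $f(L'_{\triangle'})\subseteq L_{\triangle'}$) are all correctly set up, and the reduction of $i\geq 2$ to $\triangle'$ is the right recursion.

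The one place where your argument is asserted rather than proved is exactly the step you flag: the existence of the projector $e$ \emph{inside} $\operatorname*{Hom}_{\triangle}$. Your proposed mechanism --- ``splitting the $\mathcal{O}_{\eta_{1}}$-module $\mathcal{O}_{\eta_{0}}/L$ and lifting'' --- splits the wrong object: what is needed is a $k$-linear idempotent on the adelic completion $\mathcal{O}_{X\triangle}$ with image $L_{\triangle'}$ that itself satisfies the recursive lattice conditions of Definition \ref{Definition_AdeleOperatorIdeals} at every level, and an abstract $\mathcal{O}_{\eta_{1}}$-module splitting of $L\subseteq\mathcal{O}_{\eta_{0}}$ (which need not exist, as $\mathcal{O}_{\eta_{0}}$ is not a projective or even reasonable $\mathcal{O}_{\eta_{1}}$-module) would not produce a BT operator even if it did. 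The workable construction is level-by-level on the iterated Ind-Pro presentation of Corollary \ref{cor_Obvious}: split each finite-dimensional quotient $L_{n}/L_{n}'$ compatibly up the tower, i.e. split the admissible monic $L_{\triangle'}\hookrightarrow\mathcal{O}_{X\triangle}$ in the $n$-Tate category --- which is precisely the content the paper delegates to \cite{bgwTateModule}. So your outline is sound, but closing it honestly requires either importing that splitting result or reproving it, and the same caveat applies to your final ``gluing'' step for $I_{i}^{+}+I_{i}^{-}=E$ with $i\geq 2$, where the compatible family of subquotient decompositions is most cleanly produced by the good idempotents $P_{i}^{\pm}$ of the Tate-theoretic picture rather than assembled by hand.
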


The structure of this definition is very close to the variant of Yekutieli.
However, some essential ingredients differ significantly: On the one hand, no
system of liftings is used, so there is no counterpart of the Dangerous Bend
in \S \ref{YekDangerousBend} and no need for a result like Yekutieli's Theorem
\ref{Thm_YekIndependenceOfSystemOfLiftings}. On the other hand, we pay the
price of using the $r$-dimensional local rings $\mathcal{O}_{\eta_{r}}$ of
$X$. Thus, we really use some data of the scheme $X$ which a stand-alone
TLF\ cannot provide.

\section{\label{section_StandAloneHLFs}Stand-alone higher local fields}

Let $k$ be a perfect field and $K$ an $n$-dimensional TLF over $k$. Then for
finite $K$-modules $V_{1},V_{2}$ we have Yekutieli's cubical algebra,
Definition \ref{Def_YekBeilCubAlgAndYekLattices}, $E^{\operatorname*{Yek}%
}(V_{1},V_{2})$. However, we could try to interpret $K$ as an $n$-Tate object
in finite-dimensional $k$-vector spaces (in some way still to discuss) so that
we also have the corresponding cubical algebra as $n$-Tate objects, Theorem
\ref{Thm_TateEndosAreCubicalAlgebras}. We will establish a comparison
result.\medskip

There will be two variations: (1) We consider the multiple Laurent series
field%
\[
K=k((t_{1}))((t_{2}))\cdots((t_{n}))\text{.}%
\]
This is canonically a TLF, Example
\ref{example_YekutieliLaurentSTRingStructure}, and simultaneously canonically
an $n$-Tate object, Example \ref{example_KatoIndProOfLaurentSeries}. In this
case both cubical algebras are defined and we shall show that they are
canonically isomorphic.

(2) We shall consider a general TLF. In this case one has to choose a
presentation as an $n$-Tate object. This makes the comparison a little more
involved, but thanks to the results of Yekutieli's paper \cite{MR3317764}, one
still arrives at an isomorphism.

\subsection{Variant 1: Multiple Laurent series fields}

Let $k$ be a field. Recall the following:

\begin{enumerate}
\item $k[[t]]$ is a principal ideal domain,

\item every non-zero ideal is of the form $(t^{n})$ for $n\geq0$,

\item every finitely generated module is (non-canonically) of the form
\[
k[[t]]^{\oplus r_{0}}\oplus\bigoplus_{i=1}^{m}k[[t]]/t^{n_{i}}\text{,}%
\]

\item the forgetful functor $\mathsf{Mod}_{f}(k[[t]])\rightarrow
\mathsf{Vect}(k)$ is exact and canonically factors through an exact functor
\[
\mathsf{Mod}_{f}(k[[t]])\rightarrow\mathsf{Pro}_{\aleph_{0}}^{a}(k)\text{,}%
\]

\item \label{itemToep}the forgetful functor $\mathsf{Vect}_{f}%
(k((t)))\rightarrow\mathsf{Vect}(k)$ is exact and canonically factors through
an exact functor
\[
T:\mathsf{Vect}_{f}(k((t)))\rightarrow\mathsf{Tate}_{\aleph_{0}}%
^{el}(k)\text{.}%
\]

\end{enumerate}

Define $K:=k((t_{1}))\cdots((t_{n}))$.

\begin{lemma}
\label{lemma_TLFforget}The forgetful functor%
\[
\mathsf{Vect}_{f}(K)\rightarrow\mathsf{Vect}(k)
\]
is exact and factors through an exact functor
\[
T:\mathsf{Vect}_{f}(K)\rightarrow n\text{-}\mathsf{Tate}_{\aleph_{0}}^{el}(k).
\]

\end{lemma}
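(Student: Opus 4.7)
The plan is to proceed by induction on $n$, using item (\ref{itemToep}) (the Toeplitz-style factorization for a single Laurent series variable) as the base case and iterating the construction $\mathsf{Tate}^{el}(-)$ applied to exact functors.

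For $n=1$, the statement is exactly item (\ref{itemToep}). For the inductive step, write $K = F((t_n))$ with $F := k((t_1))\cdots((t_{n-1}))$. Applying item (\ref{itemToep}) with $F$ in place of the ground field yields an exact functor
\[
T^{(1)}:\mathsf{Vect}_{f}(K)\longrightarrow \mathsf{Tate}^{el}_{\aleph_{0}}(\mathsf{Vect}_{f}(F)),
\]
sending a finite-dimensional $K$-vector space $V$ to the Ind-Pro presentation $\operatorname*{colim}_{i}\lim_{j}(t_n^{-i}L/t_n^{j}L)$ determined by any $F[[t_n]]$-lattice $L\subset V$. The inductive hypothesis provides an exact functor $T^{(n-1)}:\mathsf{Vect}_{f}(F)\to (n-1)\text{-}\mathsf{Tate}^{el}_{\aleph_{0}}(k)$. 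Since $\mathsf{Tate}^{el}_{\aleph_{0}}(-)$ is $2$-functorial for exact functors of idempotent complete split exact categories — any such $F$ induces an exact functor on Ind-, Pro-, and hence Tate-categories by sending admissible monics (resp.\ epics) to admissible monics (resp.\ epics) — we obtain
\[
\mathsf{Tate}^{el}(T^{(n-1)}):\mathsf{Tate}^{el}_{\aleph_{0}}(\mathsf{Vect}_{f}(F))\longrightarrow \mathsf{Tate}^{el}_{\aleph_{0}}\bigl((n-1)\text{-}\mathsf{Tate}^{el}_{\aleph_{0}}(k)\bigr)=n\text{-}\mathsf{Tate}^{el}_{\aleph_{0}}(k).
\]
The desired functor is $T:=\mathsf{Tate}^{el}(T^{(n-1)})\circ T^{(1)}$, and both factors being exact gives exactness of the composition. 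The compatibility with the forgetful functor to $\mathsf{Vect}(k)$ is clear after evaluating the iterated colimits and limits, since at each level we are just realizing $V$ as the colimit of its $F[[t_n]]$-lattice filtration, and these lattices, being finitely generated over $F[[t_n]]$, are in turn filtered by their natural $F$-subspaces.

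The one point to check carefully is well-definedness of $T^{(1)}$: a different $F[[t_n]]$-lattice $L'$ yields the same object up to canonical isomorphism, because any two such lattices admit a common lattice containing them and a common lattice contained in them, producing a canonical equivalence between the associated Ind-Pro diagrams. Similarly, a $K$-linear map $V\to W$ automatically carries any given $F[[t_n]]$-lattice of $V$ into some $F[[t_n]]$-lattice of $W$ (use finite generation over $K$ and the discrete valuation on $t_n$), which is precisely the condition for defining a morphism of elementary Tate objects. This step is a direct application of the categorical formalism of \cite{TateObjectsExactCats} and is the main — but essentially routine — obstacle; it replaces the topological notion of continuity with the combinatorics of admissible lattices. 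No deeper input (such as the results on TLFs or Yekutieli's Theorem \ref{Thm_YekIndependenceOfSystemOfLiftings}) is needed here, since $K$ is already given as a concrete multiple Laurent series field with its standard filtration.
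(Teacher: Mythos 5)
Your proof is correct and follows the same route as the paper, whose entire argument for this lemma is the one-line observation that it follows from item (\ref{itemToep}) and induction on $n$; your write-up simply makes explicit the inductive step via the functoriality of $\mathsf{Tate}^{el}(-)$ for exact functors. The only cosmetic point is that $n$-$\mathsf{Tate}^{el}(k)$ is by definition $\mathsf{Tate}^{el}$ of the idempotent completion $(n-1)$-$\mathsf{Tate}(k)$ rather than of $(n-1)$-$\mathsf{Tate}^{el}(k)$, but composing with the canonical inclusion handles this.
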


\begin{proof}
This follows from property \ref{itemToep}, and induction on $n$.
\end{proof}

We abbreviate $V_{k}(n):=k((t_{1}))\cdots((t_{n}))$, and regard this
simultaneously as a TLF as well as an $n$-Tate object with the structure
provided in Example \ref{example_KatoIndProOfLaurentSeries}. Similarly, write
$t_{n}^{i}k((t_{1}))\cdots((t_{n-1}))[[t_{n}]]$ for the standard Yekutieli
lattices in it, regarding both as a Yekutieli lattice as well as the
Pro-object in $(n-1)$-Tate objects defined by it. Recall from Definition
\ref{Def_YekBeilCubAlgAndYekLattices} that a Yekutieli lattice in $V_{k}(n)$
is a finitely generated $k((t_{1}))\cdots((t_{n-1}))[[t_{n}]]$-submodule
$L\subset V_{k}(n)$ such that $k((t_{1}))\cdots((t_{n}))\cdot L=V_{k}(n)$.

\begin{lemma}
\label{lemma:Oclat}Every Yekutieli lattice of $V_{k}(n)$ is of the form
$t_{n}^{i}k((t_{1}))\cdots((t_{n-1}))[[t_{n}]]$. In particular, it is a free
$k((t_{1}))\cdots((t_{n-1}))[[t_{n}]]$-module of rank 1.
\end{lemma}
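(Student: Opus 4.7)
The plan is to observe that this is essentially the structure theorem for modules over a discrete valuation ring. Set $\mathcal{O}_1 := k((t_1))\cdots((t_{n-1}))[[t_n]]$. This ring is a discrete valuation ring with uniformizer $t_n$, and its field of fractions is exactly $K := k((t_1))\cdots((t_n)) = V_k(n)$. By Definition \ref{Def_YekBeilCubAlgAndYekLattices}, a Yekutieli lattice $L \subset V_k(n)$ is, by assumption, a finitely generated $\mathcal{O}_1$-submodule of $V_k(n)$ satisfying $K \cdot L = V_k(n)$. So the entire claim is a statement about finitely generated $\mathcal{O}_1$-submodules of the fraction field that span it over $K$.

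First I would note that $L \subset K$ is automatically torsion-free as an $\mathcal{O}_1$-module, since $K$ is an integral domain containing $\mathcal{O}_1$. The structure theorem for finitely generated modules over a PID then gives $L \cong \mathcal{O}_1^{\oplus r}$ for some $r \geq 0$. Next, tensoring the inclusion $L \hookrightarrow K$ with $K$ over $\mathcal{O}_1$ and using $K\cdot L = K$, one obtains $K^{\oplus r} \cong K \otimes_{\mathcal{O}_1} L = K$, forcing $r = 1$. Thus $L$ is a free $\mathcal{O}_1$-module of rank $1$ sitting inside $K$, which gives the second assertion of the lemma immediately.

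For the first assertion, such a rank one submodule has the form $L = x \mathcal{O}_1$ for some $x \in K^{\times}$. The valuation decomposition for the DVR $\mathcal{O}_1$ gives $K^{\times} = t_n^{\mathbf{Z}} \cdot \mathcal{O}_1^{\times}$, so write $x = t_n^i u$ with $i \in \mathbf{Z}$ and $u \in \mathcal{O}_1^{\times}$. Since $u$ is a unit, $x \mathcal{O}_1 = t_n^i \mathcal{O}_1$, yielding the desired form $L = t_n^i k((t_1))\cdots((t_{n-1}))[[t_n]]$.

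There is no real obstacle here: once one recognizes that $\mathcal{O}_1$ is a DVR with $K$ as its field of fractions, everything reduces to standard PID facts. The only thing to be slightly careful about is that the definition of Yekutieli lattice already builds in the hypotheses (finite generation over $\mathcal{O}_1$, containment in $V_k(n)$, and $K \cdot L = V_k(n)$), so no further structural input about $V_k(n)$ as a Tate object or ST module is required for this particular lemma.
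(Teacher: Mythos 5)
Your proof is correct and follows essentially the same route as the paper's: both reduce to the fact that $\mathcal{O}_{1}=k((t_{1}))\cdots((t_{n-1}))[[t_{n}]]$ is a discrete valuation ring with fraction field $V_{k}(n)$, so that the claim is an elementary statement about finitely generated $\mathcal{O}_{1}$-submodules of the fraction field. The paper argues slightly more directly --- choosing a generator of minimal $t_{n}$-adic order $\ell$ and showing it already generates $t_{n}^{\ell}\mathcal{O}_{1}=L$ --- whereas you package the same content via the structure theorem for finitely generated torsion-free modules over a PID together with a rank count after tensoring with $K$; both arguments are complete and interchangeable.
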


\begin{proof}
It suffices to assume $n=1$. For the general case, just replace the field $k$
by the field $k((t_{1}))\cdots((t_{n-1}))$ and replace the $k$-algebra
$k[[t]]$, by the $k((t_{1}))\cdots((t_{n-1}))$-algebra $k((t_{1}%
))\cdots((t_{n-1}))[[t_{n}]]$. Now, let $M\subset k((t))$ be a finitely
generated $k[[t]]$-sub-module such that $k((t))\cdot M=k((t))$. Let
$\{f_{1},\cdots,f_{m}\}$ be a set of generators for $M$ over $k[[t]]$.
Re-ordering as necessary, we can assume that ${\mathrm{ord}}_{t=0}f_{i}%
\leq{\mathrm{ord}}_{t=0}f_{i+1}$ for all $i$. Define $\ell:={\mathrm{ord}%
}_{t=0}f_{1}$. By definition, we have $M\subset t^{\ell}k[[t]]\subset k((t))$.
Conversely, because $k$ is a field, there exists a unit in $g\in
k[[t]]^{\times}$ such that $f_{1}g=t^{\ell}$. Because $t^{\ell}k[[t]]$ is a
cyclic $k[[t]]$-module generated by $t^{\ell}$, we conclude that $M\supset
t^{\ell}k[[t]]$ as well.
\end{proof}

\begin{lemma}
\label{lemma:Ocfin}Denote by $Gr^{\operatorname*{Yek}}(K)$ the partially
ordered set of Yekutieli lattices. There is a final and co-final inclusion of
partially ordered sets $Gr^{\operatorname*{Yek}}(K)\subset Gr(V_{k}(n))$,
where the latter denotes the Grassmannian of Tate lattices (i.e. the Sato
Grassmannian as defined in \cite{TateObjectsExactCats}).
\end{lemma}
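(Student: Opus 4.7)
The inclusion $Gr^{\operatorname*{Yek}}(K)\subset Gr(V_k(n))$ is essentially free from Lemma~\ref{lemma:Oclat}: every Yekutieli lattice equals some $t_n^i\mathcal{O}_{[n-1]}$ where $\mathcal{O}_{[n-1]}:=k((t_1))\cdots((t_{n-1}))[[t_n]]$, and the Ind--Pro presentation from Example \ref{example_KatoIndProOfLaurentSeries}, namely
\[
V_k(n)=\underset{i_n}{\underrightarrow{\operatorname*{colim}}}\,t_n^{-i_n}\mathcal{O}_{[n-1]},\qquad t_n^{-i_n}\mathcal{O}_{[n-1]}=\underset{j_n}{\underleftarrow{\lim}}\,\tfrac{1}{t_n^{i_n}}k((t_1))\cdots((t_{n-1}))[t_n]/t_n^{j_n},
\]
displays each $t_n^i\mathcal{O}_{[n-1]}$ as a Pro-subobject of $V_k(n)$ whose quotient is an Ind-object in $(n-1)$-$\mathsf{Tate}(\mathsf{Vect}_f)$, hence as a Tate lattice in the sense of Definition \ref{Def_TateLattice}.

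For the sandwich property, fix a Tate lattice $T\subset V_k(n)$ and produce integers $M\geq -N$ with $t_n^M\mathcal{O}_{[n-1]}\subset T\subset t_n^{-N}\mathcal{O}_{[n-1]}$. The \emph{upper bound} is where the Ind-side of the presentation enters: $T$ is a Pro-subobject of $V_k(n)$, so the inclusion $T\hookrightarrow V_k(n)$ is a morphism originating from a lattice. By \cite[Prop. 2.7 (1)]{bgwRelativeTateObjects}, used already in Example \ref{example_YekutieliIndPro}, such a morphism factors through a Pro-subobject of the target, and applying this to the Ind-colimit $V_k(n)=\operatorname{colim}_{i_n}t_n^{-i_n}\mathcal{O}_{[n-1]}$ yields an $N$ with $T\subset t_n^{-N}\mathcal{O}_{[n-1]}$.

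For the \emph{lower bound}, consider the composite
\[
\mathcal{O}_{[n-1]}\hookrightarrow V_k(n)\twoheadrightarrow V_k(n)/T.
\]
Its source is the Pro-object $\lim_{j_n}\mathcal{O}_{[n-1]}/t_n^{j_n}\mathcal{O}_{[n-1]}$ in $(n-1)$-$\mathsf{Tate}(\mathsf{Vect}_f)$; its target is an Ind-object there, since $T$ is a Tate lattice. By the standard computation
\[
\operatorname{Hom}\bigl(\underset{j}{\underleftarrow{\lim}}\,P_j,\;\underset{i}{\underrightarrow{\operatorname*{colim}}}\,I_i\bigr)=\underset{j}{\underrightarrow{\operatorname*{colim}}}\,\underset{i}{\underrightarrow{\operatorname*{colim}}}\,\operatorname{Hom}(P_j,I_i)
\]
in an Ind--Pro category, the composite factors through a single level $\mathcal{O}_{[n-1]}/t_n^M\mathcal{O}_{[n-1]}$ of the defining Pro-system; equivalently $t_n^M\mathcal{O}_{[n-1]}$ lies in the kernel, i.e.\ $t_n^M\mathcal{O}_{[n-1]}\subset T$.

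The main conceptual point, rather than any hard obstacle, is the pleasant symmetry: the upper bound exploits that morphisms from a lattice into an Ind-colimit of lattices factor through one of them, while the lower bound exploits that morphisms from a Pro-object to an Ind-object factor through a finite level of the Pro-system. Both facts are completely general in Ind--Pro categories, and the role of Lemma~\ref{lemma:Oclat} is exactly to identify $Gr^{\operatorname*{Yek}}(K)$ with the chain $\{t_n^i\mathcal{O}_{[n-1]}\}_{i\in\mathbf{Z}}$, which simultaneously serves as the Ind-colimit presentation of $V_k(n)$ (giving the upper bound) and as a cofinal Pro-system for the lattice $\mathcal{O}_{[n-1]}$ (giving the lower bound).
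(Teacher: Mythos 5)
Your argument is correct and follows essentially the same route as the paper: the inclusion comes from Lemma \ref{lemma:Oclat} plus the standard Ind-diagram, the upper bound is the left-filtering of $\mathsf{Pro}^a$ in the Tate category, and the lower bound is the factorization of $\mathcal{O}_{[n-1]}\hookrightarrow V_k(n)\twoheadrightarrow V_k(n)/T$ through a finite level of the Pro-system, concluded by the universal property of the kernel. The only cosmetic difference is that you package the paper's two-step factorization (first through a level of the Ind-diagram of $V_k(n)/T$, then through a finite quotient of $\mathcal{O}_{[n-1]}$) into a single Hom-set formula for maps from a Pro-object to an Ind-object.
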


\begin{proof}
The $n$-Tate object $V_{k}(n)$ is represented by the admissible Ind-diagram
\[
\cdots\hookrightarrow t_{n}^{i}V_{k}(n-1)[[t_{n}]]\hookrightarrow t_{n}%
^{i-1}V_{k}(n-1)[[t_{n}]]\hookrightarrow\cdots.
\]
We see that every Yekutieli lattice arises in this diagram. Therefore every
Yekutieli lattice is a Tate lattice of $V_{k}(n)$, i.e.
$Gr^{\operatorname*{Yek}}(K)\subset Gr(V_{k}(n))$. Further, by the definition
of Hom-sets in $n\text{-}\mathsf{Tate}_{\aleph_{0}}^{el}(k)$ (which implies
that the sub-category $\mathsf{Pro}^{a}((n-1)\text{-}\mathsf{Tate}_{\aleph
_{0}}^{el}(k))$ is left filtering), we see that every Tate lattice in
$V_{k}(n)$ factors through a Yekutieli lattice in the above diagram. Therefore
the sub-poset of Yekutieli lattices is final. It remains to show that every
Tate lattice $L$ of $V_{k}(n)$ contains a Yekutieli lattice. This will follow
from the same argument by which one shows that $\mathsf{Ind}^{a}(\mathcal{C})$
is right filtering in $\mathsf{Tate}^{el}(\mathcal{C})$ (cf. \cite[Proposition
5.10]{TateObjectsExactCats}). Denote by $\mathcal{O}_{1}(0)$ the Yekutieli
lattice $V_{k}(n-1)[[t_{n}]]\subset V_{k}(n)$. Consider the map
\[
\mathcal{O}_{1}(0)\hookrightarrow V_{k}(n)\twoheadrightarrow V_{k}%
(n)/L\text{.}%
\]
Because $\mathsf{Pro}^{a}((n-1)\text{-}\mathsf{Tate}_{\aleph_{0}}^{el}(k))$ is
left filtering in $\mathsf{n}\text{-}\mathsf{Tate}_{\aleph_{0}}^{el}(k)$,
there exists an $(n-1)$-Tate object $P$ such that the above map factors as
\[
\mathcal{O}_{1}(0)\rightarrow P\hookrightarrow V_{k}(n)/L.
\]
Further, $\mathcal{O}_{1}(0)$ is represented by the admissible Pro-diagram
\[
\cdots\mathcal{O}_{1}(0)/t_{n}^{i}\twoheadrightarrow\mathcal{O}_{1}%
(0)/t_{n}^{i-1}\twoheadrightarrow\cdots\twoheadrightarrow V_{k}(n-1)\text{.}%
\]
Therefore, by the definition of Hom-sets in $\mathsf{Pro}^{a}((n-1)\text{-}%
\mathsf{Tate}_{\aleph_{0}}^{el}(k))$ (which implies that the sub-category
$(n-1)\text{-}\mathsf{Tate}_{\aleph_{0}}^{el}(k)$ is right filtering), we see
that there exists $i$ such that the map $\mathcal{O}_{1}(0)\rightarrow P$
factors as
\[
\mathcal{O}_{1}(0)\twoheadrightarrow\mathcal{O}_{1}(0)/t_{n}^{i}\rightarrow P.
\]
By the universal property of kernels, we conclude that the Yekutieli lattice
$t_{n}^{i}\mathcal{O}_{1}(0)$ is a common Tate sub-lattice of $\mathcal{O}%
_{1}(0)$ and $L$.
\end{proof}

\begin{lemma}
\label{Lemma_LaurentYekIsTate1}For any $V_{1},V_{2}\in\mathsf{Vect}_{f}(K)$,
there is an equality of subsets of $\operatorname*{Hom}{}_{k}(V_{1},V_{2})$
\[
E^{\operatorname*{Yek}}(V_{1},V_{2})=\operatorname*{Hom}{}_{n\text{-}%
\mathsf{Tate}_{\aleph_{0}}^{el}(k)}(T(V_{1}),T(V_{2}))\text{,}%
\]
where $T$ denotes the functor of Lemma \ref{lemma_TLFforget}.
\end{lemma}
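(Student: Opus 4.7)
The plan is to argue by induction on $n$, using Lemma \ref{lemma:Ocfin} (Yekutieli lattices are final and co-final among Tate lattices in $V_k(n)$) as the essential bridge, and Lemma \ref{lemma:Oclat} (explicit shape of Yekutieli lattices) to match up the subquotient structures. The spirit is the same as in the proof of Theorem \ref{Thm_ComparisonOfCubicalAlgebras}(2), with the role of Proposition \ref{Prop_RelationOfLattices} now played by Lemma \ref{lemma:Ocfin}. The base case $n = 0$ is immediate: a $0$-local field is $k$, and both sides equal all $k$-linear maps by Definition \ref{Def_YekBeilCubAlgAndYekLattices}(2a) and Lemma \ref{lemma_TLFforget}.

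For the inductive step, the first task is to rephrase membership in $\operatorname{Hom}_{n\text{-}\mathsf{Tate}^{el}_{\aleph_{0}}(k)}(T(V_{1}),T(V_{2}))$ in a form parallel to Yekutieli's definition. A $k$-linear map $f: T(V_1) \to T(V_2)$ is a morphism of $n$-Tate objects if and only if for every pair of Tate lattices $L_{1} \subset V_{1}$, $L_{2} \subset V_{2}$ there exist refinements $L_{1}' \subset L_{1}$, $L_{2} \subset L_{2}'$ with $f(L_{1}') \subseteq L_{2}$ and $f(L_{1}) \subseteq L_{2}'$, and the induced map $L_{1}/L_{1}' \to L_{2}'/L_{2}$ is a morphism in $(n-1)\text{-}\mathsf{Tate}^{el}_{\aleph_{0}}(k)$. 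This reformulation follows from the filtering properties of $\mathsf{Pro}^{a}$ and $\mathsf{Ind}^{a}$ inside the Tate category (used already in the proof of Lemma \ref{lemma:Ocfin}), together with the universal property of kernels and cokernels of admissible maps. This matches Definition \ref{Def_YekBeilCubAlgAndYekLattices}(2b) verbatim except that ``lattice'' now means Tate lattice rather than Yekutieli lattice.

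By Lemma \ref{lemma:Ocfin}, the poset of Yekutieli lattices is final and co-final inside the Grassmannian of Tate lattices. Since each condition on $f$ is purely existential over lattice refinements, it may equivalently be tested using only Yekutieli lattices on both sides; this identifies the ``boundedness'' clauses. For the ``induced map on quotients'' clause, Lemma \ref{lemma:Oclat} shows that every Yekutieli lattice is of the form $t_{n}^{i} V_{k}(n-1)[[t_{n}]]$, so each subquotient $L/L'$ is a finite direct sum of copies of $V_{k}(n-1)$. Under the standard system of liftings $k((t_{1})) \cdots ((t_{n-1})) \hookrightarrow k((t_{1})) \cdots ((t_{n-1}))[[t_{n}]]$ intrinsic to the Laurent presentation, the natural $k((t_{1})) \cdots ((t_{n-1}))$-module structure on $L/L'$ coincides with the $(n-1)$-Tate object structure obtained from the forgetful functor $T$ of Lemma \ref{lemma_TLFforget}. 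Applying the inductive hypothesis to the subquotients identifies the two conditions on the induced map, closing the induction.

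The main obstacle is the second point: verifying that the $(n-1)$-Tate structure on $L/L'$ inherited as an admissible subquotient inside $T(V_{k}(n))$ agrees with the one arising from viewing $L/L'$ as a finite module over the $(n-1)$-local field $k((t_{1})) \cdots ((t_{n-1}))$ via the coefficient-field lifting. For the standard lifting this is a direct inspection using Lemma \ref{lemma:Oclat} and the explicit presentation of $V_{k}(n)$ in Example \ref{example_KatoIndProOfLaurentSeries}; if one wished to work with a non-standard lifting one would additionally invoke Theorem \ref{Thm_YekIndependenceOfSystemOfLiftings} to obtain independence. A secondary, purely formal step is the ``Tate morphism $\Longleftrightarrow$ compatible lattice refinements'' reformulation; this is an unwinding of the definition of Hom-sets in iterated $\mathsf{Ind}^{a}\mathsf{Pro}^{a}$-categories as already exploited in the proof of Lemma \ref{lemma:Ocfin}.
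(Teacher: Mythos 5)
Your proposal follows essentially the same route as the paper's proof: induction on $n$, with Lemma \ref{lemma:Ocfin} supplying the final/co-final comparison between Yekutieli and Tate lattices, Lemma \ref{lemma:Oclat} controlling the shape of the subquotients, and the identification of the two $(n-1)$-Tate structures on $L/L'$ handled exactly as you indicate (the paper gets this for free because the standard Ind-diagram of $V_k(n)$ is built from the standard lifting). The one place where you underestimate the work is the ``reformulation'' of membership in $\operatorname{Hom}_{n\text{-}\mathsf{Tate}^{el}_{\aleph_0}(k)}(T(V_1),T(V_2))$ as a lattice-refinement condition. The implication from ``a refinement with $(n-1)$-Tate induced map exists for every pair of lattices'' to ``$f$ is a genuine morphism of $n$-Tate objects'' is not a formal unwinding of the definition of Hom-sets; it is precisely the content of the harder half of the paper's argument. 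There one must construct, by induction on $\ell$, an ascending chain of Yekutieli lattices $L_0'\hookrightarrow L_1'\hookrightarrow\cdots$ that is final in the Sato Grassmannian and satisfies $\varphi(L_\ell)\subseteq L_\ell'$, verify that each restriction $L_\ell\rightarrow L_\ell'$ is a morphism of admissible Pro-objects by taking the limit of the induced maps on quotients and using that the forgetful functor preserves the relevant limits and is injective on Hom-sets, and only then pass to the colimit to recover $\varphi$ as a map of $n$-Tate objects. Your sketch becomes a complete proof once this assembly is written out; as it stands, the step you label ``secondary, purely formal'' is where the proof actually lives.
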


\begin{remark}
A key fact used in the statement and proof of this theorem is that the
forgetful functor $\left.  n\text{-}\mathsf{Tate}_{\aleph_{0}}^{el}\right.
(k)\rightarrow\mathsf{Vect}(k)$ is injective on Hom-sets. This is immediate
for $n=1$, and for $n>1$, it follows by induction.
\end{remark}

\begin{proof}
We prove this by induction on $n$. For $n=0$, there is nothing to show. For
the induction step, by the universal properties of direct sums, it suffices to
show the equality for $V:=V_{1}=V_{2}=k((t_{1}))\cdots((t_{n}))$.\medskip

\textit{Proof of sub-claim: }The compatibility of $E_{\sigma}%
^{\operatorname*{Yek}}(-,-)$ with direct sums in both variables is a
straightforward induction on $n$: for $n=0$, this is immediate (since we are
just considering homomorphisms of finite-dimensional vector spaces). For the
induction step, we first observe that the definition of Yekutieli lattices
implies that every lattice $L\subset V_{1}\oplus V_{2}$ is of the form
$L_{1}\oplus L_{2}$, where $L_{i}\subset V_{i}$ is a Yekutieli (the splitting
on $L$ is induced by the splitting on $V$). This, plus the induction
hypothesis, shows that%
\[
E_{\sigma}^{\operatorname*{Yek}}(W,V_{1}\oplus V_{2})\subset E_{\sigma
}^{\operatorname*{Yek}}(W,V_{1})\times E_{\sigma}^{\operatorname*{Yek}%
}(W,V_{2})
\]
and vice versa, and similarly with $W$ and $V_{1}\oplus V_{2}$ interchanged).
This finishes the proof of the sub-claim.\medskip

Note that for these $V$, $T(V):=V_{k}(n)$. We begin by showing that
$\operatorname*{End}\nolimits_{n\text{-}\mathsf{Tate}_{\aleph_{0}}^{el}%
(k)}(V_{k}(n))\subset E^{\operatorname*{Yek}}(V)$. Let $\varphi$ be an
endomorphism of $V_{k}(n)$. Let $L_{1}=t_{n}^{i_{1}}V_{k}(n-1)[[t_{n}]]$ and
$L_{2}=t_{n}^{i_{2}}V_{k}(n-1)[[t_{n}]]$ be a pair of Yekutieli lattices of
$k((t_{1}))\cdots((t_{n}))$. We begin by showing that this pair admits a
$\varphi$-refinement (see Definition \ref{Def_YekBeilCubAlgAndYekLattices}).
By the standard Ind-diagram for $V_{k}(n)$, and the definition of Hom-sets in
$n$-$\mathsf{Tate}_{\aleph_{0}}^{el}(k)$, there exists a Yekutieli lattice
$N=t_{n}^{j}V_{k}(n-1)[[t_{n}]]$ such that $\varphi(L_{1})\subset N$. Let
$i_{2}^{\prime}=\min(j,i_{2})$, and define $L_{2}^{\prime};=t_{n}%
^{i_{2}^{\prime}}V_{k}(n-1)[[t_{n}]]$. Next, consider the map $L_{1}%
\overset{\varphi}{\rightarrow}L_{2}^{\prime}/L_{2}$. The quotient
$L_{2}^{\prime}/L_{2}\cong V_{k}(n-1)[[t_{n}]]/(t_{n}^{i_{2}-i_{2}^{\prime}})$
is an elementary $(n-1)$-Tate space. By the definition of Hom-sets in
$\mathsf{Pro}^{a}((n-1)\text{-}\mathsf{Tate}_{\aleph_{0}}^{el}(k))$ (which
implies that the sub-category of $(n-1)$-Tate spaces is right filtering), the
map above factors through an admissible epic in $\mathsf{Pro}^{a}%
((n-1)\text{-}\mathsf{Tate}_{\aleph_{0}}^{el}(k))$
\[
L_{1}\twoheadrightarrow L_{1}/t_{n}^{\ell}L_{1}\overset{\overline{\varphi}%
}{\rightarrow}L_{2}^{\prime}/L_{2}\text{.}%
\]
We define $L_{1}^{\prime}=t_{n}^{i_{1}+\ell}V_{k}(n-1)[[t_{n}]]$, and observe
that $(L_{1}^{\prime},L_{2}^{\prime})$ $\varphi$-refines $(L_{1},L_{2})$.
Furthermore, because $(n-1)\text{-}\mathsf{Tate}_{\aleph_{0}}^{el}(k)$ is a
full sub-category of $\mathsf{Pro}^{a}((n-1)\text{-}\mathsf{Tate}_{\aleph_{0}%
}^{el}(k))$, the map $\overline{\varphi}$ is a map of $(n-1)$-Tate spaces. By
the inductive hypothesis, this map is an element in $E^{\operatorname*{Yek}%
}(L_{1}/t_{n}^{\ell}L_{1},L_{2}^{\prime}/L_{2})$. We conclude that
\[
\operatorname*{End}{}_{n\text{-}\mathsf{Tate}_{\aleph_{0}}^{el}(k)}%
(V_{k}(n))\subset E^{\operatorname*{Yek}}(k((t_{1}))\cdots((t_{n}))).
\]
To complete the induction step, it remains to show the reverse inclusion. Let
$\varphi\in E^{\operatorname*{Yek}}(K)$. We begin by showing that, given any
two Yekutieli lattices $L_{1}$ and $L_{2}$ such that $\varphi(L_{1})\subset
L_{2}$, then the map $L_{1}\overset{\varphi}{\rightarrow}L_{2}$ is a map of
admissible Pro-objects (in $(n-1)$-Tate spaces). By Lemma \ref{lemma:Oclat},
$L_{a}\cong t_{n}^{i_{a}}V_{k}(n-1)[[t_{n}]]$ for $a=1,2$. By the definition
of Yekutieli's $E^{\operatorname*{Yek}}$, Definition
\ref{Def_YekBeilCubAlgAndYekLattices}, for each $\ell>0$, there exists a
$\varphi$-refinement $(L_{1}^{\ell},L_{2}^{\ell})$ of the pair $(L_{1}%
,t_{n}^{\ell}L_{2})$. Without loss of generality, we can take $L_{2}^{\ell
}=L_{2}$, and we therefore obtain a square
\[%
\begin{xy} \square[L_1^\ell`t_n^\ell L_2`L_1`L_2;\varphi```\varphi] \end{xy}%
\]
By the definition of local $BT$-operators, for all $\ell\geq0$, the induced
map $L_{1}/L_{1}^{\ell}\rightarrow L_{2}/t_{n}^{\ell}L_{2}$ is a local
$BT$-operator, and thus, by induction hypothesis, a map of $(n-1)$-Tate
spaces. Because an inclusion of Yekutieli lattices is an admissible monic of
admissible Pro-objects (e.g. by Lemma \ref{lemma:Oclat}), for all $\ell\geq0$,
the map
\[
L_{1}\twoheadrightarrow L_{1}/L_{1}^{\ell}\rightarrow L_{2}/t_{n}^{\ell}L_{2}%
\]
is a map of admissible Pro-objects. Taking the limit over all $\ell$ (in
$\mathsf{Pro}^{a}((n-1)\text{-}\mathsf{Tate}_{\aleph_{0}}^{el}(k))$), we
obtain a map of admissible Pro-objects $L_{1}\rightarrow\lim_{\ell}L_{2}%
/t_{n}^{\ell}L_{2}\cong L_{2}$. The forgetful functor
\[
\mathsf{Pro}^{a}((n-1)\text{-}\mathsf{Tate}_{\aleph_{0}}^{el}(k))\rightarrow
\mathsf{Vect}(k)
\]
preserves limits (by construction, see Remark
\ref{remark_LimitPreservationInProof}). Therefore, we conclude that the map of
$k$-vector spaces underlying the map of admissible Pro-objects is equal to the
limit of the maps
\[
L_{1}\twoheadrightarrow L_{1}/L_{1}^{\ell}\rightarrow L_{2}/t_{n}^{\ell}L_{2}%
\]
but this is just $\varphi$. We have shown that $\varphi$ restricts to a map of
admissible Pro-objects on any pair of lattices $L_{1}$ and $L_{2}$ such that
$\varphi(L_{1})\subset L_{2}$. It remains to show that $\varphi$ is a map of
$n$-Tate spaces. Let $L_{\ell}=t_{n}^{\ell}V_{k}(n-1)[[t_{n}]]$. Then
$\ell\mapsto L_{\ell}$ is an admissible Ind-diagram (in $\mathsf{Pro}%
^{a}((n-1)\text{-}\mathsf{Tate}_{\aleph_{0}}^{el}(k))$) representing
$V_{k}(n)$. By inducting on $\ell$, we now construct a second admissible
Ind-diagram $\ell\mapsto L_{\ell}^{\prime}$ representing $V_{k}(n)$ such that
$\varphi$ lifts to a map of these diagrams. For the base case, by the
definition of local $BT$-operators, there exists a pair of Yekutieli lattices
$(L_{-1},L_{0}^{\prime})$ which $\varphi$-refine $(L_{0},L_{0})$. In
particular, $\varphi(L_{0})\subset L_{0}^{\prime}$. For the induction step,
suppose we have constructed an ascending chain of inclusions of Yekutieli
lattices
\[
L_{0}^{\prime}\hookrightarrow\cdots\hookrightarrow L_{n}^{\prime}%
\]
such that $\varphi(L_{i}),L_{i}\subset L_{i}^{\prime}$ for $i\leq n$. Consider
the pair of Yekutieli lattices $(L_{n+1},L_{n}^{\prime})$. Then there exists a
pair of Yekutieli lattices $(L_{a},L_{b})$ which $\varphi$-refines this pair.
Further (e.g. by Lemma \ref{lemma:Oclat}), there exists a Yekutieli lattice
$L_{n+1}^{\prime}$ which contains both $L_{b}$ and $L_{n+1}$. This completes
the induction step. Above we have shown that the maps $L_{\ell}\overset
{\varphi}{\rightarrow}L_{\ell}^{\prime}$ are maps of admissible Pro-objects
(in $(n-1)$-Tate spaces) for each $\ell$. Therefore, we conclude that
$\varphi$ lifts to a map of admissible Ind-diagrams. By construction, the
ascending chain of lattices
\[
L_{0}^{\prime}\hookrightarrow\cdots\hookrightarrow L_{\ell}^{\prime
}\hookrightarrow\cdots
\]
is final in the Grassmannian of Tate lattices $Gr(V_{k}(n))$ (because the
chain $L_{0}\hookrightarrow\cdots\hookrightarrow L_{\ell}\hookrightarrow
\cdots$ is). We conclude that $V_{k}(n)$ is the colimit of this ascending
chain, and that the map of colimits
\[
V_{k}(n)\cong\underset{\ell}{\underrightarrow{\operatorname*{colim}}}%
\,L_{\ell}\rightarrow\underset{\ell}{\underrightarrow{\operatorname*{colim}}%
}\,L_{\ell}^{\prime}\cong V_{k}(n)
\]
is a map of $n$-Tate spaces. But, this map is equal to $\varphi$ (e.g. because
the forgetful map $n$-$\mathsf{Tate}_{\aleph_{0}}^{el}(k)\rightarrow
\mathsf{Vect}(k)$ preserves colimits, by construction, cf. Remark
\ref{remark_LimitPreservationInProof}). We conclude that%
\[
E^{\operatorname*{Yek}}(k((t_{1}))\cdots((t_{n})))\subset\operatorname*{End}%
\nolimits_{n\text{-}\mathsf{Tate}_{\aleph_{0}}^{el}(k)}(V_{k}(n))\text{.}%
\]
This finishes the proof.
\end{proof}

\begin{remark}
\label{remark_LimitPreservationInProof}Let us provide some details on the
preservation of (co-)limits: Suppose $\mathcal{D}$ is a complete and
co-complete category. For any exact category $\mathcal{C}$, $\mathsf{Pro}%
^{a}\mathcal{C}$ is a full sub-category of the category of right-exact
co-sheaves on $\mathcal{C}$ \cite{TateObjectsExactCats}. As such, any functor
$\mathcal{C}\rightarrow\mathcal{D}$ extends uniquely to a limit-preserving
functor $\mathsf{Pro}^{a}\mathcal{C}\rightarrow\mathcal{D}$. We emphasize that
this limit preservation refers to the category of Pro-objects, i.e. it makes
no statements about limits taken inside of $\mathcal{C}$ (taking limits in
$\mathcal{C}$ or $\mathsf{Pro}^{a}\mathcal{C}$ usually yields different
outcomes). Similarly, any functor $\mathcal{C}\rightarrow\mathcal{D}$ extends
uniquely to a colimit-preserving functor $\mathsf{Ind}^{a}\mathcal{C}%
\rightarrow\mathcal{D}$. By the evaluation of limits and colimits, we have
functors%
\[
\left.  (n-1)\text{-}\mathsf{Tate}(k)\right.  \rightarrow\mathsf{Vect}%
(k)\text{,}%
\]
and these canonically induce limit-preserving functors%
\[
\mathsf{Pro}^{a}(\left.  (n-1)\text{-}\mathsf{Tate}(k)\right.  )\rightarrow
\mathsf{Vect}(k)
\]
and colimit-preserving functors%
\[
\left.  n\text{-}\mathsf{Tate}(k)\right.  \rightarrow\mathsf{Ind}%
^{a}\mathsf{Pro}^{a}(\left.  (n-1)\text{-}\mathsf{Tate}(k)\right.
)\rightarrow\mathsf{Vect}(k)\text{.}%
\]

\end{remark}

\begin{lemma}
\label{Lemma_LaurentYekIsTate2}For any $V_{1},V_{2}\in\mathsf{Vect}_{f}(K)$,
the equality
\[
E^{\operatorname*{Yek}}(V_{1},V_{2})=\operatorname*{Hom}\nolimits_{n\text{-}%
\mathsf{Tate}_{\aleph_{0}}^{el}(k)}(T(V_{1}),T(V_{2}))
\]
of Lemma \ref{Lemma_LaurentYekIsTate1} restricts to an equality of two-sided
ideals
\[
I_{i,\operatorname*{Yek}}^{\pm}(V_{1},V_{2})=I_{i,\operatorname*{Tate}}^{\pm
}(T(V_{1}),T(V_{2}))\text{.}%
\]
for $1\leq i\leq n$.
\end{lemma}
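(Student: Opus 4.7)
The plan is to induct on $n$. The base case $n=0$ is vacuous, since in that case there are no ideals $I_i^{\pm}$ to check. For the inductive step, by the additivity of both ideal systems under finite direct sums of finite $K$-modules, I can reduce to the case $V_1 = V_2 = K$, where $T(K) = V_k(n)$. From here on, I treat the $i=1$ case by hand and the $i \geq 2$ case by induction on $n$.

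For $i=1$, observe that $I_{1,\operatorname{Tate}}^{+}$ is by construction (cf.\ Theorem \ref{Thm_TateEndosAreCubicalAlgebras} and \cite{bgwTateModule}) the ideal of endomorphisms of $V_k(n)$ whose image factors through some Tate lattice, while $I_{1,\operatorname{Yek}}^{+}$ consists of those $f$ with $f(V_k(n))\subseteq L$ for some Yekutieli lattice $L$. By Lemma \ref{lemma:Ocfin}, every Tate lattice contains a Yekutieli lattice and is contained in a Yekutieli lattice, so the two boundedness conditions define the same subset of $\operatorname{Hom}_k(K,K)$. The case of $I_{1}^{-}$ is dual: $I_{1,\operatorname{Tate}}^{-}$ (resp.\ $I_{1,\operatorname{Yek}}^{-}$) consists of maps vanishing on some Tate (resp.\ Yekutieli) lattice, and these coincide by the same final/co-final property.

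For $i\geq 2$, I unravel the inductive definitions. For the Yekutieli version, $f\in I_{i,\operatorname{Yek}}^{\pm}$ means that for every pair of Yekutieli lattices $(L_1,L_2)$ and every $f$-refinement $(L_1',L_2')$, the induced map $\bar f\colon L_1/L_1'\to L_2'/L_2$ (viewed over $k_1=k((t_1))\cdots((t_{n-1}))$ via the standard lifting) lies in $I_{i-1,\operatorname{Yek}}^{\pm}(L_1/L_1',L_2'/L_2)$ at level $n-1$. For the Tate version the condition is identical but with Tate lattices, and $\bar f$ read as a morphism of $(n-1)$-Tate objects. By Lemma \ref{lemma:Oclat}, the Yekutieli lattices are exactly $t_n^{a}V_k(n-1)[[t_n]]$, and
\[
t_n^{a}V_k(n-1)[[t_n]]\bigm/t_n^{b}V_k(n-1)[[t_n]]\;\cong\;V_k(n-1)^{\oplus(b-a)}
\]
carries simultaneously (i) the $k((t_1))\cdots((t_{n-1}))$-module structure used by Yekutieli (via the standard lifting $V_k(n-1)\hookrightarrow V_k(n-1)[[t_n]]$) and (ii) the canonical $(n-1)$-Tate object structure inherited from the admissible Ind-Pro presentation of $V_k(n)$. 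Since these structures are literally the same finite direct sum of copies of $V_k(n-1)$, the inductive hypothesis applied to the $(n-1)$-dimensional Laurent series field identifies the level-$(n-1)$ ideals that $\bar f$ is required to lie in.

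The last piece is to verify that the quantifications match: the Yekutieli condition ranges over all Yekutieli $f$-refinements, while the Tate condition ranges over all Tate $f$-refinements. Here I appeal again to Lemma \ref{lemma:Ocfin}: given any Tate lattices $L_1, L_2$ (resp.\ $L_1', L_2'$), one can squeeze in Yekutieli sub- and super-lattices with the same functional role, and conversely any Yekutieli lattice is a Tate lattice. Consequently the universal quantifiers in the two definitions select the same $f$, and the inductive step closes. I expect the only genuinely delicate point to be this squeezing argument in the $i\geq 2$ case, where one must confirm that replacing a Tate refinement by a Yekutieli refinement does not change the validity of the condition on $\bar f$; this follows because the ideals at level $n-1$ are themselves stable under enlarging or shrinking the source/target by a common sub-quotient, which is again guaranteed by the co-finality statement in Lemma \ref{lemma:Ocfin} applied recursively.
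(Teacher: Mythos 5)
Your plan is correct and follows essentially the same route as the paper's proof: induction on $n$, reduction to $V_1=V_2=K$, the easy inclusion from every Yekutieli lattice being a Tate lattice, the case $i=1$ via the final/co-final sandwich of Lemma \ref{lemma:Ocfin}, and the case $i\geq 2$ by squeezing a Tate refinement between Yekutieli refinements. The one spot where your justification is off is the final transfer step: passing the condition on $\overline{f}$ from the sandwiching Yekutieli sub-quotient $N_{1,b}/N_{1,a}\to N_{2,b}/N_{2,a}$ down to $L_1/L_1'\to L_2'/L_2$ relies on the fact that $I_{i-1}^{\pm}$ is a categorical (two-sided) ideal, i.e.\ stable under pre- and post-composition with the split comparison maps (the paper cites \cite[Lemma 4.16 (2)]{MR3317764} for this), not on co-finality ``applied recursively.''
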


\begin{proof}
We prove this by induction on $n$. For $n=0$, there is nothing to show.
Because every Yekutieli lattice of $V$ induces a Tate lattice of $V_{k}(n)$,
knowing any conditions defining $I_{i}^{\pm}$ for all Tate lattices, implies
it for all Yekutieli lattices. Thus, we immediately get%
\[
I_{i,\operatorname*{Yek}}^{\pm}(V_{1},V_{2})\supseteq
I_{i,\operatorname*{Tate}}^{\pm}(T(V_{1}),T(V_{2}))
\]
The converse direction is a bit more involved. Not every Tate lattice is a
Yekutieli lattice, but with the help of Lemma \ref{lemma:Ocfin}\ we shall
reduce checking conditions for Tate lattices to Yekutieli lattices. Suppose we
want to check whether $\varphi\in I_{i,\operatorname*{Tate}}^{\pm}%
(T(V_{1}),T(V_{2}))$ holds. For $i=1$, Lemma \ref{lemma:Ocfin} implies that
having image contained in a Yekutieli lattice is the same as having image
contained in a Tate lattice, and analogously for kernels. Thus, to deal with
$i=2,\ldots,n$ we only need to confirm that this argument survives
refinements: We know that if $L_{1}\subset T(V_{1})$, $L_{2}\subset T(V_{2})$
are Tate lattices and we pick Tate lattices $L_{1}^{\prime}\subset T(V_{1})$,
$L_{2}^{\prime}\subset T(V_{2})$ such that%
\[
L_{1}^{\prime}\subseteq L_{1},\qquad L_{2}\subseteq L_{2}^{\prime},\qquad
f(L_{1}^{\prime})\subseteq L_{2},\qquad f(L_{1})\subseteq L_{2}^{\prime
}\text{,}%
\]
we have the $f$-refinement%
\[
\overline{f}:L_{1}/L_{1}^{\prime}\rightarrow L_{2}^{\prime}/L_{2}\text{.}%
\]
We need to show that $\overline{f}\in I_{i-1,\operatorname*{Tate}}^{\pm}%
(L_{1}/L_{1}^{\prime},L_{2}^{\prime}/L_{2})$, just assuming this holds
whenever all of the above lattices are also Yekutieli lattices. So let
$L_{1},L_{2}$ be Tate lattices for which we want to check the defining
property. By Lemma \ref{lemma:Ocfin}, there exist Yekutieli lattices
$N_{2,a}\subset L_{2}$ and $N_{1,b}\supset L_{1}$. Also, by Lemma
\ref{lemma:Ocfin}, we can choose a $\varphi$-refinement $(N_{1,a},L_{2})$ of
$(L_{1}^{\prime},N_{2,a})$ with $N_{1,a}$ a Yekutieli lattice, and we can also
choose a $\varphi$-refinement $(L_{1},N_{2,b})$ of $(N_{1,b},L_{2}^{\prime})$
with $N_{2,b}$ a Yekutieli lattice. These refinements define a commuting
diagram
\[%
\xymatrix{ N_{1,b}/N_{1,a} \ar[r] & N_{2,b}/N_{2,a} \\ L_1/N_{2,a} \ar
[u] \ar[r] \ar[d] & L_2'/N_{2,a}\ar[u] \ar[d]\\ L_1/L_1' \ar[r]^{\overline
{\varphi}} & L_2'/L_2. }%
\]
By assumption, the top horizontal map is in $I_{i-1,\operatorname*{Yek}}^{\pm
}(N_{1,b}/N_{1,a},N_{2,b}/N_{2,a})$. Further, the upper vertical arrows are
admissible monics, while the lower vertical arrows are admissible epics. In
particular, all the vertical maps split, so we have a commuting diagram
\[%
\xymatrix{ N_{1,b}/N_{1,a} \ar[r] & N_{2,b}/N_{2,a} \ar[d] \\ L_1/L_1' \ar
[u] \ar[r] & L_2'/L_2. }%
\]
in which the top map is in $I_{i-1,\operatorname*{Yek}}^{\pm}(N_{1,b}%
/N_{1,a},N_{2,b}/N_{2,a})$. Because this is a categorical ideal \cite[Lemma
4.16 (2)]{MR3317764}, we conclude that the bottom map is in
$I_{i-1,\operatorname*{Tate}}^{\pm}(L_{1}/L_{1}^{\prime},L_{2}^{\prime}%
/L_{2})$ as claimed.
\end{proof}

Of course combining Lemma \ref{Lemma_LaurentYekIsTate1} with Lemma
\ref{Lemma_LaurentYekIsTate2} implies:

\begin{theorem}
\label{Theorem_LaurentYekIsTate}The functor%
\[
T:\mathsf{Vect}_{f}(K)\rightarrow\left.  n\text{-}\mathsf{Tate}_{\aleph_{0}%
}^{el}\right.  (k)
\]
induces canonical isomorphisms%
\[
E^{\operatorname*{Yek}}(V_{1},V_{2})\cong\operatorname*{Hom}\nolimits_{\left.
n\text{-}\mathsf{Tate}_{\aleph_{0}}^{el}(k)\right.  }(T(V_{1}),T(V_{2}))
\]
so that for $V_{1}=V_{2}$ this becomes an isomorphism of Beilinson cubical algebras.
\end{theorem}

This finishes the comparison.

\begin{example}
[Osipov, Yekutieli]\label{example_PreserveTateImpliesPreserveTopology}%
Yekutieli has shown that elements in $E^{\operatorname*{Yek}}(V_{1},V_{2})$
are morphisms of ST\ modules, i.e. they are continuous in the ST\ topology
\cite[Thm. 4.24]{MR3317764}. However, he also proved that
$E^{\operatorname*{Yek}}(V_{1},V_{2})$ is strictly smaller than the algebra of
all ST\ module homomorphisms for $n\geq2$ \cite[Example 4.12 and
following]{MR3317764}. This generalizes an observation due to Osipov, who had
established the corresponding statements for Laurent series with Parshin's
natural topology \cite[\S 2.3]{MR2314612}.
\end{example}

\subsection{Variant: TLFs}

Instead of working with an explicit model like $k((t_{1}))\cdots((t_{n}))$ we
can also work with a general TLF. Firstly, recall that this forces us to
assume that the base field $k$ is perfect. Even though we cannot associate an
$n$-Tate vector space over $k$ to a TLF directly, we can do so using
Yekutieli's concept of a system of liftings:

\begin{definition}
\label{def_TLFTateObject}Let $k$ be a perfect field. Moreover, let $K$ be an
$n$-dimensional TLF over $k$ and $\sigma=(\sigma_{1},\ldots,\sigma_{n})$ a
system of liftings in the sense of Yekutieli. Suppose $V$ is a
finite-dimensional $K$-vector space.

\begin{enumerate}
\item If $n=0$, $K=k$ and every finite-dimensional $k$-vector space is
literally a $0$-Tate object over $\mathsf{Vect}_{f}(k)$.

\item If $n\geq1$, the ring of integers $\mathcal{O}_{1}:=\mathcal{O}_{1}(K)$
is a (not finitely generated) $k_{1}(K)$-module. Let $b_{1},\ldots,b_{r}$ be
any $K$-basis of $V$ and $\mathcal{O}_{1}\otimes\left\{  b_{1},\ldots
,b_{r}\right\}  $ its $\mathcal{O}_{1}$-span inside $V$. We can partially
order all such bases by the inclusion relation among their $\mathcal{O}_{1}%
$-spans. Note that each
\[
\left(  \mathcal{O}_{1}\otimes\left\{  b_{1},\ldots,b_{r}\right\}  \right)
/\mathfrak{m}_{1}^{m}%
\]
is a finite torsion $\mathcal{O}_{1}$-module and thus a finite-dimensional
$k_{1}(K)$-vector space by the lifting $\sigma_{1}$.

\item Thus, if we assume that each finite-dimensional vector space $V$ over
the $(n-1)$-dimensional TLF $k_{1}(K)$ along with the system of liftings
$(\sigma_{2},\ldots,\sigma_{n})$ comes with a fixed model, denoted $V^{\sharp
}$, as an $(n-1)$-Tate object in $k$-vector spaces,%
\begin{equation}
\underset{b_{1},\ldots,b_{r}}{\underrightarrow{\operatorname*{colim}}%
}\underset{m}{\underleftarrow{\lim}}\left(  \left(  \mathcal{O}_{1}%
\otimes\left\{  b_{1},\ldots,b_{r}\right\}  \right)  /\mathfrak{m}_{1}%
^{m}\right)  ^{\sharp}\label{lcca1}%
\end{equation}
defines an $n$-Tate object in $k$-vector spaces.

\item Inductively, this associates a canonical $n$-Tate object to each
finite-dimensional $K$-vector space (but depending on the chosen system of liftings).
\end{enumerate}
\end{definition}

It is easy to check that the colimit over the bases $b_{1},\ldots,b_{r}$ is
filtering.\medskip

The technical result as well as the key idea underlying the proof of the
following is entirely due to Yekutieli:

\begin{theorem}
\label{Theorem_TLFPlusSysOfLiftingsYekIsTate}Let $k$ be a perfect field and
$K$ an $n$-dimensional TLF over $k$.

\begin{enumerate}
\item For any system of liftings $\sigma$, the construction in Definition
\ref{def_TLFTateObject} gives rise to a functor \textquotedblleft%
$\sharp_{\sigma}$\textquotedblright%
\[
\mathsf{Vect}_{f}(K)\overset{\sharp_{\sigma}}{\longrightarrow}\left.
n\text{-}\mathsf{Tate}^{el}(\mathsf{Vect}_{f}(k))\right.  \overset
{\operatorname{eval}}{\longrightarrow}\mathsf{Vect}(k)
\]
so that the composition agrees with the forgetful functor to $k$-vector spaces
as in Lemma \ref{lemma_TLFforget}.

\item For any $V_{1},V_{2}\in\mathsf{Vect}_{f}(K)$, the functor $\sharp
_{\sigma}$ induces an isomorphism%
\[
E^{\operatorname*{Yek}}(V_{1},V_{2})\overset{\sim}{\longrightarrow
}\operatorname*{Hom}\nolimits_{n\text{-}\mathsf{Tate}^{el}}(\sharp_{\sigma
}V_{1},\sharp_{\sigma}V_{2})\text{.}%
\]

\item For any two systems of liftings $\sigma,\sigma^{\prime}$, there exists
an $n$-Tate automorphism $e_{\sigma,\sigma^{\prime}}$ such that $\sharp
_{\sigma^{\prime}}=e_{\sigma,\sigma^{\prime}}\circ\sharp_{\sigma}$.

\item For any $V_{1},V_{2}$, the image of $\operatorname*{Hom}%
\nolimits_{n\text{-}\mathsf{Tate}^{el}}(\sharp_{\sigma}V_{1},\sharp_{\sigma
}V_{2})$ under `$\operatorname{eval}$' is independent of the choice of
$\sigma$, and agrees with $E^{\operatorname*{Yek}}(V_{1},V_{2})$.
\end{enumerate}
\end{theorem}

The interesting aspect of (3) is the existence of a \textit{canonical}
isomorphism. The existence of an abundance of rather random isomorphisms is
clear from the outset.

\begin{proof}
(1) and (2):\ The proof is basically a repetition of everything we have done
with $k((t_{1}))\cdots((t_{n}))$ in this section. The argument works basically
verbatim. Replace each $k((t_{1}))\cdots((t_{n}))$ by $K$, each $(-)[[t]]$ by
the respective ring of integers $\mathcal{O}$, and each power $t^{i}$ by
$\mathfrak{m}^{i}$ with $\mathfrak{m}$ the respective maximal ideal. The only
slight change is that in Equation \ref{lcca1} we take the colimit over all
bases $b_{1},\ldots,b_{r}$ in Lemma \ref{lemma:Ocfin}. Part (3) is deep in
principle, but easy for us since we can rely on the theory set up in
\cite{MR3317764}. In Definition \ref{def_TLFTateObject}, part (2), we can read
the finite $\mathcal{O}_{1}$-module%
\[
\left(  \mathcal{O}_{1}\otimes\left\{  b_{1},\ldots,b_{r}\right\}  \right)
/\mathfrak{m}_{1}^{m}%
\]
as a $k_{1}(K)$-vector space either by the lifting $\sigma$ or $\sigma
^{\prime}$. The assumptions of \cite[Theorem 2.8, (2)]{MR3317764} are
satisfied; the above is a finite $\mathcal{O}_{1}$-module and it is a precise
Artinian local ring by \cite[Lemma 3.14]{MR3317764}. By Yekutieli's theorem,
loc. cit., the identity automorphism on the module transforms the two
$k_{1}(K)$-vector space structures of $\sigma$ and $\sigma^{\prime}$ via
$\operatorname*{GL}\nolimits_{(-)}(\mathcal{D}_{K/k}^{\operatorname*{cont}})$
and by \cite[Lemma 4.11]{MR3317764} this lies in Yekutieli's $E^{K}=E_{\sigma
}^{K}$, i.e. our $E^{\operatorname*{Yek}}(K)$ (at this point in Yekutieli's
paper it has not yet been proven that this is independent of $\sigma$, but of
course we may already use this here). Finally, by part (2) this is nothing but
an automorphism as an $n$-Tate object, giving the desired $e_{\sigma
,\sigma^{\prime}}$. Part (4) follows from (3): The images just differ by an
inner automorphism, but that means that they are the same.
\end{proof}

\section{\label{sect_StructTheorems}Structure theorems}

\subsection{Structure of the ad\`{e}les}

In order to proceed, we shall need a few structural results about the
structure of the local ad\`{e}les. The following result

\begin{itemize}
\item is classical (and nearly trivial) in dimension one,

\item is due to Parshin in dimension two \cite{MR0419458},

\item is due to Beilinson in general \cite{MR565095}, but the proof remained unpublished,

\item and the first proof in print is due to Yekutieli \cite[\S 3,
3.3.2-3.3.6]{MR1213064}.
\end{itemize}

We shall give a self-contained proof in this paper $-$ needless to say,
following similar ideas than those used by Yekutieli $-$ but a number of steps
are done a bit differently and we strengthen parts of the results, especially
in view of Kato's ind-pro perspective (\S \ref{subsect_KatoIndProApproach}).

The following section relies on a number of standard facts from commutative
algebra. For the convenience of the reader, we will cite them from the
Appendix \S \ref{section_Appendix_ResultsFromComAlg}, where we have collected
the relevant material.\newline

\begin{definition}
A \emph{saturated flag} $\triangle$ in $X$ is a singleton set $\triangle
=\{(\eta_{0}>\cdots>\eta_{r})\}\subseteq S\left(  X\right)  _{r}$ such that
$\operatorname*{codim}_{X}\overline{\{\eta_{i}\}}=i$.
\end{definition}

Whenever we need to relate ad\`{e}les between different schemes, in order to
be sure what we mean, we write $A_{X}(-,\mathcal{-})$ to denote ad\`{e}les of
a scheme $X$. Note that flags $\eta_{0}>\cdots>\eta_{r}$ in $X$ also make
sense as flags for closed sub-schemes if all their entries are contained in them.

\begin{theorem}
[Structure Theorem]\label{TATE_StructureOfLocalAdelesProp}Suppose $X$ is a
Noetherian reduced excellent scheme of pure dimension $n$ and $\triangle
=\{(\eta_{0}>\cdots>\eta_{r})\}$ a saturated flag for some $r$.

\begin{enumerate}
\item \label{marker_adele_struct_one_partone}Then $A_{X}(\triangle
,\mathcal{O}_{X})$ is a finite direct product of $r$-local fields $\prod
K_{i}$ such that each last residue field is a finite field extension of
$\kappa(\eta_{r})$, the rational function field of $\overline{\{\eta_{r}%
\}}\subseteq X$. Moreover,%
\begin{equation}
A_{X}(\triangle^{\prime},\mathcal{O}_{X})\overset{(\ast)}{\subseteq}%
{\textstyle\prod}
\mathcal{O}_{i}\subseteq%
{\textstyle\prod}
K_{i}=A_{X}(\triangle,\mathcal{O}_{X})\text{,}\label{lCOA_D_3}%
\end{equation}
where $\mathcal{O}_{i}$ denotes the first ring of integers of $K_{i}$ and
$(\ast)$ is the normalization, a finite ring extension.

\item \label{marker_adele_struct_one_parttwo}If we regard $\triangle^{\prime}$
as a flag in the closed sub-scheme $\overline{\{\eta_{1}\}}$ instead, the
corresponding decomposition of Equation \ref{lCOA_D_3} exists for
$A_{\overline{\{\eta_{1}\}}}(\triangle^{\prime},\mathcal{O}_{\overline
{\{\eta_{1}\}}})$ as well, say%
\begin{equation}%
{\textstyle\prod}
k_{j}=A_{\overline{\{\eta_{1}\}}}(\triangle^{\prime},\mathcal{O}%
_{\overline{\{\eta_{1}\}}})\label{lCOA_D_3b}%
\end{equation}
(with a possibly different number of factors), and the residue fields of the
$\mathcal{O}_{i}$ in Equation \ref{lCOA_D_3} are finite extensions of these
field factors. Here to each $k_{j}$ correspond $\geq1$ factors in Equation
\ref{lCOA_D_3}.

\item If $X$ is of finite type over a field $k$, then each $K_{i}$ is
non-canonically ring isomorphic to $k^{\prime}((t_{1}))\cdots((t_{r}))$ for
$k^{\prime}/\kappa(\eta_{r})$ a finite field extension. If $k$ is perfect, it
can be promoted to a $k$-algebra isomorphism.

\item For a quasi-coherent sheaf $\mathcal{F}$, $A(\triangle,\mathcal{F}%
)\cong\mathcal{F}\otimes_{\mathcal{O}_{X}}A(\triangle,\mathcal{O}_{X})$.
\end{enumerate}
\end{theorem}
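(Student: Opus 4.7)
The plan is to prove the four parts in the natural order (4), (1), (2), (3), each building on the previous.

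For part (4), I would observe that the ad\`ele functor $A(\triangle,-)$ is constructed from three operations: filtered colimits over coherent sub-sheaves (see (\ref{lTX1})), countable Mittag-Leffler inverse limits along $\eta_i$-adic completions, and localizations at primes $\eta_i$. Localization is flat, and completion along an ideal in a Noetherian ring is flat. It follows that $A(\triangle,\mathcal{O}_X)$ is flat over $\mathcal{O}_X$. For coherent $\mathcal{F}$ the natural map $\mathcal{F}\otimes_{\mathcal{O}_X} A(\triangle,\mathcal{O}_X)\to A(\triangle,\mathcal{F})$ is an isomorphism by writing $\mathcal{F}$ as a cokernel of free sheaves and using exactness; the extension to quasi-coherent $\mathcal{F}$ is then immediate from (\ref{lTX1}) together with the fact that tensor products commute with filtered colimits.

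For parts (1) and (2), I would induct on $r$. The base case $r=0$ is immediate: since $X$ is reduced and $\eta_0$ is a generic point, $A_X(\{\eta_0\},\mathcal{O}_X)=\mathcal{O}_{X,\eta_0}=\kappa(\eta_0)$, a $0$-local field. For the inductive step, unwinding the recursive definition (\ref{TATEMATRIX_l6}) reduces the computation to an iterated localization--completion procedure: starting from $\mathcal{O}_{X,\eta_r}$, complete at $\eta_r$, pass to the semi-localization at the primes of the completion lying over $\eta_{r-1}$, complete again, and iterate. The algebraic inputs I would quote from the appendix on excellent rings are: (a) completion of an excellent reduced Noetherian local ring is again reduced; (b) a reduced complete Noetherian local ring has finitely many minimal primes, and each quotient by a minimal prime is a complete Noetherian local domain; (c) the normalization of a reduced excellent ring in its total ring of fractions is module-finite and decomposes as a product of normal domains indexed by the minimal primes. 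It is exactly this last point that yields the direct product $\prod K_i$ and accounts for the finite ring extension $(\ast)$ in (\ref{lCOA_D_3}): the ad\`ele $A_X(\triangle^{\prime},\mathcal{O}_X)$ need not itself be a product, but sits inside its normalization $\prod\mathcal{O}_i$ as a finite sub-ring. Compatibility (2) is traced through the induction because the primes of the completion of $\mathcal{O}_{X,\eta_r}$ lying over $\eta_{r-1}$ agree with those computed in $\overline{\{\eta_1\}}$, so the residue fields of the $\mathcal{O}_i$ match the factors $k_j$ of $A_{\overline{\{\eta_1\}}}(\triangle^{\prime},\mathcal{O}_{\overline{\{\eta_1\}}})$ up to finite extensions introduced by splittings at later stages.

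Part (3) is then a direct application of Cohen's Structure Theorem: since $X$ is of finite type over $k$, each $K_i$ is an equicharacteristic $r$-local field with last residue field $\kappa$ finite over $\kappa(\eta_r)$, so Proposition \ref{prop_CohenStructureThmForEquicharNLocalFields} yields a (non-canonical) ring isomorphism $K_i\simeq\kappa((t_1))\cdots((t_r))$. When $k$ is perfect, part (4) of Proposition \ref{Prop_CohenStructureTheorem} guarantees a coefficient field containing $k$, promoting the isomorphism to one of $k$-algebras.

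The main obstacle I expect is the bookkeeping around excellence and the repeated splittings at each step. Every completion can introduce new minimal primes, and every subsequent localization can further distribute them across local factors; keeping track of the resulting chains of residue field extensions, so that they line up with the factors produced by the parallel induction in $\overline{\{\eta_1\}}$ required for (2), is where the proof becomes technically delicate. This is essentially the same difficulty Yekutieli handles in \cite[3.3.2--3.3.6]{MR1213064}, and a close variant of his argument appears unavoidable.
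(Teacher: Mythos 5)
Your proposal is correct and follows essentially the same route as the paper: part (4) via flatness of localization/completion and commutation of tensor with colimits, parts (1)--(2) via the iterated localization--completion unravelling of the ad\`ele, with excellence supplying reducedness of the completions and finiteness of the normalization, idempotent splitting into local factors, and normalization producing the DVRs $\mathcal{O}_i$ indexed by minimal primes, and part (3) via Cohen's Structure Theorem. The only cosmetic difference is that the paper organizes the induction as a descending induction on the tail index $j$ of the flag (Lemma \ref{TATE_StructureOfLocalAdelesProp_KeyLemma}) rather than on the flag length $r$, which amounts to the same thing.
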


In claim (2) we state that for each field factor $k_{j}$ in Equation
\ref{lCOA_D_3b} there may be several field factors $K_{i}$ in Equation
\ref{lCOA_D_3}, but at least one, corresponding to it. In a concrete case such
a branching pattern may for example look like
\begin{equation}%
\raisebox{0.0061in}{\parbox[b]{1.2296in}{\begin{center}
\includegraphics[
height=0.6113in,
width=1.2296in
]%
{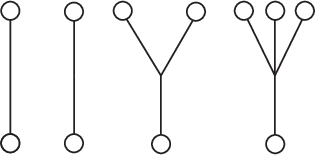}%
\\
{}%
\end{center}}}
\label{lDiag_Branching_1}%
\end{equation}
where the dots in the bottom row represent the field factors $k_{j}$, and the
dots of the top row the higher local fields $K_{i}$ corresponding the them,
that is: for each such factor the top ring of integers $\mathcal{O}%
_{i}\subseteq K_{i}$ has a finite field extension of $k_{j}$ as its respective
residue field.\medskip

We devote the entire section to the proof, split up into several
pieces.\medskip

Unravelling the inductive definition from Equation \ref{TATEMATRIX_l6} yields
the formula%
\begin{equation}
A_{X}(\triangle,\mathcal{F})=\underset{i_{0}}{\underleftarrow{\lim}}%
\underset{\eta_{0}}{\underrightarrow{\operatorname*{colim}}}\cdots
\underset{i_{r}}{\underleftarrow{\lim}}\underset{\eta_{r}}{\underrightarrow
{\operatorname*{colim}}}\,\mathcal{F}\otimes\mathcal{O}_{\left\langle \eta
_{r}\right\rangle }/\eta_{r}^{i_{r}}\underset{\mathcal{O}_{X}}{\otimes}%
\cdots\underset{\mathcal{O}_{X}}{\otimes}\mathcal{O}_{\left\langle \eta
_{0}\right\rangle }/\eta_{0}^{i_{0}}\text{,}\label{TATEMATRIX_l29}%
\end{equation}
where we have allowed ourselves the use of the following viewpoint/shorthands:

\begin{itemize}
\item As already the inner-most colimit corresponds to the localization at
$\eta_{r}$ (i.e. taking the stalk), we can henceforth work with rings and
modules instead of the scheme and its coherent sheaves. More precisely, we can
do this computation in $\mathcal{O}_{\eta_{r}}$-modules.

\item We (temporarily) use the notation%
\[
\mathcal{O}_{\eta_{a}}=\underset{\eta_{a}}{\underrightarrow
{\operatorname*{colim}}}\,\mathcal{O}_{\left\langle \eta_{a}\right\rangle }%
\]
for the system of finitely generated $\mathcal{O}_{\eta_{r}}$-submodules
$\mathcal{O}_{\left\langle \eta_{a}\right\rangle }\subseteq\mathcal{O}%
_{\eta_{a}}$.

\item We write $\eta_{i}$ not just for the scheme point $\eta_{i}$, but also
for its prime ideal $-$ under the transition to look at the stalk rather than
working with sheaves, the ideal sheaf of the reduced closed sub-scheme
$\overline{\{\eta_{i}\}}$ corresponds to a prime ideal.
\end{itemize}

Equation \ref{TATEMATRIX_l29}, the commutativity of tensor products with
colimits, and Lemma \ref{comalg_lemma_reminder}.\ref{comalg_fact_D1} settles
Theorem \ref{TATE_StructureOfLocalAdelesProp}, (4).\medskip

To proceed, let us consider the iterated limit/colimit%
\begin{equation}
\underset{i_{0}}{\underleftarrow{\lim}}\underset{\eta_{0}}{\underrightarrow
{\operatorname*{colim}}}\cdots\underset{i_{j-1}}{\underleftarrow{\lim}%
}\underset{\eta_{j-1}}{\underrightarrow{\operatorname*{colim}}}\,A_{j}%
\underset{\mathcal{O}_{\eta_{j}}}{\otimes}\mathcal{O}_{\left\langle \eta
_{j-1}\right\rangle }/\eta_{j-1}^{i_{r-1}}\underset{\mathcal{O}_{X}}{\otimes
}\cdots\underset{\mathcal{O}_{X}}{\otimes}\mathcal{O}_{\left\langle \eta
_{0}\right\rangle }/\eta_{0}^{i_{0}}\text{,}\label{TATEMATRIX_l15}%
\end{equation}
where $A_{j}$ is an $\mathcal{O}_{\eta_{j}}$-module yet to be defined.

\begin{example}
We had just seen that $A(\triangle,\mathcal{F})$ is of this shape for $j:=r$
and $A_{r}:=\mathcal{F}\otimes\widehat{\mathcal{O}_{\eta_{r}}}$.
\end{example}

As colimits commute with tensor products, we may rewrite the above expression
as%
\begin{align*}
& \cong\underset{i_{0}}{\underleftarrow{\lim}}\underset{\eta_{0}%
}{\underrightarrow{\operatorname*{colim}}}\cdots\underset{i_{j-1}%
}{\underleftarrow{\lim}}\,A_{j}\underset{\mathcal{O}_{\eta_{j}}}{\otimes
}(\underset{\eta_{j-1}}{\underrightarrow{\operatorname*{colim}}}%
\mathcal{O}_{\left\langle \eta_{r-1}\right\rangle })/\eta_{j-1}^{i_{r-1}%
}\underset{\mathcal{O}_{X}}{\otimes}\cdots\underset{\mathcal{O}_{X}}{\otimes
}\mathcal{O}_{\left\langle \eta_{0}\right\rangle }/\eta_{0}^{i_{0}}\\
& \cong\cdots\underset{\eta_{j-2}}{\underrightarrow{\operatorname*{colim}}%
}\,(\underset{i_{j-1}}{\underleftarrow{\lim}}\,A_{j}[(\mathcal{O}_{\eta_{j}%
}-\eta_{j-1})^{-1}]/\eta_{j-1}^{i_{j-1}})\underset{\mathcal{O}_{X}}{\otimes
}\cdots\underset{\mathcal{O}_{X}}{\otimes}\mathcal{O}_{\left\langle \eta
_{0}\right\rangle }/\eta_{0}^{i_{0}}%
\end{align*}
(as the colimit is just the localization $\mathcal{O}_{\eta_{r-1}}$ and then
use Lemma \ref{comalg_lemma_reminder}.\ref{comalg_fact_D2}). Then we have
recovered the shape of Equation \ref{TATEMATRIX_l15} for $j-1$. Hence,
inductively, $A_{X}(\triangle,\mathcal{F})=A_{0}$. Thus, Theorem
\ref{TATE_StructureOfLocalAdelesProp} is essentially a result on the structure
of $A_{0}$ for the special case $\mathcal{F}:=\mathcal{O}_{X}$.

\begin{definition}
\label{def_AjRingsShorthand}For the sake of an induction, we shall give the
following auxiliary rings a name:%
\begin{equation}
A_{j-1}:=\underset{i_{j-1}}{\underleftarrow{\lim}}A_{j}[(\mathcal{O}_{\eta
_{j}}-\eta_{j-1})^{-1}]/\eta_{j-1}^{i_{j-1}}\text{.}\label{TATEMATRIX_l17}%
\end{equation}
Equivalently, $A_{j}:=A(\eta_{j}>\cdots>\eta_{r},\mathcal{O}_{X})$ for $0\leq
j\leq r$.
\end{definition}

We now argue inductively along $j$:

\begin{lemma}
\label{TATE_StructureOfLocalAdelesProp_KeyLemma}Assume for some $j$ we have
shown the following:

\begin{enumerate}
\item $A_{j}$ is a faithfully flat Noetherian $\mathcal{O}_{\eta_{j}}$-algebra
of dimension $j$.

\item The maximal ideals of $A_{j}$ are precisely the primes minimal over
$\eta_{j}A_{j}$.

\item $A_{j}$ is a finite product of reduced $j$-dimensional local rings, each
complete with respect to its maximal ideal.
\end{enumerate}

Then the analogous statements for $j-1$ are true.
\end{lemma}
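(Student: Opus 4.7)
The strategy is to carry out the step $A_j \rightsquigarrow A_{j-1}$ of Definition \ref{def_AjRingsShorthand} in two phases: first localize at $S := \mathcal{O}_{\eta_j} \setminus \eta_{j-1}$, then take the $\eta_{j-1}$-adic completion. Exploiting hypothesis (3), decompose $A_j = \prod_\alpha B_\alpha$ as a finite product of complete local reduced $j$-dimensional rings; since localization and $I$-adic completion both commute with finite products, it suffices to replace $A_j$ by a single factor $B := B_\alpha$. Noting $\mathcal{O}_{\eta_j}[S^{-1}] = \mathcal{O}_{\eta_{j-1}}$, the first phase yields $B[S^{-1}] = B \otimes_{\mathcal{O}_{\eta_j}} \mathcal{O}_{\eta_{j-1}}$, Noetherian and flat over $\mathcal{O}_{\eta_{j-1}}$.

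The key prime-ideal analysis proceeds as follows. A prime $\mathfrak{q}$ of $B$ survives localization iff its contraction to $\mathcal{O}_{\eta_j}$ lies inside $\eta_{j-1}$. Excellence (hence universal catenarity) of $\mathcal{O}_{\eta_j}$ ensures no prime strictly between the height-$(j-1)$ prime $\eta_{j-1}$ and the height-$j$ maximal $\eta_j$; combined with hypothesis (2), this pins down the maximal ideals of $B[S^{-1}]$ as precisely the primes $\mathfrak{q}$ of $B$ contracting to $\eta_{j-1}$. Applying the flat-local dimension formula
\[
\dim B_\mathfrak{q} \;=\; \dim \mathcal{O}_{\eta_{j-1}} + \dim (B_\mathfrak{q}/\eta_{j-1}B_\mathfrak{q})
\]
and using $\dim B = j$, any such $\mathfrak{q}$ must in fact be minimal over $\eta_{j-1} B$: a positive-dimensional fiber component would force $\mathfrak{q}$ to have height $j$, hence be maximal in $B$, hence contract to $\eta_j$ by hypothesis (2) --- a contradiction. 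Noetherianness of $B/\eta_{j-1}B$ then gives only finitely many such $\mathfrak{q}$, so $B[S^{-1}]$ is Noetherian semi-local of dimension $j-1$.

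Since every maximal ideal of $B[S^{-1}]$ contains $\eta_{j-1}$, and conversely every prime containing $\eta_{j-1}$ is maximal (by the argument above), the Jacobson radical satisfies $\sqrt{\eta_{j-1} B[S^{-1}]} = J$; Noetherianness yields $J^n \subseteq \eta_{j-1} B[S^{-1}]$ for some $n$, so the $\eta_{j-1}$-adic and $J$-adic topologies coincide. The $J$-adic completion of a Noetherian semi-local ring decomposes as a finite product $\prod_\mathfrak{m} \widehat{B[S^{-1}]_\mathfrak{m}}$ of complete local Noetherian rings, each of dimension $j-1$ (completion preserves local Krull dimension). Faithful flatness of $A_{j-1}$ over $\mathcal{O}_{\eta_{j-1}}$ follows by composing the faithful flatness of Jacobson-adic completion of a Noetherian semi-local ring with the base-change faithful flatness inherited from hypothesis (1); the maximal-ideal description (hypothesis 2 for $A_{j-1}$) is immediate from the construction.

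The main obstacle is preserving \emph{reducedness} under the final completion: in general, the completion of a reduced Noetherian local ring need not be reduced. This is precisely where the \emph{excellence} hypothesis on $X$ is essential. Excellence is inherited through localizations and ideal-adic completions of local rings, so each $B_\alpha$ --- and in turn each localization $B_\alpha[S^{-1}]_\mathfrak{m}$ --- is excellent. A defining feature of excellent rings is that completion preserves reducedness (equivalently, formal fibers are geometrically reduced), which delivers hypothesis (3) for $A_{j-1}$ and closes the induction.
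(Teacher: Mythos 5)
Your overall architecture --- localize at $S=\mathcal{O}_{\eta_j}\setminus\eta_{j-1}$, analyse the fibre over $\eta_{j-1}$ using flatness, then complete and decompose --- is the same as the paper's, and your use of the flat local dimension formula to show the fibre over $\eta_{j-1}$ is zero-dimensional is a clean substitute for the paper's prime-correspondence argument in its Step 2. However, two of your justifications do not hold up as written. First, you claim the maximal ideals of $B[S^{-1}]$ are \emph{precisely} the primes contracting onto $\eta_{j-1}$, and you need this to identify the Jacobson radical of $B[S^{-1}]$ with $\sqrt{\eta_{j-1}B[S^{-1}]}$ so that the $J$-adic and $\eta_{j-1}$-adic completions agree. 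What your argument actually gives is one direction: primes contracting onto $\eta_{j-1}$ are maximal among surviving primes (zero-dimensional fibre). The converse --- that a prime maximal among those disjoint from $S$ cannot contract to a prime strictly smaller than $\eta_{j-1}$ --- is a going-up-type statement, and neither flatness (which gives going-down, not going-up) nor the absence of primes between $\eta_{j-1}$ and $\eta_j$ supplies it. The paper sidesteps this by completing \emph{first}: in the $\eta_{j-1}$-adic completion, $\eta_{j-1}A_{j-1}$ lies in the Jacobson radical automatically (Lemma \ref{comalg_lemma_reminder}.\ref{comalg_fact_D3}), and one then lifts idempotents from the Artinian quotient. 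Your conclusion is still reachable without the converse, since $B[S^{-1}]/\eta_{j-1}^{m}$ is Artinian and the Chinese Remainder Theorem gives $\widehat{B[S^{-1}]}_{\eta_{j-1}}\cong\prod_{\mathfrak q}\widehat{B[S^{-1}]_{\mathfrak q}}$ over the primes $\mathfrak q$ minimal over $\eta_{j-1}$; but the route through ``$J$-adic equals $\eta_{j-1}$-adic'' should be replaced.

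Second, your reducedness argument rests on the assertion that ``excellence is inherited through ideal-adic completions of local rings.'' That is not a routine inheritance property: whether $I$-adic completion preserves (quasi-)excellence was a long-standing open problem, settled only by difficult work of Rotthaus and Gabber, and it cannot be cited as folklore. Fortunately you do not need it here: hypothesis (3) already tells you that each factor $B_\alpha$ is a \emph{complete} Noetherian local ring, and complete Noetherian local rings are excellent for elementary reasons; excellence then passes to the localization $B_\alpha[S^{-1}]_{\mathfrak m}$, which is reduced, and the maximal-adic completion of a reduced excellent local ring is reduced. (The paper takes yet another route: it unwinds the whole tower back to $\widehat{\mathcal{O}_{\eta_r}}$ and applies the regularity of the completion map for excellent rings, Lemma \ref{comalg_lemma_reminder}.\ref{comalg_fact_D16}, together with faithfully flat ascent of reducedness, Lemma \ref{comalg_lemma_reminder}.\ref{comalg_fact_D17}.) With these two repairs your proof is correct and essentially parallel to the paper's.
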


(We apologize to the reader for this slightly redundant formulation, but we
also intend the numbering as a guide along the steps in the proof.)

Beginning with $j:=r$ we had set $A_{r}:=\widehat{\mathcal{O}_{\eta_{r}}}$. It
is clear that all properties are satisfied since $\dim\widehat{\mathcal{O}%
_{\eta_{r}}}=\dim\mathcal{O}_{\eta_{r}}=\operatorname*{codim}_{X}\eta_{r}=r$.

\begin{proof}
\textit{(Step 1)} By construction $A_{j-1}$ is an $\eta_{j-1}A_{j-1}$-adically
complete Noetherian ring. $A_{j}$ is an $\mathcal{O}_{\eta_{j}}$-algebra
(property 1 for $A_{j}$), so by the universal property of localization we have%
\[%
\begin{array}
[c]{ccc}%
A_{j} & \longrightarrow & A_{j}[(\mathcal{O}_{\eta_{j}}-\eta_{j-1})^{-1}]\\
\uparrow &  & \uparrow\\
\mathcal{O}_{\eta_{j}} & \longrightarrow & (\mathcal{O}_{\eta_{j}}%
)_{\eta_{j-1}}\text{,}%
\end{array}
\]
but $(\mathcal{O}_{\eta_{j}})_{\eta_{j-1}}=\mathcal{O}_{\eta_{j-1}}$. So
$A_{j}[(\mathcal{O}_{\eta_{j}}-\eta_{j-1})^{-1}]$ and its $\eta_{j-1}$-adic
completion are $\mathcal{O}_{\eta_{j-1}}$-algebras. \textit{(Step 2: Maximal
ideals under localization)} Next, we determine the maximal ideals
$\mathfrak{m}_{i}$ of $A_{j-1}$: By Lemma \ref{comalg_lemma_reminder}%
.\ref{comalg_fact_D3}%
\[
\eta_{j-1}A_{j-1}\subseteq\operatorname*{rad}A_{j-1}:=%
{\textstyle\bigcap}
\mathfrak{m}_{i}\text{,}%
\]
i.e. they are in bijective correspondence with the maximal ideals of
$A_{j-1}/\eta_{j-1}A_{j-1}\cong A_{j}[(\mathcal{O}_{\eta_{j}}-\eta_{j-1}%
)^{-1}]/\eta_{j-1}$. The primes of the localization $A_{j}[(\mathcal{O}%
_{\eta_{j}}-\eta_{j-1})^{-1}]$ correspond bijectively to those primes
$P\subset A_{j}$ such that $P\cap(\mathcal{O}_{\eta_{j}}-\eta_{j-1}%
)=\varnothing$. By induction (properties 1 \& 2 for $A_{j}$) we know that the
maximal ideals in $A_{j}$ are the (finitely many) primes which are minimal
over $\eta_{j}A_{j}$. Moreover, $A_{j}$ is faithfully flat over $\mathcal{O}%
_{\eta_{j}}$, so by Lemma \ref{comalg_lemma_reminder}.\ref{comalg_fact_D11}
the primes $P$ minimal over $\eta_{j}A_{j}$ are those minimal with the
property $P\cap\mathcal{O}_{\eta_{j}}=\eta_{j}$. Hence, for them
$P\cap(\mathcal{O}_{\eta_{j}}-\eta_{j-1})=\eta_{j}-\eta_{j-1}\neq\varnothing$;
they all disappear in the localization. Thus, the maximal ideals of
$A_{j}[(\mathcal{O}_{\eta_{j}}-\eta_{j-1})^{-1}]$ correspond to primes in
$A_{j}$ having at least coheight $1$. This enforces that $A_{j}[(\mathcal{O}%
_{\eta_{j}}-\eta_{j-1})^{-1}]/\eta_{j-1}$ is zero-dimensional. Hence, the
maximal ideals $P$ of%
\begin{equation}
A_{j}[(\mathcal{O}_{\eta_{j}}-\eta_{j-1})^{-1}]/\eta_{j-1}\cong A_{j-1}%
/\eta_{j-1}A_{j-1}\label{lCOA_D_2}%
\end{equation}
are exactly the minimal primes of it, i.e. they are primes minimal over
$\eta_{j-1}A_{j-1}$ in $A_{j-1}$ (proving property 2 for $A_{j-1}$).
\textit{(Step 3: Faithful flatness)} $A_{j-1}$ is clearly flat over
$\mathcal{O}_{\eta_{j-1}}$ since it arises from repeated localization and
completion from $\mathcal{O}_{\eta_{j-1}}$ and both operations are flat.
Moreover, again by faithful flatness of $A_{j}$ over $\mathcal{O}_{\eta_{j}}$,
$\eta_{j-1}A_{j}\cap\mathcal{O}_{\eta_{j}}=\eta_{j-1}$, hence $\eta_{j-1}%
A_{j}\cap(\mathcal{O}_{\eta_{j}}-\eta_{j-1})=\eta_{j-1}-\eta_{j-1}%
=\varnothing$; so the ring in Equation \ref{lCOA_D_2} is not the zero ring. By
Lemma \ref{comalg_lemma_reminder}.\ref{comalg_fact_D6} this shows that
$A_{j-1}$ is even a faithfully flat $\mathcal{O}_{\eta_{j-1}}$-algebra
(proving property 1 for $A_{j-1}$). \textit{(Step 4: Reducedness)} Next, we
claim that $A_{j-1}$ is reduced. Both localization and completion (with
respect to arbitrary ideals) are regular morphisms by Lemma
\ref{comalg_lemma_reminder}.\ref{comalg_fact_D16}. Thus, the composition is
regular. It is also faithfully flat, so by faithfully flat ascent, Lemma
\ref{comalg_lemma_reminder}.\ref{comalg_fact_D17}, $A_{j-1}$ is reduced. In
completely the same fashion, $A_{j-1}/\eta_{j-1}A_{j-1}$ arises from iterated
localizations and completions from $\widehat{\mathcal{O}_{\eta_{r}}/\eta
_{j-1}}$. As $\eta_{j-1}$ is prime, $\mathcal{O}_{\eta_{r}}/\eta_{j-1}$ is a
domain and thus $\widehat{\mathcal{O}_{\eta_{r}}/\eta_{j-1}}$ is at least
reduced. Hence, the same argument implies that $A_{j-1}/\eta_{j-1}A_{j-1}$ is
reduced. Since we know now that $A_{j-1}/\eta_{j-1}A_{j-1}$ is reduced and
zero-dimensional, Lemma \ref{comalg_lemma_reminder}.\ref{comalg_fact_D4}
implies that we have%
\begin{equation}
A_{j-1}/\eta_{j-1}A_{j-1}\cong%
{\textstyle\prod\nolimits_{\mathfrak{m}}}
[A_{j}[(\mathcal{O}_{\eta_{j}}-\eta_{j-1})^{-1}]/\eta_{j-1}]_{\mathfrak{m}%
}\text{,}\label{TATEMATRIX_l14}%
\end{equation}
where $\mathfrak{m}$ runs through the finitely many (automatically minimal)
primes in $A_{j-1}/\eta_{j-1}A_{j-1}$. The localizations of the right-hand
side are reduced zero-dimensional local rings, i.e. by Lemma
\ref{comalg_lemma_reminder}.\ref{comalg_fact_D7} they must be fields. We
obtain a complete system of pairwise orthogonal idempotents $\overline{e_{1}%
},\ldots,\overline{e_{\ell}}\in A_{j-1}/\eta_{j-1}A_{j-1}$ giving the
decomposition of Equation \ref{TATEMATRIX_l14}. Using Lemma
\ref{comalg_lemma_reminder}.\ref{comalg_fact_D8} these idempotents lift
uniquely to a complete system of pairwise orthogonal idempotents $e_{1}%
,\ldots,e_{\ell}$ in $A_{j-1}$. Hence,%
\[
A_{j-1}\cong%
{\textstyle\prod\nolimits_{\mathfrak{m}}}
e_{i}A_{j-1}\text{.}%
\]
Hence, $A_{j-1}$ is a finite product of reduced $(j-1)$-dimensional local
rings (proving property 3 for $A_{j-1}$).
\end{proof}

After this preparation we are ready to establish the rest of Theorem
\ref{TATE_StructureOfLocalAdelesProp}.

\begin{proof}
[Proof of Thm. \ref{TATE_StructureOfLocalAdelesProp}]Recall that
$A_{X}(\triangle,\mathcal{O}_{X})=A_{0}$. From Lemma
\ref{TATE_StructureOfLocalAdelesProp_KeyLemma}, property 3, for $A_{0}$ it
follows that $A_{X}(\triangle,\mathcal{O}_{X})$ is a finite product of fields.
We may unwind $A_{X}(\triangle^{\prime},\mathcal{O}_{X})$ entirely analogously
as in Equation \ref{TATEMATRIX_l29} and obtain $A_{X}(\triangle^{\prime
},\mathcal{O}_{X})=A_{1}$ and thus (by the very definition of $A_{0} $,
Equation \ref{TATEMATRIX_l17})%
\begin{align*}
A_{X}(\triangle,\mathcal{O}_{X})=A_{0}  & =\underleftarrow{\lim}_{i_{0}}%
A_{1}[(\mathcal{O}_{\eta_{1}}-\eta_{0})^{-1}]/\eta_{0}^{i_{0}}\\
& =\underleftarrow{\lim}_{i_{0}}A_{X}(\triangle^{\prime},\mathcal{O}%
_{X})[(\mathcal{O}_{\eta_{1}}-\eta_{0})^{-1}]/\eta_{0}^{i_{0}}\text{.}%
\end{align*}
By Lemma \ref{TATE_StructureOfLocalAdelesProp_KeyLemma} the ring $A_{1}$ is a
finite product of one-dimensional reduced complete local rings. Denote by
$Q_{i}$ the minimal primes of $A_{1}$. Being reduced, the first arrow in%
\begin{align*}
A_{X}(\triangle^{\prime},\mathcal{O}_{X})=A_{1}  & \hookrightarrow%
{\textstyle\prod\nolimits_{i}}
A_{1}/Q_{i}\\
& \hookrightarrow%
{\textstyle\prod\nolimits_{i}}
A_{1}/Q_{i}[(\mathcal{O}_{\eta_{1}}-\eta_{0})^{-1}]\\
& \hookrightarrow\underleftarrow{\lim}_{i_{0}}%
{\textstyle\prod\nolimits_{i}}
A_{1}/Q_{i}[(\mathcal{O}_{\eta_{1}}-\eta_{0})^{-1}]/\eta_{0}^{i_{0}}=%
{\textstyle\prod\nolimits_{i}}
A_{X}(\triangle,\mathcal{O}_{X})/Q_{i}%
\end{align*}
is injective. The injectivity of the third follows from being Noetherian.
Consider the normalization of $A_{X}(\triangle^{\prime},\mathcal{O}_{X})$ in
$A_{X}(\triangle,\mathcal{O}_{X})$. By Lemma \ref{comalg_lemma_reminder}%
.\ref{comalg_fact_D9} the normalization arises as the product of the integral
closures $N_{i}$ of each $A_{X}(\triangle^{\prime},\mathcal{O}_{X})/Q_{i}$ in
the respective field of fractions $A_{X}(\triangle,\mathcal{O}_{X})/Q_{i} $.
Each of these is a finite extension since complete local rings are always
excellent; in particular, the entire normalization is a finite ring extension.
Moreover, $A_{X}(\triangle^{\prime},\mathcal{O}_{X})/Q_{i}$ is complete local
and has a unique minimal prime, so by Lemma \ref{comalg_lemma_reminder}%
.\ref{comalg_fact_D18} there is also just a single maximal ideal in its
normalization $N_{i}$, i.e. $N_{i}$ is local, too. We obtain
\[
A_{X}(\triangle^{\prime},\mathcal{O}_{X})\hookrightarrow%
{\textstyle\prod\nolimits_{i}}
A_{1}/Q_{i}\hookrightarrow%
{\textstyle\prod\nolimits_{i}}
N_{i}\hookrightarrow%
{\textstyle\prod\nolimits_{i}}
A_{X}(\triangle,\mathcal{O}_{X})/Q_{i}=A_{X}(\triangle,\mathcal{O}%
_{X})\text{.}%
\]
Each $N_{i}$ is a one-dimensional \textit{normal} complete local ring. Such a
local ring is a discrete valuation ring by Lemma \ref{comalg_lemma_reminder}%
.\ref{comalg_fact_D14}. Hence, $A_{X}(\triangle,\mathcal{O}_{X})$ is a finite
product of complete discrete valuation fields, $N_{i}$ are their respective
rings of integers. Under the normalization each local ring of $A_{X}%
(\triangle^{\prime},\mathcal{O}_{X})$ gets extended to a semi-local ring,
leading to a branching into some $g\geq1$ maximal ideals over it, and thus to
a branching like (for example)%
\[%
\raisebox{0.0061in}{\parbox[b]{1.2296in}{\begin{center}
\includegraphics[
height=0.6113in,
width=1.2296in
]%
{finringsplitting.eps}%
\\
{}%
\end{center}}}
\]
once we look at all local rings together: dots in the upper row represent
maximal ideals of the normalizations, i.e. factors $N_{i}$. Dots in the lower
row represent maximal ideals of $A_{X}(\triangle^{\prime},\mathcal{O}_{X})$,
so by Lemma \ref{TATE_StructureOfLocalAdelesProp} equivalently minimal primes
of $A_{X}(\triangle^{\prime},\mathcal{O}_{X})/\eta_{1}$. The respective
residue fields $\kappa_{i}:=N_{i}/\mathfrak{m}_{i}$ also follow to be finite
ring extensions of $(A_{X}(\triangle^{\prime},\mathcal{O}_{X})/Q_{i})/\eta
_{1}$. By direct inspection one sees that $A_{X}(\triangle^{\prime
},\mathcal{O}_{X})/\eta_{1}$ can be identified with $A_{\overline{\{\eta
_{1}\}}}(\triangle^{\prime},\mathcal{O}_{\overline{\{\eta_{1}\}}})$, i.e.
identified with $A(\triangle^{\prime},\mathcal{O}_{X})$, but taking
$X:=\overline{\{\eta_{1}\}}$ as the scheme and reading $\triangle^{\prime}$ as
an element of $S(\overline{\{\eta_{1}\}})_{r-1}$ instead of $S(X)_{r-1}$.
Therefore, by induction on the dimension of $X$, in the Figure above the lower
row dots equivalently correspond canonically to the factors $k_{j}$; and the
upper row dots to the $\kappa_{i}$. Moreover, again by induction, the ring
$A_{X}(\triangle^{\prime},\mathcal{O}_{X})/\eta_{1}$ is a finite product of
$(r-1)$-local fields in a canonical fashion, and the $\kappa_{i}$ finite field
extensions thereof. Going all the way down, by induction on $r$, this shows
that the last residue fields are finite extensions of%
\[
A_{\overline{\{\eta_{r}\}}}(\{\eta_{r}\},\mathcal{O}_{\overline{\{\eta_{r}\}}%
})=\underleftarrow{\lim}_{i}\mathcal{O}_{\overline{\{\eta_{r}\}},\eta_{r}%
}/\eta_{r}^{i}=\kappa(\eta_{r})\text{.}%
\]
directly from the definition of the ad\`{e}les, Equation \ref{TATEMATRIX_l6}.
This establishes part (2) of the claim.\newline Each $\kappa_{i}$ is (a finite
extension of $-$ and thus itself) a complete discrete valuation field whose
residue field is $(r-1)$-local. Thus, each $F_{i}$ is an $r$-local field. This
establishes part (1) of the theorem. Finally, if all the fields in this
induction are $k$-algebras, each complete discrete valuation ring $R_{i}$ is
equicharacteristic, so by Cohen's Structure Theorem, Prop.
\ref{Prop_CohenStructureTheorem}, there is a non-canonical isomorphism
$\simeq\kappa_{i}[[t]]$. Hence, $F_{i}\simeq\kappa_{i}((t))$ and inductively
this shows that $r$-local fields are multiple Laurent series fields, proving
part (3) of the theorem. If $k$ is perfect, pick each coefficient field such
that it is additionally a sub-$k$-algebra. Part (4) is just the sheaf version
of Lemma \ref{comalg_lemma_reminder}.\ref{comalg_fact_D1}.
\end{proof}

We can easily extract the higher local field structure of the local ad\`{e}les
from the previous result. Recall that we write $A_{Z}(-,\mathcal{-})$ to
denote ad\`{e}les of a scheme $Z$.

\begin{theorem}
[Structure Theorem II]\label{COA_StructureTheorem2}Suppose $X$ is a purely
$n$-dimensional reduced Noetherian excellent scheme and $\triangle=\{(\eta
_{0}>\cdots>\eta_{r})\} $ a saturated flag. Then we get a diagram%
\begin{equation}%
\bfig\node x(0,1200)[{A_{\overline{\{\eta_{0}\}}}(\triangle,\mathcal{O}_{X})}]
\node y(0,800)[{A_{\overline{\{\eta_{0}\}}}(\triangle^{\prime},\mathcal{O}%
_{X})}]
\node z(1000,800)[{A_{\overline{\{\eta_{1}\}}}(\triangle^{\prime},\mathcal
{O}_{X})}]
\node w(1000,400)[{A_{\overline{\{\eta_{1}\}}}(\triangle^{\prime\prime
},\mathcal{O}_{X})}]
\node u(2000,400)[{A_{\overline{\{\eta_{2}\}}}(\triangle^{\prime\prime
},\mathcal{O}_{X})}]
\node v(2000,0)[\vdots]
\arrow/{^{(}->}/[y`x;]
\arrow/{->>}/[y`z;]
\arrow/{^{(}->}/[w`z;]
\arrow/{->>}/[w`u;]
\arrow[v`u;]
\efig
\label{lDiagStruct2}%
\end{equation}
where

\begin{enumerate}
\item the upward arrows are precisely the inclusions of Theorem
\ref{TATE_StructureOfLocalAdelesProp} (part
\ref{marker_adele_struct_one_partone}), Equation \ref{lCOA_D_3};

\item the rightward arrows are taking the quotient of $A_{\overline{\{\eta
_{i}\}}}(\triangle^{\prime\cdots\prime},\mathcal{O}_{X})$ by $\eta_{i+1} $;

\item After replacing each ring in Diagram \ref{lDiagStruct2}, except the
initial upper-left one, by a canonically defined finite ring extension, it
splits canonically as a direct product of staircase-shaped diagrams of rings:
Each factor has the shape%
\[%
\bfig\node x(0,1200)[{\kappa((t_{1}))\cdots((t_{n}))}]
\node y(0,900)[{\kappa((t_{1}))\cdots[[t_{n}]]}]
\node z(900,900)[{\kappa((t_{1}))\cdots((t_{n-1}))}]
\node w(900,600)[{\kappa((t_{1}))\cdots[[t_{n-1}]]}]
\node u(1800,600)[{\kappa((t_{1}))\cdots((t_{n-2}))}]
\node v(1800,300)[\vdots]
\arrow/{^{(}->}/[y`x;]
\arrow/{->>}/[y`z;]
\arrow/{^{(}->}/[w`z;]
\arrow/{->>}/[w`u;]
\arrow[v`u;]
\efig
\]
In particular, each object in it is a direct factor of a finite extension of
the corresponding entry in Diagram \ref{lDiagStruct2}.

\begin{enumerate}
\item The upward arrows are going to the field of fractions,

\item The rightward arrows correspond to passing to the residue field.
\end{enumerate}

\item These factors are indexed uniquely by the field factors of the
upper-left entry $A_{X}(\triangle,\mathcal{O}_{X})=%
{\textstyle\prod}
K_{i}$. Each field factor $k_{j}$ of $A_{\overline{\{\eta_{i}\}}}%
(\triangle^{\prime\cdots\prime},\mathcal{O}_{X})$ in any row of Diagram
\ref{lDiagStruct2} corresponds to $\geq1$ field factors in the row above, such
that the respective residue field is a finite field extension of the chosen
$k_{j}$.
\end{enumerate}
\end{theorem}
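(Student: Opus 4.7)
The plan is to deduce this from Theorem \ref{TATE_StructureOfLocalAdelesProp} by applying it row by row along the flag and then reorganizing the data along the factor decomposition of the top-left corner. The diagram itself has two families of arrows, and each family arises from a construction already established (or packaged) in the proof of that earlier theorem; only the global bookkeeping of the factors across the staircase is really new.

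First, for the upward arrows: the entry $A_{\overline{\{\eta_i\}}}(\triangle^{(i)\prime},\mathcal{O}_X)$ is, by the very definition of the adele construction (Equation \ref{TATEMATRIX_l6}), the local adele ring of the reduced closed subscheme $\overline{\{\eta_i\}}$ along the saturated flag $(\eta_i>\cdots>\eta_r)$ viewed inside that subscheme. Applying Theorem \ref{TATE_StructureOfLocalAdelesProp}(\ref{marker_adele_struct_one_partone}) inside $\overline{\{\eta_i\}}$ produces precisely the inclusion of Equation \ref{lCOA_D_3}, identifying the vertical arrow as the inclusion of the product of first rings of integers into the product of $(r-i)$-local fields; finiteness of the extension is again the normalization statement in that theorem.

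Second, for the rightward arrows, the key identification (already invoked in the final step of the proof of Theorem \ref{TATE_StructureOfLocalAdelesProp}) is the canonical isomorphism
\[
A_{\overline{\{\eta_i\}}}(\triangle^{(i)\prime},\mathcal{O}_X)/\eta_{i+1}\;\cong\;A_{\overline{\{\eta_{i+1}\}}}(\triangle^{(i+1)\prime},\mathcal{O}_{\overline{\{\eta_{i+1}\}}}).
\]
This is immediate from Equation \ref{TATEMATRIX_l6}: quotienting the innermost stalks $\mathcal{O}_{X,\eta_{i+1}}/\eta_{i+1}^j$ by the radical ideal $\eta_{i+1}$ returns the stalks of the reduced closed subscheme $\overline{\{\eta_{i+1}\}}$, and the outer limits and colimits in the adele construction are exact in the relevant sense (countable Mittag-Leffler for the pro-limits, filtered for the ind-limits). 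This identifies the rightward arrows as the quotients claimed in (2) and shows they are surjective.

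For (3) and (4) I propose an inductive unpacking. The top-left corner decomposes as $A_X(\triangle,\mathcal{O}_X)=\prod K_i$ by Theorem \ref{TATE_StructureOfLocalAdelesProp}(\ref{marker_adele_struct_one_partone}); this is the canonical indexing set. Passing to the normalization in the second row gives $\prod\mathcal{O}_i$ with $\mathcal{O}_i$ the first ring of integers of $K_i$, and the rightward quotient returns the first residue field of $K_i$. Iterating this observation all the way down to $\eta_r$ yields, for each $K_i$, a canonical tower of complete DVRs together with their residue fields. Applying Proposition \ref{prop_CohenStructureThmForEquicharNLocalFields} at every complete DVR in the tower realizes each such factor as the staircase of multiple Laurent series of the shape displayed in the statement, with the upward arrows being the passage to the fraction field and the rightward arrows the reduction to the residue field. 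The indexing assertion of (4) is precisely the branching picture of Diagram \ref{lDiag_Branching_1} from Theorem \ref{TATE_StructureOfLocalAdelesProp}(\ref{marker_adele_struct_one_parttwo}): each field factor $k_j$ of a row corresponds to $g\geq 1$ field factors in the row above, namely those obtained by normalizing the corresponding one-dimensional complete semi-local ring in the preceding row.

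The main technical obstacle will be to choose the product decompositions \emph{compatibly} across rows, so that the full staircase really splits as a product rather than just each individual row. Concretely: after normalization, each ring in the diagram carries a complete system of orthogonal idempotents, and one must verify that these idempotents map consistently under both the upward inclusions and the rightward surjections. The rightward compatibility is automatic from surjectivity together with the fact that the rightward maps send minimal primes to minimal primes; the upward compatibility uses the uniqueness of lifting a complete system of orthogonal idempotents along a complete (hence Henselian) extension, which was exactly the mechanism used in Step 4 of the proof of Lemma \ref{TATE_StructureOfLocalAdelesProp_KeyLemma}. Once this compatibility is pinned down, each top-row factor $K_i$ picks out a unique thread through the normalizations of all the lower rings, and the theorem drops out.
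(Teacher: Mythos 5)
Your proposal is correct and follows essentially the same route as the paper: the staircase is built by inductively applying Theorem \ref{TATE_StructureOfLocalAdelesProp} to the closed subschemes $\overline{\{\eta_i\}}$ with the truncated flags, with the rightward arrows identified via $A_{\overline{\{\eta_i\}}}(\cdot)/\eta_{i+1}\cong A_{\overline{\{\eta_{i+1}\}}}(\cdot)$ and the branching/splitting governed by part (\ref{marker_adele_struct_one_parttwo}) of that theorem. The paper's own proof of this statement is in fact terser than yours, deferring the compatible choice of idempotents across rows (your ``main technical obstacle'') to the elaboration and to the proof of Theorem \ref{Thm_COA_StructureTheorem3}, where it is resolved by exactly the normalization-plus-idempotent-lifting mechanism you describe.
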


An elaboration: As we already know, each $A_{\overline{\{\eta_{i}\}}%
}(\triangle^{\prime\cdots\prime},\mathcal{O}_{X})$ decomposes as a finite
direct product of fields. In particular, in Diagram \ref{lDiagStruct2} we get
such a decomposition in every single row (and of the two terms in each row, we
refer to the one following after \textquotedblleft$\twoheadrightarrow
$\textquotedblright), and there is a matching between the field factors of the
individual rows. For each field factor $k_{j}$ of a row, there are $\geq1$
field factors in the row above it, such that the respective residue field is
finite over the given $k_{j}$. If we follow the graphical representation of
this branching behaviour as in Diagram \ref{lDiag_Branching_1}, we get a
simple description of the entire branching behaviour from the top row all to
the bottom row: If we begin with the field factors of the upper-left entry
$A_{X}(\triangle,\mathcal{O}_{X})=%
{\textstyle\prod}
K_{i}$, the matching to the indexing of the field factors of $A_{\overline
{\{\eta_{i}\}}}(\triangle^{\prime\cdots\prime},\mathcal{O}_{X})$ in the rows
below is obtained by following the downward paths top-to-bottom in the tree
graph obtained by concatenating the branching diagrams (like Diagram
\ref{lDiag_Branching_1}) on each level, e.g. as in
\[%
{\includegraphics[
height=1.2531in,
width=1.5056in
]%
{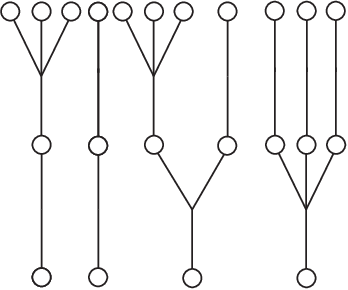}%
}
\]

\begin{proof}
The first step (both logically as well as visually in the diagram)%
\[%
\bfig\node x(0,1200)[{A_{\overline{\{\eta_{0}\}}}(\triangle,\mathcal{O}_{X})}]
\node y(0,800)[{A_{\overline{\{\eta_{0}\}}}(\triangle^{\prime},\mathcal{O}%
_{X})}]
\node z(1000,800)[{A_{\overline{\{\eta_{1}\}}}(\triangle^{\prime},\mathcal
{O}_{X})}]
\arrow/{^{(}->}/[y`x;]
\arrow/{->>}/[y`z;]
\efig
\]
is literally just Theorem \ref{TATE_StructureOfLocalAdelesProp} applied to the
scheme $X:=\overline{\{\eta_{0}\}}$ and the flag $\triangle$. To continue to
the next step, just inductively apply Theorem
\ref{TATE_StructureOfLocalAdelesProp} to $X:=\overline{\{\eta_{i}\}}$ instead
and note that the $i$-fold truncated flag of sub-schemes can be viewed as a
flag of sub-schemes in this smaller scheme as well.
\end{proof}

\begin{definition}
\label{def_ShorthandColimits}For a point (or ideal) $\eta$, we shall write%
\[
\underset{f\notin\eta}{\underrightarrow{\operatorname*{colim}}}\,\mathcal{O}%
\left\langle f^{-\infty}\right\rangle
\]
to denote the colimit over all coherent sub-sheaves (or finitely generated
sub-modules) of the localization $\mathcal{O}_{\eta}$.
\end{definition}

\begin{lemma}
\label{lemma_indproreformulation}Suppose $X$ is a purely $n$-dimensional
reduced Noetherian excellent scheme and $\triangle=\{(\eta_{0}>\cdots>\eta
_{r})\}$ a saturated flag. Suppose $\mathcal{F}$ is a coherent sheaf. Then the
following $\mathcal{O}_{\eta_{r}}$-modules are pairwise canonically isomorphic
for all $j=1,\ldots,r$:

\begin{enumerate}
\item $\mathcal{F}_{\triangle}\underset{\mathrm{def}}{=}A(\eta_{0}>\cdots
>\eta_{r},\mathcal{F})$.$\quad$(this intentionally does not depend on $j$)

\item $\underset{f_{0}\notin\eta_{0}}{\underrightarrow{\operatorname*{colim}}%
}\underset{i_{1}\geq1}{\underleftarrow{\lim}}\cdots\underset{f_{j-1}\notin
\eta_{j-1}}{\underrightarrow{\operatorname*{colim}}}\underset{i_{j}\geq
1}{\underleftarrow{\lim}}\,A\left(  \eta_{j+1}>\cdots>\eta_{r},\frac
{\mathcal{F}_{\eta_{j}}\otimes\mathcal{O}\left\langle f_{0}^{-\infty
}\right\rangle \otimes\cdots\otimes\mathcal{O}\left\langle f_{j-1}^{-\infty
}\right\rangle }{\eta_{1}^{i_{1}}+\cdots+\eta_{j}^{i_{j}}}\right)  $,\newline
where the denominator tacitly is to be understood as $(\eta_{1}^{i_{1}}%
+\cdots+\eta_{j}^{i_{j}})\cdot($numerator$)$.

\item $\underset{f_{0}\notin\eta_{0}}{\underrightarrow{\operatorname*{colim}}%
}\underset{i_{1}\geq1}{\underleftarrow{\lim}}\cdots\underset{f_{j-1}\notin
\eta_{j-1}}{\underrightarrow{\operatorname*{colim}}}\underset{i_{j}\geq
1}{\underleftarrow{\lim}}\underset{f_{j}\notin\eta_{j}}{\underrightarrow
{\operatorname*{colim}}}\,A\left(  \eta_{j+1}>\cdots>\eta_{r},\frac
{\mathcal{F}\otimes\mathcal{O}\left\langle f_{0}^{-\infty}\right\rangle
\otimes\cdots\otimes\mathcal{O}\left\langle f_{j}^{-\infty}\right\rangle
}{\eta_{1}^{i_{1}}+\cdots+\eta_{j}^{i_{j}}}\right)  $,\newline where the
denominator tacitly is to be understood as $(\eta_{1}^{i_{1}}+\cdots+\eta
_{j}^{i_{j}})\cdot($numerator$)$.

\item $\underset{L_{1}}{\underrightarrow{\operatorname*{colim}}}%
\underset{L_{1}^{\prime}}{\underleftarrow{\lim}}\cdots\underset{L_{j}%
}{\underrightarrow{\operatorname*{colim}}}\underset{L_{j}^{\prime}%
}{\underleftarrow{\lim}}\,A\left(  \eta_{j+1}>\cdots>\eta_{r},\frac{L_{j}%
}{L_{j}^{\prime}}\right)  $,\newline where for all $\ell=1,\ldots,j$ the
$L_{\ell}$ run through all finitely generated $\mathcal{O}_{\eta_{\ell}}%
$-submodules of%
\[
\frac{L_{\ell-1}}{L_{\ell-1}^{\prime}}\text{ (in case }\ell>1\text{)}%
\qquad\text{or}\qquad\mathcal{F}_{\eta_{0}}\text{ (in case }\ell=1\text{)}%
\]
in ascending order; and the $L_{\ell}^{\prime}\subseteq L_{\ell}$ run through
all full rank finitely generated $\mathcal{O}_{\eta_{\ell}}$-submodules of
$L_{\ell}$ in descending order.
\end{enumerate}
\end{lemma}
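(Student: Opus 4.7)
The plan is to prove the chain $(1) \Leftrightarrow (2) \Leftrightarrow (3) \Leftrightarrow (4)$ by formal rewrites at the level of the iterated $\lim$--$\operatorname*{colim}$ expressions, using only (a)~the unwinding of the definition of adèles in Equation~\ref{TATEMATRIX_l29}, (b)~commutativity of filtered colimits with tensor products, quotients, and the adèle functor on quasi-coherent sheaves, and (c)~cofinality of certain directed systems.

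For $(1) \Leftrightarrow (2)$, I would expand $\mathcal{F}_{\triangle}$ via Equation~\ref{TATEMATRIX_l29} and group the innermost $r-j$ pairs, indexed by $\eta_{j+1}, \ldots, \eta_{r}$. Reading the definition of $A(-,-)$ backwards, this block reassembles as $A(\eta_{j+1} > \cdots > \eta_{r}, M)$ applied to the $\mathcal{O}_{\eta_{j}}$-module $M$ built from the outer tensor factors and ideal quotients. The $\underrightarrow{\operatorname*{colim}}$ at $\eta_{j}$ is then absorbed into the stalk $\mathcal{F}_{\eta_{j}} = \underset{f_{j} \notin \eta_{j}}{\underrightarrow{\operatorname*{colim}}}\,\mathcal{F} \otimes \mathcal{O}\langle f_{j}^{-\infty}\rangle$, producing exactly the $j$ outer $\underrightarrow{\operatorname*{colim}}$--$\underleftarrow{\lim}$ pairs of (2) with $\mathcal{F}_{\eta_{j}}$ in the argument of the inner adèle. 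The equivalence $(2) \Leftrightarrow (3)$ is then immediate: one simply refrains from absorbing the colimit $\underset{f_{j} \notin \eta_{j}}{\underrightarrow{\operatorname*{colim}}}$ into the stalk notation and instead pulls it out, which is legitimate because filtered colimits commute with tensor products, with quotients, and with the functor $A(\eta_{j+1} > \cdots > \eta_{r}, -)$ on quasi-coherent sheaves via Equation~\ref{lTX1}.

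For $(3) \Leftrightarrow (4)$ the content is a pair of cofinality statements at each of the $j$ outer levels. First, the system of $\mathcal{O}\langle f_{\ell}^{-\infty}\rangle$ as $f_{\ell}$ ranges over the complement of $\eta_{\ell}$ is cofinal with the system of \emph{all} finitely generated $\mathcal{O}_{\eta_{\ell}}$-submodules $L_{\ell}$ of the localization: every finitely generated submodule is contained in some $f_{\ell}^{-N}$-scaled submodule obtained by clearing denominators of a finite generating set. Second, and dually, inside a fixed $L_{\ell}$ the system $\{\eta_{\ell}^{i_{\ell}} L_{\ell}\}_{i_{\ell} \geq 1}$ is cofinal with the system of all full-rank finitely generated $\mathcal{O}_{\eta_{\ell}}$-submodules $L_{\ell}' \subseteq L_{\ell}$: each $\eta_{\ell}^{i_{\ell}} L_{\ell}$ is itself such an $L_{\ell}'$, while Artin--Rees applied to $L_{\ell}$ over $\mathcal{O}_{\eta_{\ell}}$ forces any full-rank finitely generated submodule to contain some $\eta_{\ell}^{i_{\ell}} L_{\ell}$. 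These two cofinalities jointly allow us to replace each pair $\underset{f_{\ell}}{\underrightarrow{\operatorname*{colim}}}\,\underset{i_{\ell}}{\underleftarrow{\lim}}$ by $\underset{L_{\ell}}{\underrightarrow{\operatorname*{colim}}}\,\underset{L_{\ell}'}{\underleftarrow{\lim}}$ without altering the value.

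The main obstacle will be handling the $L_{\ell}'$-cofinality in $(3) \Leftrightarrow (4)$ uniformly as $\ell$ varies: full-rankness is defined relative to the ambient subquotient $L_{\ell-1}/L_{\ell-1}'$, which itself varies in the outer indexing, so the Artin--Rees estimate must be packaged to hold uniformly along the nested directed systems, and one has to check that the resulting double $\underrightarrow{\operatorname*{colim}}\,\underleftarrow{\lim}$ really computes the same object as the $f_{\ell}$-$i_{\ell}$ version regardless of the order in which one passes to cofinal sub-systems at the various levels. Once this local commutative-algebra input is in place, everything else is routine formal manipulation of commuting colimits and Mittag--Leffler limits on coherent modules, and the four descriptions agree.
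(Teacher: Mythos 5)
Your overall route is the paper's: reduce to $\mathcal{F}:=\mathcal{O}_{X}$, obtain (1)$\cong$(2) by regrouping the unravelled expression of Equation \ref{TATEMATRIX_l29} (the paper packages this as an induction on $j$, using (3) at level $j$ to recover (2) at level $j+1$, but the content is the same regrouping), get (2)$\cong$(3) from the definition of $A(-,-)$ on quasi-coherent sheaves as a colimit over coherent subsheaves, and get (3)$\cong$(4) from a two-sided cofinality of indexing systems. Your colimit-direction cofinality (clearing denominators of a finite generating set) is exactly the paper's ``lift generators from sub-quotients to rational functions''.

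The one genuine problem is your justification of the limit-direction cofinality. Artin--Rees does not show that a full-rank finitely generated submodule $L_{\ell}'\subseteq L_{\ell}$ contains some $\eta_{\ell}^{i}L_{\ell}$; it only compares the $\eta_{\ell}$-adic topology on $L_{\ell}'$ with the subspace topology. The assertion you need is equivalent to $L_{\ell}/L_{\ell}'$ having finite length over $\mathcal{O}_{\eta_{\ell}}$, and for a general finitely generated module over the $\ell$-dimensional local ring $\mathcal{O}_{\eta_{\ell}}$ with $\ell\geq2$ a full-rank submodule need not contain any power of the maximal ideal times the module (e.g.\ $(x)\subseteq k[[x,y]]$ is full rank but contains no $(x,y)^{i}$). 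What rescues the claim is precisely the nesting you flag as ``the main obstacle'': by induction on $\ell$, the ambient subquotient $L_{\ell-1}/L_{\ell-1}'$ is annihilated by $\eta_{1}^{i_{1}}+\cdots+\eta_{\ell-1}^{i_{\ell-1}}$, so $L_{\ell}$ is supported on the one-dimensional subscheme $V(\eta_{\ell-1})\subseteq\operatorname{Spec}\mathcal{O}_{\eta_{\ell}}$, whose only points are $\eta_{\ell-1}$ and $\eta_{\ell}$; full-rankness then forces $\operatorname{Supp}(L_{\ell}/L_{\ell}')\subseteq\{\eta_{\ell}\}$, hence finite length, hence annihilation by $\eta_{\ell}^{i_{\ell}}$ for some $i_{\ell}$. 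This is a support/dimension argument, not an Artin--Rees estimate, and it is the step that makes the descending systems $\{\eta_{\ell}^{i_{\ell}}L_{\ell}\}$ (equivalently, the spans of $\eta_{1}^{i_{1}}+\cdots+\eta_{\ell}^{i_{\ell}}$ used in the paper) and $\{L_{\ell}'\}$ cofinal at every level. With that substitution your argument closes up and coincides with the paper's.
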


Statement (1) intentionally does not depend on the choice of $j$. We merely
use the numbering of the above statement as a guideline through the steps of
the proof. Overall, we are just collecting a large number of different ways to
express the same object.

\begin{proof}
First of all, recall that%
\[
A(\eta_{0}>\cdots>\eta_{r},\mathcal{F})=A(\eta_{0}>\cdots>\eta_{r}%
,\mathcal{O}_{X})\otimes_{\mathcal{O}_{X}}\mathcal{F}\text{,}%
\]
and we see that it suffices to prove the claim for $\mathcal{F}:=\mathcal{O}%
_{X}$. The isomorphy of the objects in (2) and (3) is clear from the
definition since $\mathcal{F}_{\eta_{j}}$ will generally only be a
quasi-coherent sheaf, see Equation \ref{lTX1}. Next, we demonstrate the
isomorphism between (2) and (4) for any fixed $j$: Suppose we are given
$\ell\geq1$. Define for any $\mathcal{O}\left\langle f_{\ell-1}^{-\infty
}\right\rangle $ in the $\ell$-th colimit and $i_{\ell}\geq1$ in the $\ell$-th
limit%
\begin{align}
&  L_{\ell}:=\mathcal{O}_{\eta_{\ell}}\text{-span of }\mathcal{O}\left\langle
f_{0}^{-\infty},\ldots,f_{\ell-1}^{-\infty}\right\rangle \text{\quad}%
\subseteq\text{ }\frac{L_{\ell-1}}{L_{\ell-1}^{\prime}}\text{ (if }%
\ell>1\text{) or }\mathcal{O}_{\eta_{0}}\text{ (if }\ell=1\text{)}%
\label{lwla1}\\
&  L_{\ell}^{\prime}:=\mathcal{O}_{\eta_{\ell}}\text{-span of }\eta_{1}%
^{i_{1}}+\cdots+\eta_{\ell}^{i_{\ell}}\text{\quad}\subseteq\text{ }%
\frac{L_{\ell-1}}{L_{\ell-1}^{\prime}}\text{ (if }\ell>1\text{) or
}\mathcal{O}_{\eta_{0}}\text{ (if }\ell=1\text{).}\nonumber
\end{align}
As $\mathcal{O}\left\langle f_{\ell-1}^{-\infty}\right\rangle $ is a coherent
sheaf by construction, cf. Definition \ref{def_ShorthandColimits}, $L_{\ell}$
is a finitely generated $\mathcal{O}_{\eta_{\ell}}$-module. The same is true
for $L_{\ell}^{\prime}$ and we clearly have $L_{\ell}^{\prime}\subseteq
L_{\ell}$. This shows that there is a morphism between the indexing sets of
the limits/colimits in (2) to the indexing sets of the $L_{\ell},L_{\ell
}^{\prime}$ in (4). Moreover, we unravel by induction%
\begin{align*}
&  \frac{L_{\ell}}{L_{\ell}^{\prime}}=\frac{\mathcal{O}_{\eta_{\ell}}%
\cdot\mathcal{O}\left\langle f_{0}^{-\infty}\right\rangle \otimes\cdots
\otimes\mathcal{O}\left\langle f_{\ell-1}^{-\infty}\right\rangle }{\eta
_{1}^{i_{1}}+\cdots+\eta_{\ell}^{i_{\ell}}}\\
&  \qquad\text{(a quotient of sub-spaces of }\frac{L_{\ell-1}}{L_{\ell
-1}^{\prime}}\text{ for }\ell>1\text{, or }\mathcal{O}_{\eta_{0}}\text{ if
}\ell=1\text{).}%
\end{align*}
We see that $A\left(  \eta_{j+1}>\cdots>\eta_{r},L_{\ell}/L_{\ell}^{\prime
}\right)  $ agrees with the $A(-,-)$ appearing in formulation (2). Summarized,
the ind-pro limits of (2) define a sub-system of the ind-pro limits in (4),
running over the same objects as in (2). Next, note that for all finitely
generated $\mathcal{O}_{\eta_{\ell}}$-submodules of $\frac{L_{\ell-1}}%
{L_{\ell-1}^{\prime}}$ or $L_{\ell}$ we can lift generators from sub-quotients
to rational functions, allowing us to form a co-final system within the
ind-pro limits of (2). This implies that (4) is canonically isomorphic to (2).
Now, prove the full claim by induction on $j$: We verify (1)$\cong$(2) in the
special case $j=1$ by hand. Now assume (3) for any given $j$. Then by
unwinding the definition of $A(\eta_{j+1}>\cdots>\eta_{r},-)$ we literally
obtain (2) for $j+1$. Since we already have proven (3)$\cong$(2) for all $j$,
this sets up the entire induction along $j $, establishing our claim.
\end{proof}

This result has a particularly nice consequence for flags of the maximal
possible length:

\begin{corollary}
\label{cor_Obvious}Suppose $X$ is a purely $n$-dimensional reduced Noetherian
excellent scheme and $\triangle=\{(\eta_{0}>\cdots>\eta_{n})\}$ a saturated
flag. Suppose $\mathcal{F}$ is a coherent sheaf. Then%
\[
A(\eta_{0}>\cdots>\eta_{n},\mathcal{F})=\underset{L_{1}}{\underrightarrow
{\operatorname*{colim}}}\underset{L_{1}^{\prime}}{\underleftarrow{\lim}}%
\cdots\underset{L_{n}}{\underrightarrow{\operatorname*{colim}}}\underset
{L_{n}^{\prime}}{\underleftarrow{\lim}}{}\frac{L_{n}}{L_{n}^{\prime}}\text{,}%
\]
where for all $\ell=1,\ldots,n$, the $L_{\ell}$ run through all Beilinson
lattices (for the flag $\eta_{\ell-1}>\cdots>\eta_{n}$) in%
\[
\frac{L_{\ell-1}}{L_{\ell-1}^{\prime}}\text{ (in case }\ell>1\text{)}%
\qquad\text{or}\qquad\mathcal{F}_{\eta_{0}}\text{ (in case }\ell=1\text{)}%
\]
in ascending order; and the $L_{\ell}^{\prime}\subseteq L_{\ell}$ run through
all contained Beilinson lattices in descending order.
\end{corollary}

\begin{proof}
Just apply Lemma \ref{lemma_indproreformulation} in the special case $r=n$.
\end{proof}

In the formulation of the following lemma we shall employ the notation
$\widehat{(-)}$, which refers to omission here and not to completion or the like.

\begin{lemma}
\label{lemma_indproreform2}Suppose $X$ is a purely $n$-dimensional reduced
scheme of finite type over a field $k$ and $\triangle=\{(\eta_{0}>\cdots
>\eta_{n})\}$ a saturated flag.

\begin{enumerate}
\item Assume we are given finitely generated $\mathcal{O}_{\eta_{0}}$-modules
$M_{1},M_{2}$. Then a $k$-vector space morphism%
\[
f\in\operatorname*{Hom}\nolimits_{k}(M_{1\triangle},M_{2\triangle})
\]
is an element of $\operatorname*{Hom}\nolimits_{\triangle}(M_{1},M_{2})$ if
and only if

\begin{enumerate}
\item one can provide a final and co-final collection of Beilinson lattices
$L_{\ell}^{\prime}\subseteq L_{\ell}$ of $M_{1}$, and $N_{\ell}\subseteq
N_{\ell}^{\prime}$ of $M_{2}$ (in either case for $\ell=1,\ldots,n$) as in
Corollary \ref{cor_Obvious}, such that

\item there exists a compatible system of $k$-vector space morphisms%
\[
\frac{L_{n}}{L_{n}^{\prime}}\rightarrow\frac{N_{n}}{N_{n}^{\prime}}%
\]
inducing the map $f$ in the iterated Ind- and Pro-diagrams%
\begin{align*}
f:M_{1\triangle} &  \rightarrow M_{2\triangle}\\
\underset{L_{1}}{\underrightarrow{\operatorname*{colim}}}\underset
{L_{1}^{\prime}}{\underleftarrow{\lim}}\cdots\underset{L_{n}}{\underrightarrow
{\operatorname*{colim}}}\underset{L_{n}^{\prime}}{\underleftarrow{\lim}}%
\frac{L_{n}}{L_{n}^{\prime}} &  \rightarrow\underset{N_{1}}{\underrightarrow
{\operatorname*{colim}}}\underset{N_{1}^{\prime}}{\underleftarrow{\lim}}%
\cdots\underset{N_{n}}{\underrightarrow{\operatorname*{colim}}}\underset
{N_{n}^{\prime}}{\underleftarrow{\lim}}\frac{N_{n}}{N_{n}^{\prime}}\text{.}%
\end{align*}

\end{enumerate}

\item Suppose $f\in\operatorname*{Hom}\nolimits_{\triangle}(M_{1},M_{2})$.
Then $f\in I_{i\triangle}^{+}(M_{1},M_{2})$ if and only if $f$ admits a
factorization of the shape%
\[
\underset{L_{1}}{\underrightarrow{\operatorname*{colim}}}\underset
{L_{1}^{\prime}}{\underleftarrow{\lim}}\cdots\underset{L_{n}}{\underrightarrow
{\operatorname*{colim}}}\underset{L_{n}^{\prime}}{\underleftarrow{\lim}}%
\frac{L_{n}}{L_{n}^{\prime}}\rightarrow\underset{N_{1}}{\underrightarrow
{\operatorname*{colim}}}\underset{N_{1}^{\prime}}{\underleftarrow{\lim}}%
\cdots\underset{N_{i}}{\underrightarrow{\widehat{\operatorname*{colim}}}%
}\cdots\underset{N_{n}}{\underrightarrow{\operatorname*{colim}}}%
\underset{N_{n}^{\prime}}{\underleftarrow{\lim}}\frac{N_{n}}{N_{n}^{\prime}%
}\text{,}%
\]
i.e. instead of a colimit running over all $N_{i}$, it factors through a fixed
$N_{i}$ (depending only on $N_{1}$, $N_{1}^{\prime}$, $\ldots$, $N_{i-1}$,
$N_{i-1}^{\prime}$).

\item Similarly, $f\in I_{i\triangle}^{-}(M_{1},M_{2})$ holds if and only if
$f$ admits a factorization of the shape%
\[
\underset{L_{1}}{\underrightarrow{\operatorname*{colim}}}\underset
{L_{1}^{\prime}}{\underleftarrow{\lim}}\cdots\underset{L_{i}}{\underleftarrow
{\widehat{\lim}}}\cdots\underset{L_{n}}{\underrightarrow{\operatorname*{colim}%
}}\underset{L_{n}^{\prime}}{\underleftarrow{\lim}}\frac{L_{n}}{L_{n}^{\prime}%
}\rightarrow\underset{N_{1}}{\underrightarrow{\operatorname*{colim}}}%
\underset{N_{1}^{\prime}}{\underleftarrow{\lim}}\cdots\underset{N_{n}%
}{\underrightarrow{\operatorname*{colim}}}\underset{N_{n}^{\prime}%
}{\underleftarrow{\lim}}\frac{N_{n}}{N_{n}^{\prime}}\text{,}%
\]
i.e. instead of having the limit run over all $L_{i}$, it vanishes on a fixed
$L_{i}$ (depending only on $L_{1}$, $L_{1}^{\prime}$, $\ldots$, $L_{i-1}$,
$L_{i-1}^{\prime}$).
\end{enumerate}
\end{lemma}

\begin{proof}
In view of Cor. \ref{cor_Obvious}, this follows rather straightforwardly from
Beilinson's Definition \ref{Definition_AdeleOperatorIdeals}. For (1): Once
$f\in\operatorname*{Hom}\nolimits_{\triangle}(M_{1},M_{2})$ holds true for a
$k$-linear map $f$, Definition \ref{Definition_AdeleOperatorIdeals} allows us
to produce many such factorizations; firstly over%
\[
\left(  \frac{L_{1}}{L_{1}^{\prime}}\right)  _{\triangle^{\prime}}%
\rightarrow\left(  \frac{N_{1}}{N_{1}^{\prime}}\right)  _{\triangle^{\prime}%
}\text{,}%
\]
(for any prescribed $L_{1}$ and $N_{1}^{\prime}$) and then inductively further
down the flag $\triangle$. Conversely, given such factorizations, they clearly
define a $k$-linear map and the condition of Definition
\ref{Definition_AdeleOperatorIdeals} follows from the map being of this shape.
(2) and (3) follow just from unravelling Beilinson's definition in view of
Cor. \ref{cor_Obvious} and the fact that all $L_{\ell},L_{\ell}^{\prime}$ (for
all $\ell=1,\ldots,n$) are Beilinson lattices.
\end{proof}

\begin{proposition}
For $\triangle=\{(\eta_{0}>\cdots>\eta_{n})\}$ and $\mathcal{F}$ a coherent
sheaf, the presentation of Corollary \ref{cor_Obvious},%
\[
\mathcal{F}_{\triangle}=\underset{L_{1}}{\underrightarrow
{\operatorname*{colim}}}\underset{L_{1}^{\prime}}{\underleftarrow{\lim}}%
\cdots\underset{L_{n}}{\underrightarrow{\operatorname*{colim}}}\underset
{L_{n}^{\prime}}{\underleftarrow{\lim}}{}\frac{L_{n}}{L_{n}^{\prime}}\text{,}%
\]
also equips $\mathcal{F}_{\triangle}$ with a canonical structure as an
$n$-Tate object in ST $k$-modules (with their exact structure, Prop.
\ref{prop_STmodsQuasiAbelian}). Or, executing the colimits and limits, as an
ST $k$-module itself.
\end{proposition}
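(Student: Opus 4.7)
The plan is to induct on $n$, using the nested colim--lim presentation of Corollary \ref{cor_Obvious} as the skeleton. For the base case $n=0$ the expression collapses to the completion of a finitely generated $\mathcal{O}_{X,\eta_0}$-module, which becomes an ST $k$-module via the fine topology functor $\mathrm{fine}\colon\mathsf{Mod}(R)\to\mathsf{STMod}(R)$ of \S\ref{subsect_YekutieliSTRings}; a $0$-Tate object is simply an object of the ambient exact category, so no further structure is required at this stage.

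For the inductive step, I would regard each quotient $L_1/L_1'$, with $L_1'\subseteq L_1$ a pair of Beilinson lattices in $\mathcal{F}_{\eta_0}$, as a coherent sheaf on (an open of) the closed subscheme $\overline{\{\eta_1\}}$. The truncated flag $\triangle'=(\eta_1>\cdots>\eta_n)$ is a saturated flag on this subscheme of pure dimension $n-1$, so the inductive hypothesis endows $(L_1/L_1')_{\triangle'}$ with a canonical $(n-1)$-Tate object structure in $\mathsf{STMod}(k)$. Rewriting the presentation of Corollary \ref{cor_Obvious} as
\[
\mathcal{F}_\triangle \;=\; \underset{L_1}{\underrightarrow{\operatorname*{colim}}}\;\underset{L_1'}{\underleftarrow{\lim}}\;(L_1/L_1')_{\triangle'},
\]
the task then reduces to realising the inner limit as an admissible Pro-object and the outer colimit as an admissible Ind-object in the exact category $(n-1)\text{-}\mathsf{Tate}(\mathsf{STMod}(k))$; any single chosen pair $(L_1,L_1')$ then serves as the required Tate lattice in the sense of Definition \ref{Def_TateLattice}.

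The verification of admissibility is the crux. Shrinking $L_1'$ within a fixed $L_1$, the short exact sequence of coherent sheaves
\[
L_1'/L_1''\;\hookrightarrow\;L_1/L_1''\;\twoheadrightarrow\;L_1/L_1'
\]
remains short exact after applying $A(\triangle',-)$, which is exact on quasi-coherent sheaves (remark after Equation \ref{lTX1}); by induction this upgrades to an admissible short exact sequence of $(n-1)$-Tate objects, so the pro-transition maps are admissible epics. Enlarging $L_1$ while compatibly adjusting $L_1'$ (possible because sums and intersections of Beilinson lattices are again Beilinson lattices, by Noetherianity), the symmetric sequence $L_1/L_1'\hookrightarrow\tilde L_1/L_1'\twoheadrightarrow\tilde L_1/L_1$ furnishes admissible monics for the ind-transition maps.

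The main obstacle will be showing that these monics and epics are genuinely \emph{strict} in the quasi-abelian sense (Prop.\ \ref{prop_STmodsQuasiAbelian}), so that they constitute admissible morphisms in the iteratively built Tate categories rather than merely categorical ones. The crucial inputs are the compatibility of the fine ST topology with finitely generated sub- and quotient-modules at the base, and the preservation of admissibility under the inductive passage from $(n-1)\text{-}\mathsf{Tate}$ to $n\text{-}\mathsf{Tate}$; the latter reduces, via cofinality considerations of the kind furnished by \cite[Thm. 6.7]{TateObjectsExactCats}, to the exactness of $A(\triangle',-)$ already invoked. Finally, for the secondary claim that $\mathcal{F}_\triangle$ becomes an ST $k$-module upon literally executing the colimits and limits, this follows because $\mathsf{STMod}(k)$ admits both filtered colimits and inverse limits of ST topologies as established by Yekutieli \cite[\S 1.2]{MR1213064} (cf.\ Example \ref{example_YekutieliLaurentSTRingStructure} for the prototype of multiple Laurent series), and the realisation functor $n\text{-}\mathsf{Tate}(\mathsf{STMod}(k))\to\mathsf{STMod}(k)$ commutes with the given presentation by construction, just as in Example \ref{example_QpAsIndPro}.
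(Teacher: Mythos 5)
Your skeleton (exhibit the colim--lim presentation as an iterated admissible Ind/Pro-diagram and check that the transition maps are admissible) is the same as the paper's, which disposes of the whole matter by citing Yekutieli \cite[Lemma 4.3, (2), (4) for admissibility and (3), (6) for existence of the limits/colimits in $\mathsf{STMod}$]{MR3317764}. But your proposal does not actually close the point you yourself flag as ``the crux''. The step that fails is the reduction of strictness to the exactness of $A(\triangle^{\prime},-)$: exactness of the ad\`{e}le functor is a statement about underlying modules (or $\mathcal{O}_{X}$-module sheaves), whereas admissibility in the quasi-abelian category $\mathsf{STMod}(k)$ of Prop.~\ref{prop_STmodsQuasiAbelian} is a \emph{topological} condition --- an admissible epic must induce the quotient topology on its target and an admissible monic the subspace topology on its source. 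A bijective-on-modules exact sequence can perfectly well fail to be strict in $\mathsf{STMod}$ (this is exactly the kind of pathology the paper warns about for semi-topological modules), so ``remains short exact after applying $A(\triangle^{\prime},-)$'' does not ``upgrade'' to an admissible short exact sequence; the cofinality result \cite[Thm.~6.7]{TateObjectsExactCats} is about lattices inside an already-constructed Tate category and gives no purchase on this. The needed topological input (compatibility of the fine/limit/colimit ST topologies with the sub- and quotient modules occurring in the diagram) is precisely what \cite[Lemma 4.3]{MR3317764} supplies and what your argument leaves unproved.

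A secondary, structural gap: your induction hypothesis only asserts that each $(L_{1}/L_{1}^{\prime})_{\triangle^{\prime}}$ \emph{carries} an $(n-1)$-Tate structure. In the inductive step you need much more, namely that $\mathcal{G}\mapsto\mathcal{G}_{\triangle^{\prime}}$ is an \emph{exact functor} into $(n-1)$-$\mathsf{Tate}(\mathsf{STMod}(k))$, so that the sequences $L_{1}^{\prime}/L_{1}^{\prime\prime}\hookrightarrow L_{1}/L_{1}^{\prime\prime}\twoheadrightarrow L_{1}/L_{1}^{\prime}$ become admissible. As written, the induction does not carry that statement along, so ``by induction this upgrades'' is not available; you would have to strengthen the hypothesis to functoriality plus exactness (and then still face the topological strictness issue above). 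Minor additional quibbles: the Tate lattice is the Pro-object $\underleftarrow{\lim}_{L_{1}^{\prime}}(L_{1}/L_{1}^{\prime})_{\triangle^{\prime}}=(L_{1})_{\triangle^{\prime}}$ for a fixed $L_{1}$, not a ``pair $(L_{1},L_{1}^{\prime})$''; and $L_{1}/L_{1}^{\prime}$ is in general only supported on an infinitesimal thickening of $\overline{\{\eta_{1}\}}$, so the induction should be phrased for coherent sheaves supported set-theoretically on the subscheme rather than literally for coherent sheaves on $\overline{\{\eta_{1}\}}$.
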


\begin{proof}
We only need to know that the transition maps of the Ind- and Pro-diagrams are
admissible monics and epics. This was already shown by Yekutieli, albeit in a
slightly different language \cite[Lemma 4.3, (2) and (4)]{MR3317764}. For the
second claim, we only need to know that the respective limits and colimits
exist in ST\ modules; this is \cite[Lemma 4.3, (3) and (6)]{MR3317764}.
\end{proof}

\begin{theorem}
[Structure Theorem III]\label{Thm_COA_StructureTheorem3}Suppose $X$ is a
purely $n$-dimensional reduced scheme of finite type over a field $k$ and
$\triangle=\{(\eta_{0}>\cdots>\eta_{n})\}$ a saturated flag. Then each direct
summand of the upper-left object in Diagram \ref{lDiagStruct2} of Theorem
\ref{COA_StructureTheorem2} carries a canonical structure

\begin{enumerate}
\item of $n$-local fields,

\item of objects in $\left.  n\text{-}\mathsf{Tate}(\mathsf{Ab})\right.  $,
i.e. with values in abelian groups,

\item of objects in $\left.  n\text{-}\mathsf{Tate}(\mathsf{Vect}_{f})\right.
$, i.e. with values in finite-dimensional $k$-vector spaces,

\item of $k$-algebras,

\item (if $k$ is perfect) of topological $n$-local fields in the sense of Yekutieli,
\end{enumerate}

and one can find (non-canonically) a simultaneous field and $\left.
n\text{-}\mathsf{Tate}(\mathsf{Ab})\right.  $ isomorphism to a multiple
Laurent series field%
\[%
\bfig\node x(0,1200)[{\kappa((t_{1}))\cdots((t_{n}))}]
\node y(0,900)[{\kappa((t_{1}))\cdots[[t_{n}]]}]
\node z(900,900)[{\kappa((t_{1}))\cdots((t_{n-1}))}]
\node w(900,600)[{\kappa((t_{1}))\cdots[[t_{n-1}]]}]
\node u(1800,600)[{\kappa((t_{1}))\cdots((t_{n-2}))}]
\node v(1800,300)[\vdots]
\arrow/{^{(}->}/[y`x;]
\arrow/{->>}/[y`z;]
\arrow/{^{(}->}/[w`z;]
\arrow/{->>}/[w`u;]
\arrow[v`u;]
\efig
\]
with its standard field and $\left.  n\text{-}\mathsf{Tate}(\mathsf{Ab}%
)\right.  $ structure. Here $\kappa/k$ is a finite field extension.
\end{theorem}

If one is happy with plain field isomorphisms without extra structure, this is
of course part of the original results of Parshin and Beilinson. The
construction and very definition of the canonical TLF structure/ST module
structure is due to Yekutieli \cite{MR1213064}, \cite{MR3317764}. However, we
know from Example \ref{example_Yekutieli}, going back to Yekutieli's work,
that a general field isomorphism will not preserve this structure, and from
its variation Example \ref{example_YekutieliIndPro} that it would also not
preserve the $n$-Tate structure.

\subsection{Proof of Theorem \ref{Thm_COA_StructureTheorem3}}

We shall devote this entire subsection to the proof of Theorem
\ref{Thm_COA_StructureTheorem3}.

\subsubsection{Step 0: Preamble on our usage of Tate categories}

The argument will deal with objects which may simultaneously be regarded as
objects in the category of rings, ST modules and/or Tate objects over a base
category. There is a slight change with regards to what categories we work in
precisely, depending on whether $k$ is perfect or not. We make this case
distinction here, and it is valid for the rest of the section: Specifically,

\begin{itemize}
\item if $k$ is perfect, we work in the categories of $k$-algebras, ST modules
and Tate objects of finite-dimensional $k$-vector spaces, and as a shorthand
write%
\[
\left.  n\text{-}\mathsf{Tate}\right.  :=\left.  n\text{-}\mathsf{Tate}%
(\mathsf{Vect}_{f})\right.  \text{.}%
\]

\item If $k$ is not perfect, we work in the categories of rings and Tate
objects of all abelian groups. We use the shorthand%
\[
\left.  n\text{-}\mathsf{Tate}\right.  :=\left.  n\text{-}\mathsf{Tate}%
(\mathsf{Ab})\right.  \text{.}%
\]
In this case, simply ignore all statements about $k$-algebra structures,
$k$-vector space structures or ST\ module structures in the proof below.
\end{itemize}

\subsubsection{Step 1: Definition of auxiliary rings}

Suppose we are in the situation of the assumptions of the theorem.

\begin{definition}
For $j=0,1,\ldots,n$, we define a ring%
\begin{equation}
C_{j}:=\underset{i_{j}\geq1}{\underleftarrow{\lim}}\underset{f_{j}\notin
\eta_{j}}{\underrightarrow{\operatorname*{colim}}}\cdots\underset{i_{n}\geq
1}{\underleftarrow{\lim}}\frac{\mathcal{O}_{\eta_{n}}\otimes\mathcal{O}%
\left\langle f_{j}^{-\infty}\right\rangle \otimes\mathcal{O}\left\langle
f_{j+1}^{-\infty}\right\rangle \otimes\cdots\otimes\mathcal{O}\left\langle
f_{n-1}^{-\infty}\right\rangle }{\eta_{j-1}+\eta_{j}^{i_{j}}+\cdots+\eta
_{n}^{i_{n}}}\text{.}\label{lVC_1}%
\end{equation}
We denote by $q$ the quotient map%
\[
q:C_{j}\twoheadrightarrow C_{j}/\eta_{j}\text{.}%
\]

\end{definition}

We can equip $C_{j}$ and $q$ with a lot more structure than just being a
$k$-algebra and a $k$-algebra morphism: They also carry natural structures as

\begin{enumerate}
\item (Tate objects) Reading the limits and colimits in Equation \ref{lVC_1}
as diagrams, the definition describes an object in $\mathsf{Pro}^{a}(\left.
(n-j)\text{-}\mathsf{Tate}\right.  )$. In this category the definition of $q$
also makes sense, and it is an admissible epic, since it is the natural
mapping from a Pro-diagram to one of its entries.

\item (as ST\ modules) Equation \ref{lVC_1} also defines an object in
Yekutieli's category of ST\ modules. Equip the inner term with the fine ST
module structure. (Much like in Example
\ref{example_YekutieliLaurentSTRingStructure}) each limit is equipped with its
limit topology, resulting again in an ST\ module \cite[Lemma 1.2.19]%
{MR1213064}, and equip the colimits, which are localizations, with the fine
topology over the ring we are localizing (or equivalently with the colimit
topology \cite[Cor. 1.2.6]{MR1213064}); this makes them ST rings again. Then
$q$ is an admissible epic in ST modules and (equivalently) induces the
quotient topology by \cite[Lemma 4.3]{MR3317764}.
\end{enumerate}

We return to regarding $C_{j}$ as a ring, and study its properties:

\begin{lemma}
We have the following ring-theoretic properties:

\begin{enumerate}
\item $C_{j}$ is a one-dimensional $\eta_{j}$-adically complete semi-local
$k$-algebra with Jacobson radical $\eta_{j}$.

\item $C_{j}/\eta_{j}$ is a reduced Artinian ring.

\item $C_{j}=A_{X}(\eta_{j}>\cdots>\eta_{n},\mathcal{O}_{X})/\eta
_{j-1}=A_{\overline{\{\eta_{j-1}\}}}(\eta_{j}>\cdots>\eta_{n},\mathcal{O}%
_{\overline{\{\eta_{j-1}\}}})$.
\end{enumerate}
\end{lemma}

\begin{proof}
This is fairly clear: It is visibly an $\eta_{j}$-adically complete semi-local
ring with Jacobson radical $\eta_{j}$ and minimal primes all lying over
$\eta_{j-1}$. It follows that $C_{j}$ is one-dimensional. The identification
in (3) follows literally from unwinding the definition.
\end{proof}

Next, consider the normalization of $C_{j}$. We denote it by $C_{j}^{\prime}
$. This is a finite ring extension/$k$-algebra extension (since $C_{j}$ is
excellent). It is a finite product of complete discrete valuation rings, say
indexed by a variable $t$, i.e.%
\begin{equation}
C_{j}^{\prime}=%
{\textstyle\prod}
\mathcal{O}_{j,t}\qquad\text{with residue fields }\kappa_{j,t}:=\mathcal{O}%
_{j,t}/\mathfrak{m}_{j,t}\label{laact1}%
\end{equation}
and by the finiteness of normalization each $\kappa_{j,t}$ is finite over
$C_{j}/\eta_{j}$.

Consider $\operatorname*{Quot}(C_{j+1})$:\ It is the total ring of quotients
of $C_{j+1}$ as a ring and $k$-algebra. However, as this is a localization and
thus can be written as a colimit over its finitely generated $C_{j+1}%
$-submodules, it also can be given a natural structure as an $(n-j+1)$-Tate
object, or respectively as an ST module.

\begin{lemma}
\label{lem_helperquot}$C_{j}/\eta_{j}=\operatorname*{Quot}(C_{j+1})$. This is
true as rings, as $k$-algebras, Tate objects and ST modules.
\end{lemma}

\begin{proof}
The verification is immediate from the definitions, in each category.
\end{proof}

\subsubsection{Step 2: Setting up the auxiliary diagram}

The objects which we have just defined, fit into a big commutative diagram%
\begin{equation}%
\bfig\node eup(0,800)[\vdots]
\node yrings(0,400)[C_{j-1}/{\eta_{j-1}}]
\node xrings(0,0)[{\prod\mathcal{O}_{j,t}}]
\node xres(800,0)[{\prod\kappa_{j,t}}]
\node base(0,-400)[{C_j}]
\node baseres(800,-400)[{C_{j}/{\eta_{j}}}]
\arrow/{>>}/[xrings`xres;]
\arrow/{>}/[base`xrings;\gamma]
\arrow/{@{.>}@/^2em/}/[base`yrings;]
\arrow/{>>}/[base`baseres;]
\arrow/{-->}/[baseres`xres;]
\arrow/{.>}/[xrings`yrings;]
\arrow/{-->}/[yrings`eup;]
\node wxrings(800,-800)[{\prod\mathcal{O}_{j+1,t}}]
\node wxres(1600,-800)[{\prod\kappa_{j+1,t}}]
\node wbase(800,-1200)[{C_{j+1}}]
\node wbaseres(1600,-1200)[{C_{j+1}/{\eta_{j+1}}}]
\node ebot(1600,-1600)[\vdots]
\arrow/{>>}/[wxrings`wxres;]
\arrow/{>}/[wbase`wxrings;\gamma]
\arrow/{@{.>}@/^2em/}/[wbase`baseres;]
\arrow/{>>}/[wbase`wbaseres;]
\arrow/{-->}/[wbaseres`wxres;]
\arrow/{.>}/[wxrings`baseres;]
\arrow/{.>}/[ebot`wbaseres;]
\efig
\label{lFigA}%
\end{equation}
and on the upper left this diagram commences with%
\[%
\bfig\node yrings(0,400)[C_{0}]
\node xrings(0,0)[{\prod\mathcal{O}_{1,t}}]
\node xres(800,0)[{\prod\kappa_{1,t}}]
\node base(0,-400)[{C_1}]
\node baseres(800,-400)[\ddots]
\arrow/{>>}/[xrings`xres;]
\arrow/{>}/[base`xrings;\gamma]
\arrow/{@{.>}@/^2em/}/[base`yrings;]
\arrow/{>>}/[base`baseres;]
\arrow/{-->}/[baseres`xres;]
\arrow/{.>}/[xrings`yrings;]
\efig
\]
Let us quickly go through the various objects and arrows: Here $\mathcal{O}%
_{j,t}$ and $\kappa_{j,t}$ are the discrete valuation rings/rings of integers
resp. residue fields of Equation \ref{laact1}. We have allowed ourselves the
tiny abuse of notation to write \textquotedblleft$t$\textquotedblright\ to
index the factors of the products, even though for different $j$, the variable
$t$ will run through (in general) different finite indexing sets. Moreover,

\begin{enumerate}
\item (as rings, $k$-algebras) the upward dotted arrows are always the
inclusion into the total ring of quotients by Lemma \ref{lem_helperquot}.
These maps are injective. In the case of the unbent dotted arrow it is
additionally a product of the inclusions of the discrete valuation rings
$\mathcal{O}$ into their field of fractions. The maps denoted by $\gamma$ are
normalizations; the integral closure in the total ring of quotients. The
dashed upward arrows are products of finite field extensions. Each quotient
$C_{(-)}/\eta_{(-)}$ is itself a product of fields.

\item (as Tate objects, ST modules) the upward bent arrows are admissible
monics in Tate objects since they are the inclusion of an entry of an
admissible Ind-diagram into the Ind-object defined by this diagram.
Analogously, an admissible monic in ST modules for essentially the same
reason, just with the colimit carried out.
\end{enumerate}

Define both $\kappa_{0,t}$ and $\kappa_{0,t}^{\ast}$ as the field of fractions
of $\mathcal{O}_{1,t}$. Consider the left-most \textit{upward} arrow
$\prod\mathcal{O}_{1,t}\rightarrow\prod\kappa_{0,t}$ in the above Figure
\ref{lFigA}. This arrow is the product of maps $\mathcal{O}_{1,t}%
\hookrightarrow\kappa_{0,t}$, but these maps will usually not be the inclusion
of rings of integers into their field of fractions. We now define certain
rings, recursively: For $j=1,\ldots,n$ (and run through these in this order):

Denote by $\mathcal{O}_{j,t}^{\ast}$ the integral closure of $\mathcal{O}%
_{j,t}$ inside $\kappa_{j-1,t}^{\ast}$. Since the $\mathcal{O}_{j,t}$ are
complete discrete valuation rings, the $\mathcal{O}_{j,t}^{\ast}$ are also
complete discrete valuation rings, cf. Lemma \ref{comalg_lemma_reminder}%
.\ref{comalg_fact_D12} (there can only be one factor since we are inside a
field). We write $\kappa_{1,t}^{\ast}$ for their residue fields, so that
$\kappa_{1,t}^{\ast}/\kappa_{1,t}$ is a finite field extension. Now proceed to
$j+1$.

Let us quickly explain how to fit these new objects into Figure \ref{lFigA}:
For $j$, we get%
\[%
\bfig\node eup(0,800)[\vdots]
\node yrings(0,400)[{\prod\mathcal{O}^{\ast}_{j,t}}]
\node yringsright(800,400)[{\prod\mathcal{\kappa}_{j,t}^{\ast}}]
\arrow/{>>}/[yrings`yringsright;]
\node xrings(0,0)[{\prod\mathcal{O}_{j,t}}]
\node xres(800,0)[{\prod{\kappa}_{j,t}}]
\node base(0,-400)[{C_j}]
\node baseres(800,-400)[{C_{j}/{\eta_{j}}}]
\arrow/{>>}/[xrings`xres;]
\arrow/{>}/[base`xrings;\gamma]
\arrow/{@{.>}@/^2em/}/[base`eup;]
\arrow/{>>}/[base`baseres;]
\arrow/{-->}/[baseres`xres;]
\arrow/{>}/[xrings`yrings;]
\arrow/{.>}/[yrings`eup;]
\arrow/{-->}/[xres`yringsright;]
\node wxrings(800,-800)[{\prod\mathcal{O}_{j+1,t}}]
\node wxres(1600,-800)[{\prod{\kappa_{j+1,t}}}]
\node wbase(800,-1200)[{C_{j+1}}]
\node wbaseres(1600,-1200)[{C_{j+1}/{\eta_{j+1}}}]
\node ebot(1600,-1600)[\vdots]
\arrow/{>>}/[wxrings`wxres;]
\arrow/{>}/[wbase`wxrings;\gamma]
\arrow/{@{.>}@/^2em/}/[wbase`baseres;]
\arrow/{>>}/[wbase`wbaseres;]
\arrow/{-->}/[wbaseres`wxres;]
\arrow/{.>}/[wxrings`baseres;]
\arrow/{.>}/[ebot`wbaseres;]
\efig
\]
and going to $j+1$, the above defines $\mathcal{O}_{j+1,t}^{\ast}$ as in%
\[%
\bfig\node xrings(0,0)[{\prod\mathcal{O}^{\ast}_{j,t}}]
\node xres(800,0)[{\prod\kappa^{\ast}_{j,t}}]
\node base(0,-400)[{C_j}]
\node baseres(800,-400)[{\prod\mathcal{O}^{\ast}_{j+1,t}}]
\node baseresright(1600,-400)[{\prod\mathcal{{\kappa}}_{j+1,t}^{\ast}}]
\arrow/{>>}/[xrings`xres;]
\arrow/{>}/[base`xrings;]
\arrow/{.>}/[baseres`xres;]
\node wxrings(800,-800)[{\prod\mathcal{O}_{j+1,t}}]
\node wxres(1600,-800)[{\prod{\kappa}_{j+1,t}}]
\node wbase(800,-1200)[{C_{j+1}}]
\node wbaseres(1600,-1200)[{C_{j+1}/{\eta_{j+1}}}]
\node ebot(1600,-1600)[\vdots]
\arrow/{>>}/[wxrings`wxres;]
\arrow/{>}/[wbase`wxrings;]
\arrow/{@{.>}@/^2em/}/[wbase`xres;]
\arrow/{>>}/[wbase`wbaseres;]
\arrow/{>>}/[baseres`baseresright;]
\arrow/{-->}/[wbaseres`wxres;]
\arrow/{-->}/[wxres`baseresright;]
\arrow/{>}/[wxrings`baseres;]
\arrow/{.>}/[ebot`wbaseres;]
\efig
\]

This finishes the recursive definition along $j$.

\subsubsection{Step 3: A single field factor}

If we choose a field factor $\kappa_{0,t}$ of $C_{0}$, we get a corresponding
idempotent $e$, and cutting out the respective field factor from the above
Figure, induces a canonical choice of an index $t$ in each row and only these
factors will remain after applying $e$. For the rest of the proof, we work
exclusively with this chosen factor and define%
\[
\mathcal{O}_{j}:=\mathcal{O}_{j,t}^{\ast}\qquad\text{and}\qquad k_{j}%
:=\kappa_{j,t}^{\ast}\text{,}%
\]
so that $k_{j}$ is the residue field of the complete discrete valuation ring
$\mathcal{O}_{j}$. Using this new name, we see that we have finite ring
extensions $C_{j}\rightarrow\mathcal{O}_{j}$. We arrive at the diagram%
\begin{equation}%
\bfig\node x(0,1200)[K]
\node y(0,900)[\mathcal{O}_{1}]
\node yy(-300,600)[C_{1}]
\node z(300,900)[k_1]
\node w(300,600)[\mathcal{O}_{2}]
\node ww(0,300)[C_{2}]
\node u(600,600)[k_2]
\node v(600,300)[\vdots]
\arrow/>/[yy`y;]
\arrow/>/[ww`w;]
\arrow/{^{(}->}/[y`x;]
\arrow/{->>}/[y`z;]
\arrow/{^{(}->}/[w`z;]
\arrow/{->>}/[w`u;]
\arrow[v`u;]
\efig
\label{lFigB}%
\end{equation}
While it is outside the general pattern, it can easily be shown that we also
have a finite ring map $C_{0}\rightarrow K$; in fact this is the projection on
a direct factor of the ring $C_{0}$. Since $K$ is an $n$-local field, the
$k_{j}$ are $(n-j)$-local fields and $\mathcal{O}_{i+1}$ their first rings of integers.

\begin{keypoint}
\label{marker_KeyPoint}There is more structure:

\begin{enumerate}
\item (as Tate objects) Now $k_{0}:=K$, as a factor of $C_{0}$, is an $n$-Tate
object and inductively $\mathcal{O}_{j+1}$ and its maximal ideal
$\mathfrak{m}\subset\mathcal{O}_{j+1}$ are Tate lattices in $k_{j}$, and the
quotient $\mathcal{O}_{j+1}/\mathfrak{m}=k_{j+1}$ is an $(n-1)$-Tate object.
So all the $\mathcal{O}_{j}$ are objects in $\mathsf{Pro}^{a}(\left.
(n-j)\text{-}\mathsf{Tate}\right.  )$, and by sandwiching%
\begin{equation}
\mathfrak{m}^{N}\mathcal{O}_{j}\subseteq\eta_{j}\mathcal{O}_{j}\subseteq
\mathfrak{m}\mathcal{O}_{j}\label{lda2}%
\end{equation}
the morphism $C_{j}\rightarrow\mathcal{O}_{j}$ turns out to come from a
morphism of Pro-diagrams and thus the $C_{j}\rightarrow\mathcal{O}_{j}$ are
all morphisms in $\mathsf{Pro}^{a}(\left.  (n-j)\text{-}\mathsf{Tate}\right.
) $ as well.

\item (ST modules) Moreover, if $k$ is perfect, $k_{0}=K$, as a factor of
$C_{0}$, is an ST $k$-module. This ST module structure on $C_{0}$ is precisely
the one employed by Yekutieli, see \cite[\S 6]{MR3317764} for a survey, or
\cite[Definition 3.2.1]{MR1213064} and \cite[Prop. 3.2.4]{MR1213064} for
details. This renders all $k_{j}$ and $\mathcal{O}_{j}$ ST modules by the
sub-space and quotient topologies. By Equation \ref{lda2} and \cite[Prop.
1.2.20]{MR1213064} the morphism $C_{j}\rightarrow\mathcal{O}_{j} $ is a
morphism of ST\ modules.
\end{enumerate}
\end{keypoint}

\subsubsection{Step 4: Coordinatization}

Next, we work by induction, starting from $j=n$ again and working downward:

\textit{Induction Hypothesis: }Assume we have constructed and fixed an
isomorphism%
\[
\xi_{j}:k_{j}[[t_{j}]]\overset{\sim}{\longrightarrow}\mathcal{O}_{j}\text{,}%
\]
simultaneously in the categories of rings, $k$-algebras, $\mathsf{Pro}%
^{a}(\left.  (n-j)\text{-}\mathsf{Tate}\right.  )$, ST\ modules, along with a
commutative square%
\[%
\bfig\node ai(0,0)[{\underset{i_{j}}{\underleftarrow{\lim}} \, C_{j}/\eta
_{j}^{i_{j}}}]
\node bi(800,0)[{C_{j}/{\eta}_{j},}]
\node ci(0,500)[{\underset{i_{j}}{\underleftarrow{\lim}} \, k_{j}%
[[t_{j}]]/t_{j}^{i_{j}}}]
\node di(800,500)[k_{j}]
\arrow/{>>}/[ai`bi;]
\arrow/{>>}/[ci`di;]
\arrow/{>}/[ai`ci;]
\arrow/{>}/[bi`di;]
\efig
\]
where the right-hand side arrows are the quotient maps (in all categories in
question), and the upward arrows are

\begin{itemize}
\item (in rings resp. $k$-algebras) finite extensions,

\item (in ST\ modules) morphisms of ST\ modules,

\item (in Tate objects) on the left, a morphism of $\mathsf{Pro}^{a}(\left.
(n-j)\text{-}\mathsf{Tate}\right.  )$ objects, on the right in $\left.
(n-j)\text{-}\mathsf{Tate}\right.  $.\medskip
\end{itemize}

Let us now perform the induction: We start with $j:=n$. The finiteness of the
diagonal ring morphisms in Figure \ref{lFigB} yields the lower commutative
square in%
\[%
\bfig\node ai(0,0)[{\underset{i_{n}}{\underleftarrow{\lim}} \, C_{n}/\eta
_{n}^{i_{n}}}]
\node bi(800,0)[{C_{n}/{\eta}_{n}}.]
\node ci(0,500)[{  \underset{i_{n}}{\underleftarrow{\lim}} \, \mathcal{O}%
_{n}/\mathfrak{m}_{n}^{i_{n}} }]
\node di(800,500)[k_{n}]
\node ci2(0,1000)[{\underset{i_{n}}{\underleftarrow{\lim}} \, k_{n}%
[[t_{n}]]/t_{n}^{i_{n}}}]
\node di2(800,1000)[k_{n}]
\arrow/{>}/[ci2`ci;{{\xi}_{n}}]
\arrow/{=}/[di2`di;]
\arrow/{>>}/[ai`bi;]
\arrow/{>>}/[ci`di;]
\arrow/{>>}/[ci2`di2;]
\arrow/{>}/[ai`ci;]
\arrow/{>}/[bi`di;]
\efig
\]
By Cohen's Structure Theorem, we can find an isomorphism $\xi_{n}$ such that
we may attach the upper commutative square to this diagram. The claims about
the ST module morphisms, resp. $\mathsf{Pro}^{a}(\left.  \left.  0\right.
\text{-}\mathsf{Tate}\right.  )$, resp. $\left.  \left.  0\right.
\text{-}\mathsf{Tate}\right.  $, are all immediate.

Now, we establish the induction step: Suppose the case $j+1$ has been dealt
with, and we want to prove the induction hypothesis for $j$. The finiteness of
the diagonal ring morphisms in Figure \ref{lFigB} yields the lower commutative
square in%
\[%
\bfig\node ai(0,0)[{\underset{i_{j}}{\underleftarrow{\lim}} \, C_{j}/\eta
_{j}^{i_{j}}}]
\node bi(800,0)[{C_{j}/{\eta}_{j}},]
\node ci(0,500)[{  \underset{i_{j}}{\underleftarrow{\lim}} \, \mathcal{O}%
_{j}/\mathfrak{m}_{j}^{i_{j}} }]
\node di(800,500)[k_{j}]
\node ci2(0,1000)[{\underset{i_{j}}{\underleftarrow{\lim}} \, k_{j}%
[[t_{j}]]/t_{j}^{i_{j}}}]
\node di2(800,1000)[k_{j}]
\arrow/{>}/[ci2`ci;{{\xi}_{j}}]
\arrow/{=}/[di2`di;]
\arrow/{>>}/[ai`bi;]
\arrow/{>>}/[ci`di;]
\arrow/{>>}/[ci2`di2;]
\arrow/{>}/[ai`ci;]
\arrow/{>}/[bi`di;]
\efig
\]
where the upward arrows are finite ring morphisms. They also define morphisms
of Pro-objects as well as ST\ modules, by the\ Key\ Point
\ref{marker_KeyPoint}. Since $\mathcal{O}_{j}$ is an equicharacteristic
complete discrete valuation ring with residue field $k_{j}$, Cohen's Structure
Theorem allows us to pick a coefficient field isomorphic to $k_{j}$ in
$\mathcal{O}_{j}$, write $[-]_{\star}:k_{j}\hookrightarrow\mathcal{O}_{j}$,
and thus get an isomorphism of rings%
\begin{align*}
\xi_{j}:k_{j}[[t_{j}]]  & \longrightarrow\mathcal{O}_{j}\\
\sum_{s}a_{s}t_{j}^{s}  & \longmapsto\text{evaluate }\sum_{s}[a_{s}]_{\star
}t_{j}^{s}%
\end{align*}
with $a_{s}\in k_{j}$ and $t_{j}$ some (arbitrary) uniformizer of
$\mathcal{O}_{j}$. If $k$ is perfect, we can assume to have picked the
coefficient field as a sub-$k$-algebra and so that $\xi_{j}$ is a $k$-algebra
isomorphism. Otherwise, we must content ourselves with a ring isomorphism.
Rewrite this morphism as%
\begin{align}
\xi_{j}:\underset{i_{j}\geq1}{\underleftarrow{\lim}}k_{j}[[t_{j}%
]]/(t_{j}^{i_{j}})  & \longrightarrow\underset{i_{j}\geq1}{\underleftarrow
{\lim}}\mathcal{O}_{j}/\mathfrak{m}_{j}^{i_{j}}\label{lixa1}\\
\sum_{s}a_{s}t_{j}^{s}  & \longmapsto\text{evaluate }\sum_{s}[a_{s}]_{\star
}t_{j}^{s}\text{.}\nonumber
\end{align}
Now, if we can produce an entry-wise isomorphism between the Pro-diagrams
defined by either side of the morphism, and these are objects in a category
$\mathcal{C}$, this defines an isomorphism in $\mathsf{Pro}^{a}(\mathcal{C})$.

However, via $\xi_{j+1}$ this can be achieved%
\[
\xi_{j+1}:k_{j+1}[[t_{j+1}]]\overset{\sim}{\longrightarrow}\mathcal{O}%
_{j+1}\qquad\text{and}\qquad\operatorname*{Frac}\mathcal{O}_{j+1}%
=k_{j}\text{,}%
\]
and since by our induction hypothesis $\xi_{j+1}$ is an isomorphism in
$\mathsf{Pro}^{a}(\left.  (n-j-1)\text{-}\mathsf{Tate}\right.  )$, via the
field of fractions (resp. the corresponding colimit), this induces an
isomorphism%
\[
k_{j+1}((t_{j+1}))\overset{\sim}{\longrightarrow}k_{j}\qquad\text{in}%
\qquad\mathsf{Ind}^{a}\mathsf{Pro}^{a}(\left.  (n-j-1)\text{-}\mathsf{Tate}%
\right.  )\text{,}%
\]
and in fact in $(\left.  (n-j)\text{-}\mathsf{Tate}\right.  )$. Using this,
the evaluation $[a_{s}]_{\star}$ in Equation \ref{lixa1} becomes entry-wise an
isomorphism of $(\left.  (n-j)\text{-}\mathsf{Tate}\right.  )$-objects. It
follows that $\xi_{j}$, as defined in Equation \ref{lixa1}, is an isomorphism
in $\mathsf{Pro}^{a}(\left.  (n-j)\text{-}\mathsf{Tate}\right.  )$. For
ST\ modules, argue analogously (that is: carrying out the Pro-limit and
equipping it with the limit topology, respectively the colimit topology for
the colimit, the same argument shows that $\xi_{j}$ is an isomorphism of ST\ modules).

Finally, once the entire induction is done, we obtain a Tate object and
ST\ module isomorphism between $K$ and the multiple Laurent series. If $K$ is
perfect, this produces a parametrization of the $n$-local field and thus gives
an alternative proof that $K$ is a TLF (see Definition \ref{def_TLF}).
Finally, since%
\[
C_{0}=A(\triangle,\mathcal{O}_{X})\text{,}%
\]
this proves all our claims.

\subsection{Consequences}

\begin{theorem}
\label{Thm_ComparisonOfCubicalAlgebras}Suppose $X$ is a purely $n$-dimensional
reduced scheme of finite type over a field $k$ and $\triangle=\{(\eta
_{0}>\cdots>\eta_{n})\}$ a saturated flag.

\begin{enumerate}
\item (\cite[Theorem 5]{bgwTateModule}) There is a canonical isomorphism of
$n$-fold cubical algebras%
\[
E^{\operatorname*{Tate}}(\mathcal{O}_{X\triangle})\overset{\sim}%
{\longrightarrow}E_{\triangle}^{\operatorname*{Beil}}\text{.}%
\]

\item Suppose $k$ is perfect. Then for each field factor $K$ in $\mathcal{O}%
_{X\triangle}=\prod K$, cut out by the idempotent $e\in E_{\triangle
}^{\operatorname*{Beil}}$, there are canonical isomorphisms of $n$-fold
cubical algebras%
\[
eE_{\triangle}^{\operatorname*{Beil}}e\overset{\sim}{\longrightarrow
}E^{\operatorname*{Tate}}(K)\overset{\sim}{\longrightarrow}%
E^{\operatorname*{Yek}}(K)\text{.}%
\]

\item Suppose $k$ is perfect.\ Define $\triangle^{(i)}=(\eta_{i}>\eta
_{i+1}>\cdots>\eta_{n})$. Then $K$ admits a presentation%
\begin{align}
K  & =\underset{L_{1}}{\underrightarrow{\operatorname*{colim}}}\underset
{L_{1}^{\prime}}{\underleftarrow{\lim}}\frac{L_{1}}{L_{1}^{\prime}}\nonumber\\
& =\underset{L_{1}}{\underrightarrow{\operatorname*{colim}}}\underset
{L_{1}^{\prime}}{\underleftarrow{\lim}}\underset{L_{2}}{\underrightarrow
{\operatorname*{colim}}}\underset{L_{2}^{\prime}}{\underleftarrow{\lim}}%
\frac{L_{2}}{L_{2}^{\prime}}\nonumber\\
& \qquad\vdots\label{laum1}\\
& =\underset{L_{1}}{\underrightarrow{\operatorname*{colim}}}\underset
{L_{1}^{\prime}}{\underleftarrow{\lim}}\underset{L_{2}}{\underrightarrow
{\operatorname*{colim}}}\cdots\underset{L_{n}}{\underrightarrow
{\operatorname*{colim}}}\underset{L_{n}^{\prime}}{\underleftarrow{\lim}}%
\frac{L_{n}}{L_{n}^{\prime}}\nonumber
\end{align}
where, recursively, $L_{i+1}^{\prime}\hookrightarrow L_{i+1}$ are Yekutieli
lattices in $K$ (for $i=0$) resp. $L_{i}/L_{i}^{\prime}$ (for $1\leq i<n$).
But presenting $K$ as a direct summand of $A(\triangle,\mathcal{O}_{X})$, say
$K=eA(\triangle,\mathcal{O}_{K})$ with $e$ the idempotent, there is also such
a presentation,%
\begin{align}
eA(\triangle,\mathcal{O}_{K})  & =e\,\underset{L_{1}}{\underrightarrow
{\operatorname*{colim}}}\underset{L_{1}^{\prime}}{\underleftarrow{\lim}}%
\frac{L_{1}}{L_{1}^{\prime}}\nonumber\\
& =e\,\underset{L_{1}}{\underrightarrow{\operatorname*{colim}}}\underset
{L_{1}^{\prime}}{\underleftarrow{\lim}}\underset{L_{2}}{\underrightarrow
{\operatorname*{colim}}}\underset{L_{2}^{\prime}}{\underleftarrow{\lim}}%
\frac{L_{2}}{L_{2}^{\prime}}\nonumber\\
& \qquad\vdots\label{laum2}\\
& =e\,\underset{L_{1}}{\underrightarrow{\operatorname*{colim}}}\underset
{L_{1}^{\prime}}{\underleftarrow{\lim}}\underset{L_{2}}{\underrightarrow
{\operatorname*{colim}}}\cdots\underset{L_{n}}{\underrightarrow
{\operatorname*{colim}}}\underset{L_{n}^{\prime}}{\underleftarrow{\lim}}%
\frac{L_{n}}{L_{n}^{\prime}}\text{,}\nonumber
\end{align}
where $L_{i+1}^{\prime}\hookrightarrow L_{i+1}$ are Beilinson lattices for the
flag $\triangle^{(i)}$ in $\mathcal{O}_{\eta_{0}}$ (for $i=0$) resp.
$L_{i}/L_{i}^{\prime}$ (for $1\leq i<n$). Under an isomorphism%
\[
K\overset{\sim}{\longrightarrow}eA(\triangle,\mathcal{O}_{K})\text{,}%
\]
these presentations sandwich each other, i.e. levelwise (i.e. in each row of
Equations \ref{laum1} along with the same-numbered row in Equations
\ref{laum2}), the Yekutieli and Beilinson lattices pairwise sandwich each
other. And in fact, so they do with all Tate lattices.
\end{enumerate}
\end{theorem}

\begin{proof}
(1) See \cite[Theorem 5]{bgwTateModule}.\newline(2) We write%
\begin{equation}
A(\triangle,\mathcal{O}_{X})=\prod K_{j}\text{,}\label{lWims0}%
\end{equation}
where $K_{j}$ are the $n$-local field factors. Our $K$ is one of these
factors. By Theorem \ref{Thm_COA_StructureTheorem3} there is an isomorphism%
\begin{equation}
\xi:K\longrightarrow k^{\prime}((t_{1}))((t_{2}))\cdots((t_{n}))\text{,}%
\label{lWims1}%
\end{equation}
simultaneously as $k$-algebras (since we assume that $k$ is perfect), and
$n$-Tate objects in finite-dimensional $k$-vector spaces, and ST\ modules. By
the first part of the theorem,%
\[
E^{\operatorname*{Tate}}(\mathcal{O}_{X\triangle})\overset{\sim}%
{\longrightarrow}E_{\triangle}^{\operatorname*{Beil}}%
\]
and if $e$ denotes the idempotent cutting out the field factor in question,%
\[
E^{\operatorname*{Tate}}(K)=eE^{\operatorname*{Tate}}(\mathcal{O}_{X\triangle
})e\overset{\sim}{\longrightarrow}eE_{\triangle}^{\operatorname*{Beil}%
}e\text{.}%
\]
On the other hand, since $\xi$ is also an isomorphism of $n$-Tate objects, it
clearly preserves endomorphism algebras, and therefore
\[
E^{\operatorname*{Tate}}(K)\cong E^{\operatorname*{Yek}}(K)
\]
by Theorem \ref{Theorem_LaurentYekIsTate}.\newline(3) Before we prove this, we
should explain that this follows from a very general principle: If
$\mathcal{C}$ is any idempotent complete exact category and an object
$X\in\mathsf{Tate}^{el}(\mathcal{C})$ can be presented as%
\[
X:=\underset{L_{i}}{\underrightarrow{\operatorname*{colim}}}\underset{L_{j}%
}{\underleftarrow{\lim}}\,\frac{L_{i}}{L_{j}}\text{,}%
\]
where $L_{i}\hookrightarrow L_{j}$ (for $i\leq j$) are Tate lattices in it,
then for every Tate lattice $L$, which need not be among these in the
presentation, there exist indices $i_{\vee},i_{\wedge}$ such that%
\[
L_{i_{\vee}}\hookrightarrow L\hookrightarrow L_{i_{\wedge}}\text{,}%
\]
i.e. arbitrary Tate lattices can be sandwiched by the lattices from the
collection $\{L_{i}\}_{i\in I}$ (\textit{Details: }The relevant underpinning
result is \cite[Theorem 6.7]{TateObjectsExactCats}. In fact, we have already
used exactly this kind of argument in the proof of Lemma \ref{lemma:Ocfin} and
we refer the reader to this proof for a complete discussion).

Obviously, as this property holds true for arbitrary idempotent complete exact
categories $\mathcal{C}$, it means that we can (inductively)\ also apply it to
objects in $n$-Tate categories. That is, if we have an object of the shape%
\[
X=\underset{L_{1}}{\underrightarrow{\operatorname*{colim}}}\underset
{L_{1}^{\prime}}{\underleftarrow{\lim}}\underset{(n-1)\text{-Tate object}%
}{\underbrace{\underset{L_{2}}{\underrightarrow{\operatorname*{colim}}}%
\cdots\underset{L_{n}}{\underrightarrow{\operatorname*{colim}}}\underset
{L_{n}^{\prime}}{\underleftarrow{\lim}}\frac{L_{n}}{L_{n}^{\prime}}}}%
\]
in an $n$-Tate category, where the $n$-Tate object is presented by quotients
$L_{1}/L_{1}^{\prime}$, which are $(n-1)$-Tate objects, and each $L_{1}%
/L_{1}^{\prime}$ by quotients $L_{2}/L_{2}^{\prime}$, which are $(n-2)$-Tate
objects etc., then levelwise, i.e. for the rightmost colimit-limit pair in
each of the following rows%
\begin{align*}
X  & =\underset{L_{1}}{\underrightarrow{\operatorname*{colim}}}\underset
{L_{1}^{\prime}}{\underleftarrow{\lim}}\frac{L_{1}}{L_{1}^{\prime}}\\
& =\underset{L_{1}}{\underrightarrow{\operatorname*{colim}}}\underset
{L_{1}^{\prime}}{\underleftarrow{\lim}}\underset{L_{2}}{\underrightarrow
{\operatorname*{colim}}}\underset{L_{2}^{\prime}}{\underleftarrow{\lim}}%
\frac{L_{2}}{L_{2}^{\prime}}\\
& \qquad\vdots\\
& =\underset{L_{1}}{\underrightarrow{\operatorname*{colim}}}\underset
{L_{1}^{\prime}}{\underleftarrow{\lim}}\underset{L_{2}}{\underrightarrow
{\operatorname*{colim}}}\cdots\underset{L_{n}}{\underrightarrow
{\operatorname*{colim}}}\underset{L_{n}^{\prime}}{\underleftarrow{\lim}}%
\frac{L_{n}}{L_{n}^{\prime}}\text{,}%
\end{align*}
each Tate lattice in the $i$-th row is sandwiched among lattices taken from
these systems $\{L_{i}\}$. By Corollary \ref{cor_Obvious} the left-hand side
in Equation \ref{lWims0} has the presentation%
\begin{equation}
A(\eta_{0}>\cdots>\eta_{n},\mathcal{O}_{X})=\underset{L_{1}}{\underrightarrow
{\operatorname*{colim}}}\underset{L_{1}^{\prime}}{\underleftarrow{\lim}}%
\cdots\underset{L_{n}}{\underrightarrow{\operatorname*{colim}}}\underset
{L_{n}^{\prime}}{\underleftarrow{\lim}}{}\frac{L_{n}}{L_{n}^{\prime}}%
\text{,}\label{laa1}%
\end{equation}
where the lattices $\{L_{i},L_{i}^{\prime}\}$ are Beilinson lattices of the
various levels, so these Beilinson lattices define Tate lattices in
$A(\eta_{0}>\cdots>\eta_{n},\mathcal{O}_{X})$. The TLF $K$ on the right-hand
side in Equation \ref{lWims0} also has such a presentation as an $n$-Tate
object%
\begin{equation}
K=\underset{L_{1}}{\underrightarrow{\operatorname*{colim}}}\underset
{L_{1}^{\prime}}{\underleftarrow{\lim}}\cdots\underset{L_{n}}{\underrightarrow
{\operatorname*{colim}}}\underset{L_{n}^{\prime}}{\underleftarrow{\lim}}%
{}\frac{L_{n}}{L_{n}^{\prime}}\text{,}\label{laa2}%
\end{equation}
where the lattices $\{L_{i},L_{i}^{\prime}\}$ are Yekutieli lattices, so these
Yekutieli lattices define Tate lattices in $K$. Since our isomorphism $\xi$ is
an isomorphism of TLFs, these Yekutieli lattices are precisely the same as
those in the TLF\ factor cut out from the ad\`{e}les $A(\eta_{0}>\cdots
>\eta_{n},\mathcal{O}_{X})$. Now we may run the above argument about levelwise
sandwiching lattices in either way: Either, using the presentation in Equation
\ref{laa1}, we deduce that all Tate lattices are sandwiched by Beilinson
lattices, but the Yekutieli lattices are such Tate lattices -- or using the
presentation in Equation \ref{laa2}, we deduce that all Tate lattices are
sandwiched by Yekutieli lattices, but Beilinson lattices are such Tate lattices.
\end{proof}

We can use Theorem \ref{Thm_COA_StructureTheorem3} to obtain a formulation `in coordinates':

\begin{definition}
Suppose $(A,\{I_{i}^{\pm}\}_{i=1,\ldots,n})$ is a Beilinson $n$-fold cubical
algebra. A \emph{system of good idempotents} consists of elements $P_{i}%
^{+}\in A$ with $i=1,\ldots,n$ such that the following conditions are met:

\begin{itemize}
\item $[P_{i}^{+},P_{j}^{+}]=0$,\qquad(pairwise commutativity)

\item $P_{i}^{+2}=P_{i}^{+}$,

\item $P_{i}^{+}A\subseteq I_{i}^{+}$,

\item $P_{i}^{-}A\subseteq I_{i}^{-}\qquad$(and we define $P_{i}%
^{-}:=\mathbf{1}_{A}-P_{i}^{+}$).
\end{itemize}
\end{definition}

This definition originates from \cite[Def. 14]{MR3207578}.

\begin{proposition}
Let $X/k$ be a reduced finite type scheme of pure dimension $n$ over a perfect
field $k$. If $\triangle$ is a saturated flag of points and $K$ a field factor
in%
\begin{equation}
\mathcal{O}_{X\triangle}=\prod_{m}K_{m}\text{,}\label{lwza2}%
\end{equation}
then an isomorphism%
\begin{equation}
K\simeq\kappa((t_{n}))((t_{n-1}))\cdots((t_{1}))\qquad\text{with}\qquad
\lbrack\kappa:k]<\infty\label{lwza1}%
\end{equation}
as in Theorem \ref{Thm_COA_StructureTheorem3} can be chosen so that for $f\in
E^{\operatorname*{Beil}}(K)$ we have the following characterization of the ideals:

\begin{enumerate}
\item $f\in I_{i}^{+}$ holds iff for all choices of $e_{1},\ldots,e_{i-1}%
\in\mathbf{Z}$ there exists some $e_{i}\in\mathbf{Z}$ such that instead of
needing to run over the $i$-th colimit in%
\[
\operatorname*{im}(f)\subseteq\left\{  \underset{e_{1}}{\underrightarrow
{\operatorname*{colim}}}\underset{j_{1}}{\underleftarrow{\lim}}\cdots
\widehat{\underset{e_{i}}{\underrightarrow{\operatorname*{colim}}}}%
\cdots\underset{e_{n}}{\underrightarrow{\operatorname*{colim}}}\underset
{j_{n}}{\underleftarrow{\lim}}\,\sum_{\alpha_{1}=-e_{1},\ldots,\alpha
_{n}=-e_{n}}^{j_{1}-1,\ldots,j_{n}-1}a_{\alpha_{1}\ldots\alpha_{n}}%
t_{1}^{\alpha_{1}}\cdots t_{n}^{\alpha_{n}}\right\}  \text{,}%
\]
it can, as indicated by the omission symbol $\widehat{(-)}$, be replaced by
this index $e_{i}$.

\item $f\in I_{i}^{-}$ holds iff for all $e_{1},\ldots,e_{i-1}\in\mathbf{Z} $
there exists $e_{i}\in\mathbf{Z}$ so that the $i$-th colimit can be replaced,
as in%
\[
\left\{  \underset{e_{1}}{\underrightarrow{\operatorname*{colim}}}%
\underset{j_{1}}{\underleftarrow{\lim}}\cdots\widehat{\underset{e_{i}%
}{\underrightarrow{\operatorname*{colim}}}}\cdots\underset{e_{n}%
}{\underrightarrow{\operatorname*{colim}}}\underset{j_{n}}{\underleftarrow
{\lim}}\,\sum_{\alpha_{1}=-e_{1},\ldots,\alpha_{n}=-e_{n}}^{j_{1}%
-1,\ldots,j_{n}-1}a_{\alpha_{1}\ldots\alpha_{n}}t_{1}^{\alpha_{1}}\cdots
t_{n}^{\alpha_{n}}\right\}  \subseteq\ker(f)\text{,}%
\]
by the index $e_{i}$.

\item Fix \emph{such} isomorphisms for all field factors $K_{m}$ in Equation
\ref{lwza2}. Denote by $\kappa_{m}$ the last residue field of $K_{m}$. If we
define the $\kappa_{m}$-linear maps%
\[
\left.  ^{m}P_{i}^{+}\right.  \,\sum a_{\alpha_{1}\ldots\alpha_{n}}%
t_{1}^{\alpha_{1}}\cdots t_{n}^{\alpha_{n}}=\sum_{\alpha_{i}\geq0}%
a_{\alpha_{1}\ldots\alpha_{n}}t_{1}^{\alpha_{1}}\cdots t_{n}^{\alpha_{n}%
}\qquad\text{(for }1\leq i\leq n\text{)}%
\]
on the right-hand side in Equation \ref{lwza1} for each field factor $K_{m}$,
then the aforementioned isomorphisms equip $\mathcal{O}_{X\triangle}$ with a
system of good idempotents.%
\begin{align*}
P_{i}^{+}:\mathcal{O}_{X\triangle}  & \longrightarrow\mathcal{O}_{X\triangle
}\\%
{\textstyle\prod\nolimits_{m=1}^{w}}
K_{m}  & \longrightarrow%
{\textstyle\prod\nolimits_{m=1}^{w}}
K_{m}\\
(x_{1},\ldots,x_{w})  & \longmapsto(\left.  ^{1}P_{i}^{+}\right.  x_{1}%
,\ldots,\left.  ^{w}P_{i}^{+}\right.  x_{w})\text{.}%
\end{align*}

\end{enumerate}
\end{proposition}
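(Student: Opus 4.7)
The plan is to pull the entire problem back to multiple Laurent series via Theorem \ref{Thm_COA_StructureTheorem3}, and then to translate the characterization of the ideals $I_i^{\pm}$ supplied by Lemma \ref{lemma_indproreform2} into the coordinate picture. For each field factor $K_m$ choose an isomorphism
\[
\phi_m\colon K_m\overset{\sim}{\longrightarrow}\kappa_m((t_n))\cdots((t_1))
\]
as produced by Theorem \ref{Thm_COA_StructureTheorem3}, which is simultaneously an isomorphism of rings and of $n$-Tate objects in $\mathsf{Vect}_f$ (and of TLFs, though this will not be used). By Corollary \ref{cor_ToStructTheorem3}, and more canonically by Proposition \ref{Prop_RelationOfLattices}, Beilinson lattices inside $K_m$ are final and cofinal among the standard Laurent lattices $\frac{1}{t_1^{e_1}\cdots t_n^{e_n}}\kappa_m[t_1,\ldots,t_n]/(t_1^{j_1},\ldots,t_n^{j_n})$. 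Under $\phi_m$ the ad\`elic ind-pro presentation of $K_m$ given by Corollary \ref{cor_Obvious} therefore becomes equivalent, as a diagram in the appropriate Tate category, to the concrete multi-indexed $\operatorname*{colim}$--$\lim$ appearing in the statement.

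With this identification in place, claims (1) and (2) become direct rereadings of Lemma \ref{lemma_indproreform2}(2)--(3). Indeed, Lemma \ref{lemma_indproreform2}(2) asserts that $f\in I_{i\triangle}^+$ holds exactly when, for each fixed choice of the first $i-1$ colimit/limit indices in the target, $f$ factors through a single entry at the $i$-th colimit; in the Laurent-series picture this single entry is precisely the replacement of the running $i$-th colimit by a fixed index $e_i$, which is the condition in claim (1). Claim (2) is obtained analogously from Lemma \ref{lemma_indproreform2}(3), reading ``factors through'' dually as ``vanishes on the complementary sub-object''. The only point that needs care is that Lemma \ref{lemma_indproreform2} is phrased in terms of arbitrary Beilinson lattices, whereas claims (1) and (2) refer only to the standard Laurent lattices; this is harmless because the two systems are mutually cofinal, so existence of a Beilinson refinement witnessing the factorization is equivalent to existence of a suitable integer $e_i$.

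For claim (3) one verifies the four axioms of a system of good idempotents factor by factor on $\mathcal{O}_{X\triangle}=\prod K_m$. Pairwise commutativity $[P_i^+,P_j^+]=0$ and idempotency $(P_i^+)^2=P_i^+$ are immediate from the truncation formula. That $P_i^+$ actually lies in $E^{\operatorname*{Beil}}$, and not merely in $\operatorname*{Hom}_k$, follows because truncation is manifestly a morphism of $n$-Tate objects on the Laurent model, and via $\phi_m$ together with Theorem \ref{Thm_ComparisonOfCubicalAlgebras} this identifies with membership in Beilinson's algebra. Finally, the image of $P_i^+$ consists of series supported in $\alpha_i\geq 0$, hence factors through the fixed colimit index $e_i=0$ in the target; by part (1) this forces $P_i^+\in I_i^+$, and so $P_i^+A\subseteq I_i^+$ since $I_i^+$ is a two-sided ideal. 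The analogous statement $P_i^-A\subseteq I_i^-$ follows from part (2), since $P_i^-=\mathbf{1}-P_i^+$ vanishes on the sub-lattice spanned by monomials with $\alpha_i\geq 0$.

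The main technical obstacle lies in the second paragraph: one must ensure that the Beilinson $\operatorname*{colim}$--$\lim$ presentation of $K_m$ agrees, up to cofinal and final subsystems, with the Laurent lattice ordering compatibly in all $n$ directions simultaneously, and moreover that this compatibility survives the passage from the $n$-Tate reformulation of Lemma \ref{lemma_indproreform2} to the concrete exponent description used in the statement. This compatibility is exactly what Theorem \ref{Thm_COA_StructureTheorem3} and Proposition \ref{Prop_RelationOfLattices} were designed to supply, so once those are invoked the remainder of the argument is a direct translation.
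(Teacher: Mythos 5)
Your proposal is correct and follows essentially the same route as the paper: both arguments pull the problem back to the Laurent-series model via the $n$-Tate-compatible isomorphism of Theorem \ref{Thm_COA_StructureTheorem3}, use the mutual cofinality of Beilinson lattices and the standard Laurent lattices (Corollary \ref{cor_ToStructTheorem3}, Proposition \ref{Prop_RelationOfLattices}) to translate the factorization criteria of Lemma \ref{lemma_indproreform2}(2)--(3) into the exponent conditions of (1) and (2), and then verify (3) by taking $e_i=0$. Your explicit remark that $P_i^+$ lies in $E^{\operatorname*{Beil}}$ because truncation is a morphism of $n$-Tate objects (combined with Theorem \ref{Thm_ComparisonOfCubicalAlgebras}) is a point the paper leaves implicit, but it is consistent with the paper's argument.
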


We stress that (3) would not be true for a randomly chosen field isomorphism
in Equation \ref{lwza1}.

\begin{proof}
(1) + (2): This is just unravelling properties that we have already
established by now. By Lemma \ref{lemma_indproreform2} we know that $f\in
I_{i\triangle}^{+}(K,K)$ holds if and only if $f$ admits a factorization%
\begin{equation}
\underset{L_{1}}{\underrightarrow{\operatorname*{colim}}}\underset
{L_{1}^{\prime}}{\underleftarrow{\lim}}\cdots\underset{L_{n}}{\underrightarrow
{\operatorname*{colim}}}\underset{L_{n}^{\prime}}{\underleftarrow{\lim}}%
\frac{L_{n}}{L_{n}^{\prime}}\longrightarrow\underset{N_{1}}{\underrightarrow
{\operatorname*{colim}}}\underset{N_{1}^{\prime}}{\underleftarrow{\lim}}%
\cdots\underset{N_{i}}{\underrightarrow{\widehat{\operatorname*{colim}}}%
}\cdots\underset{N_{n}}{\underrightarrow{\operatorname*{colim}}}%
\underset{N_{n}^{\prime}}{\underleftarrow{\lim}}\frac{N_{n}}{N_{n}^{\prime}%
}\text{,}\label{lcwb1}%
\end{equation}
where the $L_{(-)},L_{(-)}^{\prime},N_{(-)},N_{(-)}^{\prime}$ run over
suitable Beilinson lattices. This means that instead of the colimit over
$N_{i}$, the image factors through a fixed $N_{i}$ (allowed to depend on
$N_{1},N_{1}^{\prime},\ldots,N_{i-1},N_{i-1}^{\prime}$). In Theorem
\ref{Thm_COA_StructureTheorem3} we can pick the isomorphism in such a way that
it stems from an isomorphism of the underlying $n$-Tate objects. So this
isomorphism sends these Beilinson lattices to Tate lattices of $\kappa
((t_{n}))\cdots((t_{1}))$ with its standard $n$-Tate object structure. For
this Tate object structure, see Example
\ref{example_KatoIndProOfLaurentSeries}, i.e. slightly rewritten%
\begin{align*}
& \kappa((t_{n}))((t_{n-1}))\ldots((t_{1}))\\
& \qquad\qquad=\underset{e_{1}}{\underrightarrow{\operatorname*{colim}}%
}\underset{j_{1}}{\underleftarrow{\lim}}\cdots\underset{e_{n}}%
{\underrightarrow{\operatorname*{colim}}}\underset{j_{n}}{\underleftarrow
{\lim}}\frac{1}{t_{1}^{e_{1}}\cdots t_{n}^{e_{n}}}\kappa\lbrack t_{1}%
,\ldots,t_{n}]/(t_{1}^{j_{1}},\ldots,t_{n}^{j_{n}})\\
& \qquad\qquad=\underset{e_{1}}{\underrightarrow{\operatorname*{colim}}%
}\underset{j_{1}}{\underleftarrow{\lim}}\cdots\underset{e_{n}}%
{\underrightarrow{\operatorname*{colim}}}\underset{j_{n}}{\underleftarrow
{\lim}}\,\sum_{\alpha_{1}=-e_{1},\ldots,\alpha_{n}=-e_{n}}^{j_{1}%
-1,\ldots,j_{n}-1}a_{\alpha_{1}\ldots\alpha_{n}}t_{1}^{\alpha_{1}}\cdots
t_{n}^{\alpha_{n}}\text{.}%
\end{align*}
Now, as the image factors through some fixed $N_{i}$ in Equation \ref{lcwb1},
this is equivalent to factoring over some fixed $e_{i}\in\mathbf{Z}$. Stated
along with its dependencies on the other indices this becomes: For all
$e_{1},\ldots,e_{i-1}\in\mathbf{Z}$, there exists $e_{i}\in\mathbf{Z}$, so
that%
\[
\alpha_{i}<e_{i}\Rightarrow a_{\alpha_{1}\ldots\alpha_{n}}=0\text{.}%
\]
It is clear that we can run this argument backwards as well. The rest can be
done in an analogous fashion.

(3) For each fixed $m$, on $K_{m}$ we see that the $\left.  ^{m}P_{i}\right.
$ are pairwise orthogonal, therefore commuting, idempotents. On $\mathcal{O}%
_{X\triangle}$ we deduce that all $\left.  ^{m}P_{i}\right.  $ are again
pairwise orthogonal and then use that the sum of pairwise orthogonal
idempotents is again an idempotent. To check $P_{i}^{+}A\subseteq I_{i}^{+}$
and $P_{i}^{-}A\subseteq I_{i}^{-}$, one can just use $e_{i}:=0$ in (1) resp. (2).
\end{proof}

\section{\label{sect_DiffTypesOfLattices}Different types of lattices}

Suppose we look at some flag of points $\triangle=\{(\eta_{0}>\cdots>\eta
_{n})\}$ in a scheme $X$, say reduced, pure dimensional, and of finite type
over a perfect field $k$. In Theorem \ref{Thm_ComparisonOfCubicalAlgebras} we
have seen that a higher local field factor $K$ of the ad\`{e}les
$\mathcal{O}_{\triangle}\underset{\operatorname*{def}}{=}A(\triangle
,\mathcal{O}_{X})$ may be presented as%
\[
K=\underset{L_{1}}{\underrightarrow{\operatorname*{colim}}}\underset
{L_{1}^{\prime}}{\underleftarrow{\lim}}\underset{L_{2}}{\underrightarrow
{\operatorname*{colim}}}\cdots\underset{L_{n}}{\underrightarrow
{\operatorname*{colim}}}\underset{L_{n}^{\prime}}{\underleftarrow{\lim}%
}\,\frac{L_{n}}{L_{n}^{\prime}}%
\]
(e.g. in the category of $n$-Tate objects or as ST\ modules), where one may
either let the $L_{i},L_{i}^{\prime}$ run through Beilinson, Tate or Yekutieli
lattices. We had also seen that this implies that all these three families of
lattices sandwich each other, see Theorem
\ref{Thm_ComparisonOfCubicalAlgebras} for the precise statement. One may ask
for a much stronger property: Could it be true that there is an
(order-preserving) bijection between all these sets of lattices?

Indeed, at first sight, this looks promising: Using the presentation where all
$L_{i},L_{i}^{\prime}$ are Beilinson lattices, we have%
\begin{equation}
K=\underset{L_{1}}{\underrightarrow{\operatorname*{colim}}}\left(
\underset{L_{1}^{\prime}}{\underleftarrow{\lim}}\underset{\left.
(n-1)\text{-}\mathsf{Tate}(\mathsf{Vect}_{f})\right.  }{\underbrace
{\cdots\underset{L_{n}}{\underrightarrow{\operatorname*{colim}}}%
\underset{L_{n}^{\prime}}{\underleftarrow{\lim}}}{}}\frac{L_{n}}{L_{n}%
^{\prime}}\right)  \label{lMis1}%
\end{equation}
and thus for each Beilinson lattice $L_{1}$ we get a Pro-subobject of the
$n$-Tate object $K$. The quotient by the latter has the shape%
\[
\underset{L_{1}}{\underrightarrow{\operatorname*{colim}}}\widehat
{\underset{L_{1}^{\prime}}{\underleftarrow{\lim}}}\underset{\left.
(n-1)\text{-}\mathsf{Tate}(\mathsf{Vect}_{f})\right.  }{\underbrace
{\underset{L_{2}}{\underrightarrow{\operatorname*{colim}}}\cdots
\underset{L_{n}}{\underrightarrow{\operatorname*{colim}}}\underset
{L_{n}^{\prime}}{\underleftarrow{\lim}}{}}}\frac{L_{n}}{L_{n}^{\prime}%
}\text{,}%
\]
where $\widehat{(-)}$ denotes omission, and this is visibly an Ind-quotient in
the outer-most Tate category. Thus, rewriting the bracket in Equation
\ref{lMis1} as $L_{\triangle^{\prime}}$ (recall that this notation was defined
to mean $A_{X}(\triangle^{\prime},L)$ in Definition \ref{def_Mis1}), we have%
\[
L_{\triangle^{\prime}}\subseteq\mathcal{O}_{\triangle}%
\]
and this defines a Tate lattice in the $n$-Tate object $\mathcal{O}%
_{\triangle}$. Hence, there is a mechanism to associate Tate lattices to
Beilinson lattices. Suggestively, albeit somewhat vaguely, we could write%
\begin{equation}
\text{Beilinson lattices}\qquad\rightsquigarrow\qquad\text{Tate lattices.}%
\label{lMis2}%
\end{equation}
Moreover, the $\mathcal{O}_{\eta_{1}}$-module structure of the Beilinson
lattice induces%
\begin{align}
\mathcal{O}_{\eta_{1}}\otimes L  & \longrightarrow L\label{ltc1}\\
(\mathcal{O}_{\eta_{1}})_{\triangle^{\prime}}\otimes L_{\triangle^{\prime}}  &
\longrightarrow L_{\triangle^{\prime}}\text{.}\nonumber
\end{align}
Since the maximal ideals of $\mathcal{O}_{\triangle^{\prime}}$ lie over
$\eta_{1}$, we have $(\mathcal{O}_{\eta_{1}})_{\triangle^{\prime}}%
=\mathcal{O}_{\triangle^{\prime}}$. This makes $L_{\triangle^{\prime}}$ a
finitely generated $\mathcal{O}_{\triangle^{\prime}}$-module. By Theorem
\ref{TATE_StructureOfLocalAdelesProp} the normalization $(-)^{\prime}$ of
$\mathcal{O}_{\triangle^{\prime}}$ satisfies $(\mathcal{O}_{\triangle^{\prime
}})^{\prime}=\prod\mathcal{O}_{i}\subseteq\prod K_{i}=\mathcal{O}_{\triangle}%
$. Let $e$ be the idempotent cutting out $K_{i}$ from $\mathcal{O}_{\triangle
}$, and then also $\mathcal{O}_{i}$ from $(\mathcal{O}_{\triangle^{\prime}%
})^{\prime}$. Inside $\mathcal{O}_{\triangle}$, we can take $\mathcal{O}%
_{\triangle}$-spans of elements; in particular, $e(\mathcal{O}_{i}\cdot
L_{\triangle^{\prime}})$ defines a finitely generated $\mathcal{O}_{i}%
$-submodule of $K_{i}$. As $L$ was a Beilinson lattice, we have%
\[
\mathcal{O}_{\eta_{0}}\cdot L=\mathcal{O}_{\eta_{0}}%
\]
and as in Equation \ref{ltc1} this implies%
\[
(\mathcal{O}_{\eta_{0}})_{\triangle^{\prime}}\cdot L_{\triangle^{\prime}%
}=(\mathcal{O}_{\eta_{0}})_{\triangle^{\prime}}\text{,}%
\]
but the maximal ideals of the $(\mathcal{O}_{\eta_{1}})_{\triangle^{\prime}}%
$-module structure of $L_{\triangle^{\prime}}$ all lie over $\eta_{1}$, so as
the localization at $\eta_{0}$ inverts this, it follows that $(\mathcal{O}%
_{\eta_{0}})_{\triangle^{\prime}}=\mathcal{O}_{\triangle}$ and $(\mathcal{O}%
_{\eta_{0}})_{\triangle^{\prime}}=\mathcal{O}_{\triangle}$. Thus,
$\mathcal{O}_{\triangle}\cdot L_{\triangle^{\prime}}=\mathcal{O}_{\triangle}$
and therefore%
\[
e\mathcal{O}_{\triangle}=e(\mathcal{O}_{\triangle}\cdot L_{\triangle^{\prime}%
})\subseteq e(\mathcal{O}_{i}\cdot L_{\triangle^{\prime}})\subseteq
e\mathcal{O}_{\triangle}\text{.}%
\]
It follows that $\mathcal{O}_{i}\cdot L_{\triangle^{\prime}}\subseteq K_{i}$
is a Yekutieli lattice. It is not hard to show that such Yekutieli lattices
again define Tate lattices in $K_{i}$, using a similar argument as around
Equation \ref{lMis1}. This yields a further mechanism to produce Tate
lattices, this time yielding%
\[
\text{Beilinson lattices}\qquad\rightsquigarrow\qquad\text{Yekutieli
lattices}\qquad\rightsquigarrow\qquad\text{Tate lattices.}%
\]
Note that this is a different mechanism as in line \ref{lMis2}. We ask: Does
every\ Tate lattice arise this way?\medskip

It does not, and it is indeed very easy to find examples. For example, if $L$
is a Beilinson lattice, it is by definition an $\mathcal{O}_{\eta_{1}}%
$-module. As a result,%
\[
\mathcal{O}_{\triangle^{\prime}}\otimes L_{\triangle^{\prime}}\longrightarrow
L_{\triangle^{\prime}}%
\]
defines an $\mathcal{O}_{\triangle^{\prime}}$-module structure on
$L_{\triangle^{\prime}}$. But Tate lattices have no reason to carry any module
structure at all. For example, let $x_{1},\ldots,x_{r}$ an arbitrary family of
elements in $\mathcal{O}_{\triangle}$, some `noise'. Then if $L\subseteq
\mathcal{O}_{\triangle}$ is a Tate lattice, so is $L+k\otimes\{x_{1}%
,\ldots,x_{r}\}\subseteq\mathcal{O}_{\triangle}$ (if $R$ is a ring and $M$ an
$R$-module, we write $R\otimes\{v_{1},\ldots\}$ to denote the $R$-submodule of
$M$ which is spanned by elements $v_{1},\ldots$). This is true for the simple
reason that adding or quotienting out some finite-dimensional vector space
will not affect being a Pro- or Ind-object inside $\mathsf{Tate}%
^{el}(\mathsf{Vect}_{f})$. This shows that a general Tate lattice need not
come from a Beilinson or Yekutieli lattice. The rest of this section will be
devoted to discussing a more sophisticated example, where a Tate and Yekutieli
lattice does carry (the natural!) module structure, but still does not come
from a Beilinson lattice.\medskip

Consider the affine $2$-space $\mathbf{A}^{2}=\operatorname*{Spec}k[s,t]$ and
the singleton flag $\triangle:=\{((0)>(s^{2}-t^{3})>(s,t))\}$. For the sake of
brevity, we employ the shorthand%
\[
A_{j}:=A(\eta_{j}>\cdots>\eta_{2},\mathcal{O}_{\mathbf{A}^{2}})\in
\mathsf{Rings}\text{,}%
\]
(we had already used this notation earlier; cf. Definition
\ref{def_AjRingsShorthand}) and we regard these only as commutative rings for
the moment. We compute%
\begin{align*}
A_{2}  & =k[[s,t]]\\
A_{1}  & =\underset{j}{\underleftarrow{\lim}}\,k[[s,t]]\left[  \left(
k[s,t]_{(s,t)}-(s^{2}-t^{3})\right)  ^{-1}\right]  /(s^{2}-t^{3})^{j}\text{.}%
\end{align*}
To understand $A_{1}$ as a ring, note that $k[[s,t]]$ is a $2$-dimensional
regular local domain. Already inverting only $t$ removes the maximal ideal, so
that $k[[s,t]][t^{-1}]$ is a $1$-dimensional regular domain $-$ since
$k[[s,t]]$ is regular, it is factorial, and so all height one primes are
principal. Therefore, $k[[s,t]][t^{-1}]$ is actually a principal ideal domain.
Hence, $k[[s,t]]\left[  \left(  k[s,t]_{(s,t)}-(s^{2}-t^{3})\right)
^{-1}\right]  $ is a localization thereof, and thus itself a principal ideal
domain. The ideal $(s^{2}-t^{3})$ is then necessarily maximal, thus completing
at this ideal yields a regular complete local ring of dimension one, i.e. a
discrete valuation ring. Hence, by Cohen's Structure Theorem (cf. Proposition
\ref{Prop_CohenStructureTheorem}) there exists an isomorphism $A_{1}%
\simeq\varkappa\lbrack\lbrack w]]$ with%
\[
\varkappa:=A_{1}/(s^{2}-t^{3})=k[[s,t]]/(s^{2}-t^{3})\left[  \overline{\left(
k[s,t]_{(s,t)}-(s^{2}-t^{3})\right)  ^{-1}}\right]  \text{,}%
\]
where the overline denotes that we refer to the images of these elements after
taking the quotient by $(s^{2}-t^{3})$. Thus, $\varkappa=\operatorname*{Frac}%
k[[s,t]]/(s^{2}-t^{3})$. Next,%
\[
A_{0}=\underset{j}{\underleftarrow{\lim}}\,A_{1}\left[  \left(  k[s,t]_{(s^{2}%
-t^{3})}-(0)\right)  ^{-1}\right]  /(0)^{j}\text{,}%
\]
so this is just the field of fractions of $A_{1}$. We therefore could draw a
diagram (except for the $k[[u]]$ entry, which will be constructed only below)%
\[%
\begin{array}
[c]{ccccc}%
A_{0} &  &  &  & \\
\uparrow &  &  &  & \\
A_{1} & \longrightarrow & A_{1}/(s^{2}-t^{3}) &  & \\
&  & \uparrow &  & \\
&  & k[[u]] &  & \\
&  & \uparrow &  & \\
&  & A_{2}/(s^{2}-t^{3}) & \longrightarrow & A_{2}/(s,t)\text{.}%
\end{array}
\]
The upper-right diagonal entries are fields, the lower-left diagonal entries
are one-dimensional local domains, the upward arrows are localizations, and
the rightward arrows quotients by the the respective maximal ideals. Note that
$A_{2}/(s^{2}-t^{3})\simeq k[[s,t]]/(s^{2}-t^{3})$ is the completed local ring
of the standard cusp singularity. In particular, it is not a normal ring. The
well-known integral closure inside the field of fractions is $k[[u]]$ via the
inclusion $t\mapsto u^{2}$, $s\mapsto u^{3}$. In particular, $\varkappa
:=A_{1}/(s^{2}-t^{3})\simeq k((u))$ since $\frac{s}{t}=\frac{u^{3}}{u^{2}}=u$
and $t$ is already a unit in $A_{1}$ as we had discussed above. In particular,
after these isomorphisms we may rephrase the previous diagram in the shape%
\[%
\begin{array}
[c]{ccccc}%
k((u))((w)) &  &  &  & \\
\uparrow &  &  &  & \\
k((u))[[w]] & \longrightarrow & k((u)) &  & \\
&  & \uparrow &  & \\
&  & k[[u]] &  & \\
&  & \uparrow &  & \\
&  & k[[s,t]]/(s^{2}-t^{3}) & \longrightarrow & k\text{.}%
\end{array}
\]
If we follow Beilinson's definition of a lattice, Definition
\ref{Definition_AdeleOperatorIdeals}, the lattices in $\mathcal{O}%
_{(0)}=k(s,t)$ are finitely generated $k[s,t]_{(s^{2}-t^{3})}$-submodules
$L\subseteq k(s,t) $ so that $k(s,t)\cdot L=k(s,t)$. A\ quotient of such, say
$L_{1}^{\prime}\subseteq L_{1}$, would be, for example,%
\[
\frac{L_{1}}{L_{1}^{\prime}}=\frac{k[s,t]_{(s^{2}-t^{3})}}{(s^{2}-t^{3}%
)^{N}\cdot k[s,t]_{(s^{2}-t^{3})}}\qquad\left(  \frac{L_{1}}{L_{1}^{\prime}%
}\right)  _{\triangle^{\prime}}=\frac{k((u))[[w]]}{w^{N}\cdot k((u))[[w]]}%
\text{,}%
\]
where $\triangle^{\prime}=((s^{2}-t^{3})>(s,t))$ and $N\geq0$ some integer.
Now, the Beilinson lattices inside $L_{1}/L_{1}^{\prime}$ are $k[s,t]_{(s,t)}%
$-modules, for example,%
\begin{align*}
\left(  t^{p}\cdot k[s,t]_{(s,t)}\right)  _{\triangle^{\prime\prime}}%
\equiv(u^{2p}\cdot k[u^{2},u^{3}]_{(u)})_{\triangle^{\prime\prime}}\equiv
u^{2p}\cdot k[[u,w]]  & \subset\left(  \frac{L_{1}}{L_{1}^{\prime}}\right)
_{\triangle^{\prime}}\\
(s^{p}\cdot k[s,t]_{(s,t)})_{\triangle^{\prime\prime}}\equiv(u^{3p}\cdot
k[u^{2},u^{3}]_{(u)})_{\triangle^{\prime\prime}}\equiv u^{3p}\cdot k[[u,w]]  &
\subset\left(  \frac{L_{1}}{L_{1}^{\prime}}\right)  _{\triangle^{\prime}%
}\text{.}%
\end{align*}
Here the symbol \textquotedblleft$\equiv$\textquotedblright\ really just means
equality, but is chosen to stress that we are working in the quotient $\left(
L_{1}/L_{1}^{\prime}\right)  _{\triangle^{\prime}}=L_{1\triangle^{\prime}%
}/L_{1\triangle^{\prime}}^{\prime}$.

Any Beilinson lattice $\mathcal{L}\subseteq L_{1}/L_{1}^{\prime}$ is generated
by polynomials in the variables $s,t$, and thus after applying $(-)_{\triangle
^{\prime}}$ is generated from elements of the shape $\sum_{i,j\geq0}%
a_{ij}u^{2i+3j}$ only. So we see that for $N=1$, there exists no Beilinson
lattice $\mathcal{L}\subseteq L_{1}/L_{1}^{\prime}$ so that $\mathcal{L}%
_{\triangle^{\prime\prime}}\equiv u\cdot k[[u,w]]\equiv u\cdot
k[[u]]\left\langle 1,w,\ldots,w^{N-1}\right\rangle $ (these agree in $\left(
L_{1}/L_{1}^{\prime}\right)  _{\triangle^{\prime}}$ since $w^{N}\equiv0$;
again writing \textquotedblleft$\equiv$\textquotedblright\ instead of equality
is meant to emphasize this notationally). In particular, $u\cdot k[[u,w]]$ is
a Tate lattice, an $(\mathcal{O}_{\mathbf{A}^{2}})_{\triangle^{\prime\prime}}%
$-module, yet cannot be of the shape $\mathcal{L}_{\triangle^{\prime\prime}}$
for a Beilinson lattice.

In summary, we have inequalities%
\[
\text{Beilinson lattices\quad}\neq\text{\quad Yekutieli lattices\quad}%
\neq\text{\quad Tate lattices,}%
\]
with a slight abuse of language since they each live in different categories
and objects.

\appendix

\section{\label{section_Appendix_ResultsFromComAlg}Results from commutative
algebra}

For the convenience of the reader, we list various facts from commutative
algebra which we need in various proofs:

\begin{fact}
Suppose $R$ is a Noetherian ring.\label{comalg_lemma_reminder}

\begin{enumerate}
\item \label{comalg_fact_D1}(\cite[Thm. 7.2 (a)]{MR1322960}) For an ideal $I$,
and a finitely generated $R$-module $M$, $\widehat{M_{I}}\cong M\otimes
_{R}\widehat{R_{I}}$.

\item \label{comalg_fact_D2}(\cite[Lemma 2.4]{MR1322960}) For a multiplicative
set $S$ and $M$ an $R$-module, we have $M[S^{-1}]\cong M\otimes_{R}R[S^{-1}]$.

\item \label{comalg_fact_D3}(\cite[Prop. 10.15 (iv)]{MR0242802}) For an ideal
$I$ and $\widehat{R_{I}}$ the $I$-adic completion. Then $I\widehat{R_{I}}$ is
contained in the Jacobson radical of $\widehat{R_{I}}$.

\item \label{comalg_fact_D4}(\cite[Cor. 2.16]{MR1322960}) Every Artinian ring
$R$ is isomorphic to $\prod R_{P}$ (i.e. a product of Artinian local rings),
where $P$ runs through the finitely many maximal primes of $R$.

\item \label{comalg_fact_D6}(\cite[Thm. 7.2(3)]{MR1011461}) Let $R$ be a ring
and $M$ an $R$-module. Then $M$ is faithfully flat over $R$ iff $M\neq
\mathfrak{m}M$ for every maximal ideal of $R$.\newline(\cite[Thm.
7.5(ii)]{MR1011461}) If $M$ is a faithfully flat $R$-algebra and $I$ an ideal
of $R$, $IM\cap R=I$.

\item \label{comalg_fact_D7}A reduced Artinian local ring is a field (for an
Artinian ring the maximal ideal is nilpotent, so if there are no non-trivial
nilpotent elements, we must have $\mathfrak{m}=0$).

\item \label{comalg_fact_D8}(\cite[Cor. 7.5]{MR1322960}) (Lifting of
Idempotents) Suppose $R$ is a Noetherian ring which is complete with respect
to an ideal $I$. Then any system of pairwise orthogonal idempotents
$\overline{e_{1}},\ldots,\overline{e_{r}}\in R/I$ lifts uniquely to pairwise
orthogonal idempotents $e_{1},\ldots,e_{r}\in R$.

\item \label{comalg_fact_D9}(\cite[Cor. 2.1.13]{MR2266432}) Suppose $R$ is a
reduced ring, $Q_{1},\ldots,Q_{r}$ its minimal primes, and $R^{\prime}$ the
integral closure in its total ring of quotients $\operatorname*{Quot}(R)$.
Then $R^{\prime}\cong\prod_{i=1}^{r}(R/Q_{i})^{\prime}$, where $(R/Q_{i}%
)^{\prime}$ denotes the integral closure in the field of fractions of
$R/Q_{i}$.

\item \label{comalg_fact_D10}(\cite[Thm. 21.10]{MR1125071}) Let $R$ be a ring
and $e$ an idempotent. Then $\operatorname*{rad}(eR)=e\operatorname*{rad}R$,
where $\operatorname*{rad}R$ denotes the Jacobson radical of $R$. Thus,
$eR/e\operatorname*{rad}R\cong\overline{e}(R/\operatorname*{rad}R)$, where
$\overline{e}$ denotes the image of $e$ in $R/\operatorname*{rad}R$.

\item \label{comalg_fact_D11}Suppose $R\rightarrow S$ is a faithfully flat
morphism. Let $P$ be a prime in $R$. Then $P$ is of the shape $Q\cap S$ for a
prime ideal $Q$ in $S$ minimal over $PS$. Conversely, for every prime ideal
$Q$ minimal over $PS$ we have $Q\cap S=P$.

\item \label{comalg_fact_D12}Let $(R,\mathfrak{m})$ be a Noetherian complete
local ring and $R\rightarrow S$ a finite extension. Then $S$ is semi-local and
decomposes as a finite product of complete local rings, $S\cong\prod
\widehat{S_{\mathfrak{m}^{\prime}}}$, where $\mathfrak{m}^{\prime}$ runs
through the finitely many maximal ideals of $S$.

\item \label{comalg_fact_D14}(\cite[Prop. 11.1]{MR1322960}) Suppose
$(R,\mathfrak{m})$ is a $1$-dimensional regular local ring. Then it is a
discrete valuation ring.

\item \label{comalg_fact_D15}Let $R$ be a reduced excellent ring and $I$ an
ideal. Then $\widehat{R_{I}}$ is also reduced.

\item \label{comalg_fact_D16}(\cite[33.I, Thm. 79]{MR575344}) Let $R$ be an
excellent ring, $I$ an ideal. Then the canonical map $R\rightarrow
\widehat{R_{I}}$ is regular.

\item \label{comalg_fact_D17}(\cite[33.B, Lemma 2]{MR575344}) Let
$R\rightarrow S$ be a regular, faithfully flat ring homomorphism. Then $R$ is
reduced iff $S$ is reduced.

\item \label{comalg_fact_D18}(\cite[Thm. 6.5]{MR0241408}) Let $R$ be a reduced
Noetherian local ring with geometrically regular formal fibers (e.g. an
excellent reduced local ring). Then there is a canonical bijection between the
maximal ideals of the normalization $R^{\prime}$ and the minimal primes of the
completion $\widehat{R}$.
\end{enumerate}
\end{fact}

\begin{proof}
All given references also give a full proof. As additional remarks: For
\ref{comalg_fact_D12} we refer to \cite[Prop. 4.3.2]{MR2266432}: Use that
$S/\mathfrak{m}S$ has finitely many minimal primes $\mathfrak{m}^{\prime}$ and
therefore by the Chinese Remainder Theorem $S/\mathfrak{m}S\cong
\prod(S/\mathfrak{m}S)_{\mathfrak{m}^{\prime}}$. Then use lifting of
idempotents. For \ref{comalg_fact_D15} just combine \ref{comalg_fact_D16} with
\ref{comalg_fact_D17} and the faithful flatness of completion.
\end{proof}

\bibliographystyle{amsalpha}
\bibliography{ollinewbib}

\end{document}